\documentclass[preprint]{elsarticle}

\usepackage{lineno,hyperref}
\modulolinenumbers[5]

\journal{}

% Packages
\usepackage{lipsum}
\usepackage{amsmath,amsfonts,amsopn}
\usepackage{graphicx}
\usepackage{epstopdf}
\usepackage{algorithm,algorithmic}
\usepackage{setspace}
\usepackage{rotating}
\usepackage{comment}
\usepackage{hyperref}
\usepackage{xcolor}
\usepackage{fancyhdr}
%\usepackage{showkeys}

%%%
%\usepackage[nomarkers]{endfloat}
%%%

% Macros and environments
\DeclareMathOperator{\diag}{diag}
\DeclareMathOperator*{\minimize}{minimize}
\def\st{\mbox{s.t.}}
\newcommand{\N}{\mathbb{N}}
\newcommand{\R}{\mathbb{R}}
\def\bb{\mathbf b}

\def\bd{\mathbf d}
\def\be{\mathbf e}
\def\bu{\mathbf u}
\def\bv{\mathbf v}

\def\bx{\mathbf x}
\def\by{\mathbf y}
\def\bz{\mathbf z}
\def\b0{\mathbf 0}

\def\mD{\mathcal{D}}
\def\mN{\mathcal{N}}
\def\mS{\mathcal{S}}
\newtheorem{theorem}{Theorem}
\newtheorem{lemma}[theorem]{Lemma}
\newenvironment{proof}{\vspace*{-5mm}\paragraph{\textnormal{\emph{Proof}}}}{\hfill$\square$ \par\medskip\smallskip}

% Figures: file extensions and path
\ifpdf
  \DeclareGraphicsExtensions{.eps,.pdf,.png,.jpg}
\else
  \DeclareGraphicsExtensions{.eps}
\fi
\graphicspath{./figures/}

% Prevent itemized lists from running into the left margin inside theorems and proofs
\usepackage{enumitem}
\setlist[enumerate]{leftmargin=.5in}
\setlist[itemize]{leftmargin=.5in}

%% Optional PDF information
%\ifpdf
%\hypersetup{
%  pdftitle={ACQUIRE: an inexact iteratively reweighted norm approach for TV-based Poisson image restoration},
%  pdfauthor={D. di Serafino, G. Landi, and M. Viola}
%}
%\fi

%% Running headers
%\pagestyle{fancy}
%\fancyhf{}
%\fancyhead[LE]{\thepage \hfill D. di Serafino, G. Landi, and M. Viola}
%\fancyhead[RO]{ACQUIRE: an inexact IRN approach for TV-based Poisson image restoration \hfill \thepage}
%\Date
\date{July 15, 2019}

\begin{document}
	
\begin{frontmatter}

% Title
\title{ACQUIRE: an inexact iteratively reweighted norm approach for TV-based Poisson image restoration\tnoteref{t1}}

\tnotetext[t1]{This work
	was partially supported by Gruppo Nazionale per il Calcolo Scientifico - Istituto Nazionale di Alta Matematica (GNCS-INdAM).}

% Authors: full names plus addresses.
\author[1]{Daniela di Serafino\corref{cor1}}\ead{daniela.diserafino@unicampania.it}
\author[2]{Germana Landi}\ead{germana.landi@unibo.it}
\author[3]{Marco Viola}\ead{marco.viola@uniroma1.it}

\address[1]{Dipartimento di Matematica e Fisica, Universit\`a degli Studi della Campania \\ ``Luigi Vanvitelli'', viale A. Lincoln~5, 81100 Caserta, Italy}
\address[2]{Dipartimento di Matematica, Universit\`a degli Studi di Bologna, Piazza di Porta S.~Donato~5, 40126 Bologna, Italy}
\address[3]{Dipartimento di Ingegneria Informatica, Automatica e Gestionale ``Antonio Ruberti'', Sapienza Universit\`a di Roma, via Ariosto~25, 00185 Roma, Italy}

\begin{abstract}
We propose a method, called ACQUIRE, for the solution of constrained optimization problems modeling the restoration of images
corrupted by Poisson noise. The objective function is the sum of a generalized Kullback-Leibler divergence term and a TV regularizer,
subject to nonnegativity and possibly other constraints, such as flux conservation. ACQUIRE is a line-search method that considers
a smoothed version of TV, based on a Huber-like function, and computes the search directions by minimizing
quadratic approximations of the problem, built by exploiting some second-order information. A~classical second-order Taylor
approximation is used for the Kullback-Leibler term and an iteratively reweighted norm approach for the smoothed TV term.
We prove that the sequence generated by the method has a subsequence converging to a minimizer of the smoothed problem
and any limit point is a minimizer. Furthermore, if the problem is strictly convex, the whole sequence is convergent. We note
that convergence is achieved without requiring the exact minimization of the quadratic subproblems; low accuracy in this
minimization can be used in practice, as shown by numerical results. Experiments on reference test problems show that our method
is competitive with well-established methods for TV-based Poisson image restoration, in terms of both computational
efficiency and image quality.
\end{abstract}
\begin{keyword}
image restoration, Poisson noise, TV regularization, iteratively reweighted norm approach, quadratic approximation. \\
\end{keyword}
%	AMS subject classifications.}   90C25, 65K05, 94A08.
\end{frontmatter}

\section{Introduction\label{sec:intro}}

Restoring images corrupted by Poisson noise is required in many applications, such as fluorescence microscopy~\cite{SarderNehorai2006},
X-ray computed tomography (CT) \cite{Herman2009}, positron emission tomography (PET)~\cite{SheppVardi1982},
confocal microscopy~\cite{Pawley1996} and astronomical imaging~\cite{StarckMurtagh2006,BerteroBoccacciDesideraVicidomini2009}.
Thus, this is a very active research area in image processing. We consider a discrete formulation of the problem, where the object
to be restored is represented by a vector
$\bx \in \R^n$ and the measured data are assumed to be a vector $\by \in \N_0^m$, whose entries $y_j$
are samples from $m$ independent Poisson random variables $Y_j$ with probability
$$
    P(Y_j = y_j) = \frac{e^{-(A \bx + \bb)_j}(A\bx + \bb)_ {j}^{y_j}}{y_j!},
$$
where the matrix $A = (a_{ij}) \in \R^{m \times n}$ models the observation mechanism of the imaging system
and $\bb \in \R^{m}$,  $\bb > 0$, models the background radiation detected by the sensors.
Standard assumptions on $A$ are
\begin{equation}
\label{eq:matrixa}
     a_{ij} \geq 0 \mbox{ for all }  i, j, \qquad \sum_{i=1}^m a_{ij} = 1 \mbox{ for all } j.
\end{equation}
By applying a maximum-likelihood approach \cite{BerteroBoccacciDesideraVicidomini2009,SheppVardi1982},
we can estimate $\bx$ by minimizing the Kullback-Leibler (KL) divergence of $A \bx+\bb$ from $\by$:
\begin{equation} \label{eq:kl}
    D_{KL}(A \bx + \bb, \by) = \sum_{j=1}^m \left( y_j \ln \frac{y_j}{(A \bx+\bb)_j} + (A \bx+\bb)_j - y_j \right),
\end{equation}
where we set $y_j \ln (y_j / (A \bx+\bb)_j) = 0 $ if $y_j = 0$.
A regularization term is usually added to~\eqref{eq:kl} to deal with the inherent ill-conditioning of the estimation problem.
We focus on edge-preserving regularization by Total Variation (TV)~\cite{RudinOsherFatemi1992},
which has received considerable attention because of
its ability of preserving edges and smoothing flat areas of the images. We note that, although TV regularization
is known to suffer from undesirable staircase artifacts, it is still widely used in many medical and biological applications (see, e.g., \cite{BarnardBilheuxToops2018,MotaMatelaOliveraAlmeida2016,ZhangHuNagy2018}, \url{http://ranger.uta.edu/~huang/R_CSMRI.htm}).
Furthermore, by focusing on TV-regularized problems, we introduce and test a novel solution method that allows for extensions
to other models, such as high-order TV~\cite{LiuHuangLvWang2017,PapafitsorosSchonlieb2014} and Total Generalized
Variation~\cite{BrediesHoller2014,BrediesKunischPock2010}, proposed to reduce the staircase effect.

Assuming, for simplicity, that $\bx$ is obtained by stacking the columns of a 2D image $X = (X_{k,l}) \in \R^{r \times s}$, i.e.,
$x_i = X_{k,l}$ with $i = (l-1)r + k$ and $n=rs$, the following discrete version of the TV functional can be defined~\cite{Chambolle2004}:
$$
    TV(X) = \sum_{k=1}^{r}\sum_{l=1}^{s}\sqrt{(X_{k+1,l}-X_{k,l})^2 + (X_{k,l+1}-X_{k,l})^2},
$$
where $X$ is supposed to satisfy some boundary conditions, e.g., periodic. This can be also
written as
\begin{equation}
\label{eq:tv}
    TV(\bx) = \sum_{i=1}^{n} \| D_i \bx \|,
\end{equation}
where
$$
 D_i =
 \left( \begin{array}{c}
   \be_{(l-1)r + k + 1}^T - \be_{(l-1)r + k}^T \\
   \be_{lr+k}^T - \be_{(l-1)r + k}^T
 \end{array} \right),
 \quad
i = (l-1)r + k,
$$
$\be_q \in \R^{n}$ is the $q$th standard basis vector, and $\| \cdot \|$ is the 2-norm
%(obvious changes must be done when $i$ corresponds to a boundary point).
%With a suitable generalization of $D_i$, expression~\eqref{eq:tv} also describes the total variation
%of a 3D image and, more generally, of any multi-index array represented by $\bx$.
%
% $$
%  \left( \begin{array}{c}
%  \Delta_1 X_{jk} \\
%  \Delta_2 X_{jk}
%  \end{array} \right)
%   =
%   \left( \begin{array}{c}
%   x_{(k-1)r + j+1} - x_{(k-1)r + j} \\
%   x_{kr + j} - x_{(k-1)r + j}
%  \end{array} \right)
% $$
%
%

Thus, we are interested in solving the following problem:
\begin{equation}
\label{eq:poispb}
\begin{array}{ll}
\minimize & \displaystyle D_{KL}(\bx)  + \lambda \, TV(\bx), \\
\st            & \bx \in \mS,
\end{array}
\end{equation}
where $D_{KL}(\bx)$ is a shorthand for $D_{KL} (A \bx + \bb, \by)$, $\lambda > 0$
is a regularization parameter, and $\bx \in \mS$ corresponds to some physical constraints.
The nonnegativity of the image intensity naturally leads to the constraint $\bx \ge \b0$.
When the matrix $A$ comes from the discretization of a convolution operator and it is normalized as
in~\eqref{eq:matrixa}, the constraint $\sum_{i=1}^{n} x_i = \sum_{j=1}^{m} (y_i-b_i)$
can be added, since the convolution performs a modification of the intensity distribution, while the total intensity remains
constants~\cite{BerteroLanteriZanni2008}.\footnote{We have implicitly assumed that $\by$ has been converted into
a real vector with entries ranging in the same interval as the entries of $\bx$.}
In other words, common choices of $\mS$ are
\begin{equation}
\label{eq:constr1}
   \mS = \mS_1 := \{ \bx \in \R^n : \bx \ge \b0 \}
\end{equation}
or
\begin{equation}
\label{eq:constr2}
   \mS = \mS_2 :=  \{ \bx \in \R^n : \bx \ge \b0, \, \be^T \bx =  \overline\be^T (\by - \bb) \},
\end{equation}
where $\be$ and $\overline\be$ denote the vectors of all 1's of sizes $n$ and $m$, respectively.

Various approaches have been proposed to solve problem~\eqref{eq:poispb}, mostly with $\mS = \mS_1$;
a key issue in all cases is to deal with the nondifferentiability of the TV functional.
Some representative methods are listed next.
A classical approach consists in approximating the TV functional with a smooth version of it
and using well-established techniques such as expectation-maximization methods
\cite{JohnssonHuangChan1998,PaninZengGullberg1999},
gradient-projection methods with suitable scaling techniques aimed at accelerating convergence
\cite{BonettiniZanellaZanni2009,PiccolominiColiMorottiZanni2017,ZanellaBoccacciZanniBertero2009},
and alternating linearized minimization methods~\cite{LiMalgouyresZeng2017}.

The approximation of TV can be avoided, e.g., by using forward-backward splitting techniques;
this is the case of the proximal-gradient methods proposed in \cite{BonettiniLorisPortaPrato2016,HarmanyMarciaWillet2012}
and the forward-backward EM method discussed in~\cite{SawatzkyBruneWuebbelingKoestersSchaefersBurger2008}.
On the other hand, the previous methods require, at each step, the solution of a Rudin-Osher-Fatemi (ROF) denoising
subproblem \cite{RudinOsherFatemi1992}, which can be computed only approximately, using, e.g., the algorithms
proposed in \cite{BeckTeboulle2009,Chambolle2004}.
Methods based on ADMM and SPLIT BREGMAN techniques, such as those presented in
\cite{FigueiredoBioucasDias2010,Getreuer2012,SetzerSteidlTeuber2010}, do not exploit smooth TV
approximations too. They generally use more memory because of auxiliary variables of the same size
as $\bx$ or $\by$, and require the solution of linear systems involving $A^TA$ and, possibly,
the solution of ROF subproblems. Finally, a different approach to avoid the difficulties associated with
the nondifferentiability of the TV functional is based on the idea of reformulating~\eqref{eq:poispb} as
a saddle-point problem and solving it by a primal-dual algorithm. In this context, an alternating extragradient
scheme has been presented in \cite{BonettiniRuggiero2011}, and a procedure exploiting the
Chambolle-Pock algorithm \cite{ChambollePock2011} has been described in \cite{WenChanZeng2016}.
%
%{\color{red}
%Diciamo qualcosa qui su TV regolarizzata vs TV non regolarizzata?
%Dal paper di Weiss, Blanc-Feraud et al:``In [42], the author shows that the singularity at 0 of the l1-norm is responsible for
%the so-called staircase effect: the solutions of total variation problems are -- roughly speaking -- piecewise constant.".
%D'altra parte, in Sawatzky et al. si parla di effetti di blurring a causa dell'approssimazione della TV, ma si dice anche
%che ``The expected reconstructions are cartoon-like images, i.e. they will result in almost uniform (mean) tracer activities
%inside the different structures. Of course, such an approach cannot be used for studying activity inside the structures
%(which is anyway unrealistic given the low SNR), but it will be perfect for segmenting the different structures, e.g. inside
%the human heart.".
%}

In this paper we take a different approach, aimed at exploiting some second-order information not considered
by the aforementioned methods. We consider a smoothed version of TV, based on a Huber-like function,
and propose a line-search method, called ACQUIRE (Algorithm based on Consecutive QUadratic and Iteratively
REweighted norm approximations), which minimizes a sequence of quadratic models obtained by
a second-order Taylor approximation of the KL divergence and an iteratively reweighted norm (IRN) approximation of the smoothed TV.
We prove the convergence of ACQUIRE with inexact solution of the inner quadratic problems. We show by numerical
experiments that exploiting some second-order information can lead to fast image restorations even with
low accuracy requirements on the solution of the inner problems, without affecting the quality of the reconstructed images.
In particular, ACQUIRE generally produces a strong reduction of the reconstruction error in the first iterations,
thus achieving a good tradeoff between accuracy and efficiency, and resulting competitive with
state-of-the-art methods.

The remainder of this paper is organized as follows. In Section~\ref{sec:preliminaries} we recall some preliminary
concepts that will be exploited later. In Section~\ref{sec:algorithm} we describe our method and in Section~\ref{sec:convergence}
we prove that it is well posed and convergent. We provide implementation details and discuss the results obtained
by applying the proposed method to several test problems in Section~\ref{sec:experiments}. Some conclusions are reported in
Section~\ref{sec:conclusions}.

\section{Preliminaries\label{sec:preliminaries}}

We first provide some useful details about the KL divergence and introduce a smooth version of the TV functional.
Then we recall the concept of projected gradient and its basic properties, exploited later in this work.

Assumptions~\eqref{eq:matrixa} and $\bb > 0$ ensure that, for any given $\by \ge 0$, $D_{KL}$ is a nonnegative,
convex, coercive, twice continuosly differentiable function in $\R_+^n$ (see, e.g.,
\cite{BerteroBoccacciTalentiZanellaZanni2010,FigueiredoBioucasDias2010}). Its gradient and Hessian
are given by
$$
    \nabla D_{KL}(\bx)= A^T \left( \be - \frac{\by}{A \bx + \bb} \right)
$$
and
\begin{equation}
     \label{eq:hesskl}
    \nabla^2 D_{KL}(\bx) = A^T U(\bx)^2 A, \quad U(\bx) = \diag \left( \frac{\sqrt{\by}}{A \bx + \bb} \right),
\end{equation}
where the square root and the ratios are intended componentwise, and $\diag (\bv)$ denotes the diagonal matrix with
diagonal entries equal to the entries of $\bv$. It can be proved that $\nabla D_{KL}$ is Lipschitz
continuous \cite{HarmanyMarciaWillet2012}; furthermore, it follows from~\eqref{eq:hesskl}
that $\nabla^2 D_{KL}$ is positive definite, i.e., $D_{KL}$ is strictly convex, whenever $\by > \b0$ and $A$ has nullspace
$\mN (A) = \{ \b0 \}$. In this case, if $\bx$ is constrained to be in a bounded subset of the nonnegative orthant, e.g.,
the set $\mS_2$ in~\eqref{eq:constr2}, the minimum eigenvalue of $\nabla^2 D_{KL}(\bx)$  is bounded below
independently of $\bx$, and $D_{KL}$ is strongly convex.

From a practical point of view, it is interesting to note that $A$ is usually the representation of a convolution operator, and hence
the computation of $\nabla D_{KL}$ or of matrix-vector products involving $ \nabla^2 D_{KL}$
can be performed efficiently via fast algorithms for discrete Fourier, cosine or sine transforms.

The  $TV$ functional is nonnegative, convex and continuous. Thus problem~\eqref{eq:poispb} admits
a solution, which is unique if  $\by > 0$ and $\mN (A) = \{ \b0 \}$. Since $TV$ is not differentiable,
we use a regularized version of it, $TV_\mu$. Taking into account the discussion
in~\cite{WeissBlancFeraudAubert2009} about smoothed versions of TV, we consider
$$
TV_\mu(\bx) = \sum_{i=1}^{n} \phi_\mu \left(\|D_i \bx \| \right),
$$
where $\phi_\mu$ is the Huber-like function
$$
	\phi_\mu(z) = \left\lbrace \begin{array}{ll}
	|z| & \mbox{if } |z| > \mu,\\
	\frac{1}{2}(\frac{z^2}{\mu}+\mu) & \mbox{otherwise},
	\end{array}  \right.
$$
and $\mu > 0$ is small. It is easy to verify that $TV_\mu$ is Lipschitz continuously differentiable and its gradient reads as follows:
$$
    \nabla TV_\mu (\bx) = \sum_{i=1}^{n} \nabla \phi_\mu \left(\|D_i \bx \| \right), \quad
    \nabla \phi_\mu \left( \|D_i \bx \| \right) = \left\{
    \begin{array}{ll}
    \displaystyle \frac{D_i^T D_i \bx}{\| D_i \bx \|} & \mbox{if } \|D_i \bx \| > \mu, \\[10pt]
    \displaystyle \frac{D_i^T D_i \bx}{\mu} & \mbox{otherwise}.
    \end{array} \right.
$$
We also observe that $TV_\mu$ is not twice continuously differentiable, but has continuous Hessian for all
$\bx$ such that $\|D_i \bx \| \ne \mu$:
$$
    \nabla^2 TV_\mu (\bx) = \sum_{i=1}^{n} \nabla^2 \phi_\mu \left(\|D_i \bx \| \right),
$$
\begin{equation}
\label{eq:hestvmu}
    \nabla^2 \phi_\mu \left( \|D_i \bx \| \right) =  \left\{
    \begin{array}{ll}
    \displaystyle \frac{D_i^T D_i}{\| D_i \bx \|} - \frac{(D_i^T D_i \bx) (D_i^T D_i \bx)^T}{\|D_i \bx \|^3} & \mbox{if } \|D_i \bx \| > \mu , \\[10pt]
    \displaystyle \frac{D_i^T D_i}{\mu} & \mbox{if } \|D_i \bx \| < \mu .
    \end{array} \right .
\end{equation}
\par

Now we recall basic notions about the projected gradient.
Let $\mS$ be a nonempty, closed and convex set. For any continuously differentiable
function $f: \mD \subseteq \R^n \to \R$, with $\mD$ open set containing $\mS$, the projected gradient
of $f$ at $\bx \in \mS$ is defined as the orthogonal projection of $-\nabla f$ onto the tangent cone
to $\mS$ at $\bx$, denoted by $T_\mS(\bx)$:
$$
   \nabla_\mS f (\bx) = \arg\min \left\{  \Vert \bv + \nabla f (\bx) \Vert
   \; \mbox{ s.t.} \; \bv \in T_\mS(\bx) \right\},
$$
When $\mS$ is the set $\mS_1$ defined in~\eqref{eq:constr1}, the tangent cone takes the form
$$
    T_\mS(\bx) = \left\{ \bv \in \R^n \, : \; v_i \geq 0 \mbox{ if } x_i  = 0 \right\}
$$
and the computation of $\nabla_\mS f (\bx)$ is straightforward; when $\mS$ is the set
$\mS_2$ in~\eqref{eq:constr2},
$$
     T_\mS (\bx) = \left\{ \bv \in \R^n \, : \; \be^T\bv=0 \mbox{ and } v_i \geq 0 \mbox{ if } x_i  = 0 \right\},
$$
and $\nabla_\mS f (\bx)$ can be efficiently determined too, thanks to
the availability of low-cost algorithms for computing the projection in this case (see, e.g.,
\cite{CalamaiMore1987a,Condat2016,DaiFletcher2006}).

Since the projection onto $\mS$ is nonexpansive, for all $\bx, \overline\bx \in \mS$ it is
$$
   \|  \nabla_\mS f (\bx) - \nabla_\mS f (\bar \bx) \| \le \| \bx - \overline\bx \|;
$$
furthermore,
\begin{equation}
\label{eq:grad_decomp}
    - \nabla f(\bx) = \nabla_\mS f(\bx) + P_{N_\mS(\bx)} (- \nabla f(\bx)),
\end{equation}
where $P_{N_\mS(\bx)}$ denotes the orthogonal projection operator onto the normal cone to $\mS$ at $\bx$,
$$
    N_\mS (\bx) = \left\{ \bv \in \R^n \, : \; \bv^T \bu \le 0 \mbox{ for all } \bu \in \mS(\bx) \right\},
$$
which is the polar cone of $T_\mS (\bx)$ (see, e.g., \cite[Lemma 2.2]{Zarantonello1971}).
% or \cite[section~2.5]{ParikBoyd2014}).

Finally, it is well known that any constrained stationary point $\bx^*$ of~$f$ in $\mS$ is characterized by
$\nabla_\mS f (\bx^*) = \b0$
%$$
%   \nabla_\mS f (\bx^*) = \b0
%$$
%if and only if $\bx^*$ is a stationary point of the problem
%$$
%   \begin{array}{ll}
%     \minimize & f (\bx), \\
%     \st             & \bx \in \mS,
%   \end{array}
%$$
and that $\|  \nabla_\mS f\|$ is lower semicontinuous on $\mS$ (see, e.g., \cite{CalamaiMore1987}).

\section{IRN-based inexact minimization method\label{sec:algorithm}}

We propose an iterative method for solving the problem
\begin{equation}
\label{eq:poispb2}
\begin{array}{ll}
\minimize & \displaystyle D_{KL}(\bx)  + \lambda \, TV_\mu (\bx), \\
\st            & \bx \in \mS,
\end{array}
\end{equation}
where $\mS$ can be any nonempty, closed and convex subset of $R_+^n$, although our practical interest
is for the feasible sets in~\eqref{eq:constr1}-\eqref{eq:constr2}.
This method is based on two main steps: the inexact solution of a quadratic model of~\eqref{eq:poispb}
and a line-search procedure.

Given an iterate $\bx^{(k)} \in \mS$, we consider the following quadratic approximation of $D_{KL}$:
\begin{equation}
\label{eq:quadkl}
\begin{array}{ll}
    \displaystyle
    D_{KL} (\bx)  \approx D_{KL}^{(k)}(\bx) & \displaystyle \!\!\!\! = D_{KL}(\bx^{(k)}) + (\bx-\bx^{(k)})^T \nabla D_{KL}(\bx^{(k)}) \\[5pt]
                                                                   & \displaystyle \!\!\!\! +  \; \frac{1}{2} (\bx-\bx^{(k)})^T ( \nabla^2 D_{KL} (\bx^{(k)}) + \gamma I ) (\bx-\bx^{(k)}),
\end{array}
\end{equation}
where $I$ is the identity matrix and $\gamma > 0$. Note that $\gamma I$ has been introduced to ensure that $D_{KL}^{(k)}$ is strongly convex;
obviously, we can set $\gamma = 0$ if $\by > \b0$, $\mN (A) = \{ \b0 \}$ and $\mS$ is bounded.

In order to build a quadratic model of $TV_\mu$, we use the IRN approach described in \cite{RodriguezWohlberg2009},
i.e., we approximate $TV_\mu(\bx)$ as follows:
$$
   TV_\mu(\bx) \approx TV_\mu^{(k)}(\bx) = \frac{1}{2} \sum_{i=1}^{n} w_i^{(k)} \| D_i \bx \|^2 + \frac{1}{2} TV_\mu(\bx^{(k)}),
%   = \frac{1}{2} \left \| \left ( \Omega^{(k)} \right )^{\frac{1}{2}} D \bx \right \|^2 + \frac{1}{2} TV_\mu(\bx^{(k)}) ,
$$
where
%$$
%   D = \left ( \begin{array}{c} D_1 \\ \cdots \\ D_n \end{array} \right ),
%   \quad
%   \Omega^{(k)} = \diag \left ( W_1^{(k)}, \ldots, W_n^{(k)} \right ),
%   \quad
%   W_i^{(k)} =  \left ( \begin{array}{cc}  w_i^{(k)} & 0  \\  0  &  w_i^{(k)} \end{array} \right ) ,
%$$
$$
   w_i^{(k)} = \left\{ \begin{array}{ll}
   \|D_i \bx^{(k)} \|^{-1}     & \mbox{if }  \|D_i \bx^{(k)} \| > \mu, \\
   \mu^{-1} & \mbox{otherwise}.
   \end{array} \right.
$$
Trivially,
$$
   TV_\mu^{(k)} (\bx^{(k)}) = TV_\mu (\bx^{(k)}), \quad \nabla TV_\mu^{(k)} (\bx^{(k)})  = \nabla TV_\mu (\bx^{(k)}) ;
$$
%\begin{eqnarray}
% & & \;\;\; D_{KL}^{(k)} (\bx^{(k)}) = D_{KL}(\bx^{(k)}),  \quad\quad\;\; TV_\mu^{(k)} (\bx^{(k)}) = TV_\mu (\bx^{(k)}),  \label{eq:funeq} \\
% & & \nabla D_{KL}^{(k)} (\bx^{(k)}) = \nabla D_{KL}(\bx^{(k)}), \quad  \nabla TV_\mu^{(k)} (\bx^{(k)})  = \nabla TV_\mu (\bx^{(k)}),  \label{eq:gradeq} \\
% & & \qquad\qquad\quad  D_{KL}^{(k)} (\bx) \ge D_{KL}(\bx) \mbox{ for all } \bx \ge 0. \nonumber --- NON SERVE
% \end{eqnarray}
furthermore,
$$
     \nabla^2 TV_\mu^{(k)} (\bx^{(k)}) = \sum_{i=1}^{n} w_i^{(k)} D_i^T D_i ,
$$
and hence, for any $\bx$ such that $\| D_i \bx^{(k)} \| \ne \mu$, the Hessian $\nabla^2 TV_\mu^{(k)}(\bx^{(k)})$ can
be regarded as an approximation of $\nabla^2 TV_\mu (\bx^{(k)})$, obtained by neglecting the higher order term
in the right-hand side of~\eqref{eq:hestvmu}, which generally increases the ill-conditioning of the Hessian matrix.
%$$
%    - \frac{(D_i^T D_i \bx^{(k)}) (D_i^T D_i \bx^{(k)})^T}{\|D_i \bx^{(k)} \|^3}
%$$
%(see~\eqref{eq:hestvmu}).
Thus, we can say that $TV_\mu^{(k)}$ contains some second-order information about $TV_\mu$.
It is worth noting that the higher order term of the Hessian of a smoothed TV function is also neglected
in the lagged diffusivity method by Vogel and Oman \cite{VogelOman1996}.

In the following, to simplify the notation we set
\begin{eqnarray*}
   F(\bx)         & = & D_{KL} (\bx) + \lambda \, TV_\mu(\bx), \\
   F_k(\bx) & = & D_{KL}^{(k)} (\bx) + \lambda \, TV_\mu^{(k)} (\bx).
\end{eqnarray*}
At iteration $k$, our method computes a feasible approximation $\widehat{\bx}^{(k)}$ to the solution $\overline{\bx}^{(k)}$
of the quadratic problem
\begin{equation}
\label{eq:quadpb}
\begin{array}{ll}
\minimize & \displaystyle F_k (\bx), \\
\st            & \bx \in \mS,
\end{array}
\end{equation}
and performs a line search along the direction
$$
     \bd^{(k)} = \widehat{\bx}^{(k)} - \bx^{(k)},
$$
until an Armijo condition is satisfied, to obtain an approximation $\bx^{(k+1)}$ to the solution of problem~\eqref{eq:poispb2}.
This procedure is sketched in Algorithm~\ref{alg:acquire} and is called ACQUIRE, which comes from ``Algorithm based
on Consecutive QUadratic and Iteratively REweighted norm approximations''.
%
%\begin{center}
%\begin{minipage}{.9\textwidth}
\begin{algorithm}%[h]
\caption{ -- ACQUIRE \label{alg:acquire}}
\vskip 1mm
\begin{spacing}{1.15}
%{\small
\begin{algorithmic}[1]
\STATE choose $\bx_0 \in \mS$, $\eta \in (0,1)$, $\delta \in (0,1)$, $\{ \varepsilon_k \}$ such that
           $\varepsilon_k > 0$ and $ \lim_{k \to \infty} \varepsilon_k = 0 $
\FOR{$k = 1, 2, \ldots$}
\STATE compute an approximate solution $\widehat{\bx}^{(k)} \in \mS$ to the quadratic problem~\eqref{eq:quadpb},
          such that
          \par \vspace*{-5mm} %{-9mm}
          \begin{equation}
          \label{eq:err}
             \| \widehat{\bx}^{(k)} - \overline{\bx}^{(k)} \|  \le  \varepsilon_k \; \mbox{ and } \;
             F_k (\widehat{\bx}^{(k)}) \le F_k ( \bx^{(k)} )  %,
          \end{equation}
          \par \vspace*{-4mm}
%          where $\overline{\bx}^{(k)}$ is the solution of problem~\eqref{eq:quadpb}
          \label{alg:approxmin}
%          \par \vspace*{-.5mm}
\STATE $\alpha_k := 1$
\STATE $\bd^{(k)} := \widehat{\bx}^{(k)} - \bx^{(k)}$
\STATE $\bx^{(k)}_\alpha := \bx^{(k)} + \alpha_k  \bd^{(k)}$
\WHILE{ $F( \bx^{(k)}_\alpha ) >  F(\bx^{(k)})  + \eta \alpha_k \nabla F (\bx^{(k)})^T \bd^{(k)}$ } \label{alg:line_search}
\STATE $\alpha_k := \delta \alpha_k$
\STATE $\bx^{(k)}_\alpha := \bx^{(k)} + \alpha_k  \bd^{(k)}$
\ENDWHILE
\STATE $\bx^{(k+1)} = \bx^{(k)}_\alpha$
\ENDFOR
\end{algorithmic}
%} % end small
\end{spacing}
\end{algorithm}
%\end{minipage}
%\end{center}

ACQUIRE is well posed (i.e., a steplength $\alpha_k$ satisfying the Armijo condition
can be found in a finite number of iterations) and is convergent; this is proved in Section~\ref{sec:convergence}.
Step~\ref{alg:approxmin} does not require the exact solution of problem~\eqref{eq:quadpb}, but only
the computation of an approximate solution such that condition \eqref{eq:err} at line~\ref{alg:approxmin}
of the algorithm holds, with $ \lim_{k \to \infty} \varepsilon_k = 0 $.

In Section~\ref{sec:convergence} we also show that the first condition in \eqref{eq:err} is satisfied if
\begin{equation}
\label{eq:innerstop}
    \| \nabla_\mS F_k (\widehat{\bx}^{(k)}) \| \le \theta^k  \| \nabla_\mS F_k (\bx^{(0)}) \| ,
\end{equation}
and $\theta \in (0,1)$. Therefore, the first condition in \eqref{eq:err} can be replaced by another one
which is simple to verify when the projected gradient can be easily computed, e.g.,
in the practical cases where $\mS$ is one of the sets in~\eqref{eq:constr1}-\eqref{eq:constr2}.

The second condition in~\eqref{eq:err} can be achieved by using any constrained minimization algorithm.
We note that, for the restoration problems considered in this work, gradient-projection methods, such as those
in~\cite{BonettiniZanellaZanni2009,diSerafinoToraldoViolaBarlow2018,MoreToraldo1991},
are suited to the solution of the inner problems~\eqref{eq:quadpb}. Indeed, numerical experiments
have shown that very low accuracy is required in practice in the solution of the inner problems;
furthermore, the computational cost per iteration of gradient projection methods is modest
when low-cost algorithms for the projection onto the feasible set are available. More details on the
inner method used in our experiments are given in Section~\ref{sec:experiments}.

\section{Well-posedness and convergence\label{sec:convergence}}

In order to prove that ACQUIRE is well posed, we need the following lemma \cite[Lemma A24]{Bertsekas1999}.
\begin{lemma}[Descent lemma]
\label{lem:descent}
Let $f : \R^n \to \R$ be continuously differentiable and let $\bx, \by \in \R^n$. If
there exists $L > 0$ such that
$$
    \| \nabla f(\bx + t \by) - \nabla f (\bx) \| \le L t \| \by \| \; \mbox{ for all } t \in [0,1] ,
$$
then
$$
   f (\bx + \by) \le f(\bx) + \nabla f (\bx)^T \by + \frac{L}{2} \| \by \|^2.
$$
\end{lemma}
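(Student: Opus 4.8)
The plan is to reduce the multivariate claim to a one–dimensional statement along the segment joining $\bx$ to $\bx+\by$. First I would introduce the auxiliary function $g:[0,1]\to\R$ defined by $g(t)=f(\bx+t\by)$. Since $f$ is continuously differentiable, the chain rule shows that $g$ is continuously differentiable on $[0,1]$ with $g'(t)=\nabla f(\bx+t\by)^T\by$; in particular $g'$ is continuous on $[0,1]$.

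Next I would apply the fundamental theorem of calculus to $g$ and then subtract the linear term $\nabla f(\bx)^T\by = \int_0^1 \nabla f(\bx)^T\by\,dt$, obtaining
\[
  f(\bx+\by)-f(\bx)-\nabla f(\bx)^T\by
  \;=\; \int_0^1 g'(t)\,dt - \int_0^1 \nabla f(\bx)^T\by\,dt
  \;=\; \int_0^1 \bigl(\nabla f(\bx+t\by)-\nabla f(\bx)\bigr)^T\by\,dt .
\]

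Finally I would bound the right-hand side from above. Using the standard estimate $\bigl|\int_0^1 h(t)\,dt\bigr|\le\int_0^1|h(t)|\,dt$, then the Cauchy--Schwarz inequality on the integrand, and then the hypothesized bound $\|\nabla f(\bx+t\by)-\nabla f(\bx)\|\le L t\,\|\by\|$ for $t\in[0,1]$, we get
\[
  f(\bx+\by)-f(\bx)-\nabla f(\bx)^T\by
  \;\le\; \int_0^1 \|\nabla f(\bx+t\by)-\nabla f(\bx)\|\;\|\by\|\,dt
  \;\le\; \int_0^1 L t\,\|\by\|^2\,dt
  \;=\; \frac{L}{2}\,\|\by\|^2 ,
\]
which is exactly the asserted inequality.

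There is no real obstacle here: the only points requiring a word of justification are that $g$ is differentiable with $g'(t)=\nabla f(\bx+t\by)^T\by$ (the chain rule, valid because $f\in C^1$) and that the fundamental theorem of calculus applies to $g$ (valid because $g'$ is continuous on the compact interval $[0,1]$). Everything else is a routine estimate, and notably no convexity of $f$ and no global Lipschitz continuity of $\nabla f$ are needed — only the local bound along the segment $[\bx,\bx+\by]$.
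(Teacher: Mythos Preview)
Your proof is correct and is the standard argument for the descent lemma. Note that the paper does not actually prove this lemma: it simply cites it as \cite[Lemma~A24]{Bertsekas1999}, so there is no in-paper proof to compare against; your argument is essentially the textbook one.
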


We also observe that, at step~\ref{alg:approxmin} of Algorithm~\ref{alg:acquire}, we can find $\widehat{\bx}^{(k)} \ne \bx^{(k)}$
unless $\bx^{(k)}$ is the solution $\overline\bx^{(k)}$ of problem~\eqref{eq:quadpb}. However, in this case
$\bx^{(k)}$ is the solution of problem~\eqref{eq:poispb2}, since the gradients, and hence the projected gradients,
of the objective functions of the two problems coincide at $\bx^{(k)}$. Therefore, in the following
we can assume that $\widehat{\bx}^{(k)} \ne \bx^{(k)}$.

The next theorem shows that the steplength $\alpha_k$ required to obtain the iterate $\bx^{(k+1)}$ can be found
after a finite number of steps and that it is bounded away from zero.
\begin{theorem}
\label{thm:steplength}
Let $\delta \in (0,1)$. There exist $\overline\alpha > 0$ independent of $k$ and an integer $j_k \ge 0$
such that for $\alpha_k = \delta^{\, j_k}$
\begin{eqnarray}
   & & F(\bx^{(k)}_\alpha) \le  F(\bx^{(k)}) + \eta \alpha_k \nabla F (\bx^{(k)})^T (\widehat{\bx}^{(k)} - \bx^{(k)}),
          \label{eq:armijo} \\
   & & \alpha_k \ge \overline\alpha \label{eq:lboundalpha}.
\end{eqnarray}
\end{theorem}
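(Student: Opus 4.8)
The plan is the standard two-part argument for a backtracking line search: first show that $\bd^{(k)}$ is a descent direction for $F$ at $\bx^{(k)}$ with a quantitative lower bound on $-\nabla F(\bx^{(k)})^T\bd^{(k)}$, and then invoke the Descent Lemma (Lemma~\ref{lem:descent}) to prove that the Armijo test is passed for every sufficiently small steplength, with the threshold bounded away from zero \emph{uniformly} in $k$.

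First I would observe that the two models agree to first order at $\bx^{(k)}$, so that $\nabla F_k(\bx^{(k)}) = \nabla F(\bx^{(k)})$: this follows from~\eqref{eq:quadkl}, which gives $\nabla D_{KL}^{(k)}(\bx^{(k)}) = \nabla D_{KL}(\bx^{(k)})$, and from the identity $\nabla TV_\mu^{(k)}(\bx^{(k)}) = \nabla TV_\mu(\bx^{(k)})$ recalled before~\eqref{eq:quadpb}. Next, $F_k$ is a convex quadratic, strongly convex with a modulus $\sigma > 0$ independent of $k$: its (constant) Hessian equals $\nabla^2 D_{KL}(\bx^{(k)}) + \gamma I + \lambda\sum_{i=1}^n w_i^{(k)} D_i^T D_i$, which is $\succeq \gamma I$ when $\gamma>0$ since $\nabla^2 D_{KL}\succeq 0$ and $\sum_i w_i^{(k)}D_i^TD_i\succeq 0$, while in the case $\gamma=0$ the assumptions that make that choice admissible ($\by>\b0$, $\mN(A)=\{\b0\}$, $\mS$ bounded) guarantee a uniform lower bound on the least eigenvalue of $\nabla^2 D_{KL}$, as discussed in Section~\ref{sec:preliminaries}. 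Writing the strong-convexity inequality for $F_k$ at the pair $\bx^{(k)}, \widehat{\bx}^{(k)}$ and using the second condition in~\eqref{eq:err} then gives
$$
   \nabla F(\bx^{(k)})^T\bd^{(k)} = \nabla F_k(\bx^{(k)})^T\bd^{(k)}
   \le F_k(\widehat{\bx}^{(k)}) - F_k(\bx^{(k)}) - \tfrac{\sigma}{2}\|\bd^{(k)}\|^2
   \le -\tfrac{\sigma}{2}\|\bd^{(k)}\|^2 < 0,
$$
where strictness holds because $\widehat{\bx}^{(k)}\ne\bx^{(k)}$, as remarked before the statement.

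Next, since $\mS$ is convex and $\alpha_k\le 1$, every trial point $\bx^{(k)}_\alpha=(1-\alpha)\bx^{(k)}+\alpha\widehat{\bx}^{(k)}$ with $\alpha\in[0,1]$ lies in $\mS\subseteq\R_+^n$, where $\nabla F=\nabla D_{KL}+\lambda\nabla TV_\mu$ is Lipschitz continuous with a constant $L_F$ that does not depend on $k$ (both $\nabla D_{KL}$ and $\nabla TV_\mu$ are Lipschitz, cf. Section~\ref{sec:preliminaries}). Applying Lemma~\ref{lem:descent} to $F$ along the segment from $\bx^{(k)}$ to $\bx^{(k)}+\alpha\bd^{(k)}$, and then comparing with the right-hand side of~\eqref{eq:armijo} and using the descent bound above, one checks that~\eqref{eq:armijo} holds for every $\alpha\le\overline\alpha_0:=\min\{1,(1-\eta)\sigma/L_F\}$; hence the while loop stops after finitely many halvings, i.e.\ $j_k<\infty$. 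Finally, either $j_k=0$, so $\alpha_k=1$, or $j_k\ge 1$, in which case $\alpha=\delta^{\,j_k-1}$ failed the Armijo test, forcing $\delta^{\,j_k-1}>\overline\alpha_0$ and therefore $\alpha_k=\delta^{\,j_k}>\delta\overline\alpha_0$; in both cases $\alpha_k\ge\overline\alpha:=\delta\overline\alpha_0>0$, independently of $k$, which is~\eqref{eq:lboundalpha}.

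The only genuine difficulty is securing the uniformity in $k$ of the two constants $\sigma$ and $L_F$; this is precisely where the regularization term $\gamma I$ (or, alternatively, the bounded-set hypothesis) and the global Lipschitz continuity of $\nabla F$ on $\R_+^n$ enter, and once these are in place the rest is the textbook backtracking estimate. A secondary point worth stating explicitly is that the trial iterates remain feasible — hence in the domain where $F$ is defined and smooth — which is immediate from convexity of $\mS$ together with $\alpha_k\in(0,1]$.
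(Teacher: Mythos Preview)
Your proof is correct and follows essentially the same route as the paper: the Descent Lemma applied to $F$ with Lipschitz constant $L$, combined with the strong-convexity bound $\nabla F(\bx^{(k)})^T\bd^{(k)}\le -\tfrac{\gamma}{2}\|\bd^{(k)}\|^2$ (derived exactly as you do, from $\nabla F_k(\bx^{(k)})=\nabla F(\bx^{(k)})$ and the second condition in~\eqref{eq:err}), yielding the same threshold $\min\{1,\gamma(1-\eta)/L\}$ and the same $\overline\alpha=\min\{1,\delta\gamma(1-\eta)/L\}$. Your write-up is, if anything, a bit more explicit than the paper's about why the strong-convexity modulus is uniform in $k$ and about the standard backtracking dichotomy that gives the lower bound on $\alpha_k$.
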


\begin{proof}
For $F$ has Lipschitz continuous gradient, by applying Lemma~\ref{lem:descent} we get
$$
   F(\bx^{(k)}_\alpha)  \le F (\bx^{(k)}) + \alpha_k \nabla F (\bx^{(k)})^T (\widehat{\bx}^{(k)} - {\bx}^{(k)}) +
   \alpha_k^2 \frac{L }{2} \| \widehat{\bx}^{(k)} - {\bx}^{(k)} \|^2 ,
$$
where $L$ is the Lipschitz constant of $\nabla F$. Then, \eqref{eq:armijo} holds if we find
$\alpha_k$ such that
$$
   \nabla F (\bx^{(k)})^T (\widehat{\bx}^{(k)} - {\bx}^{(k)}) + \alpha_k  \frac{L }{2} \| \widehat{\bx}^{(k)} - {\bx}^{(k)} \|^2
   \le \eta \nabla F (\bx^{(k)})^T (\widehat{\bx}^{(k)} - {\bx}^{(k)}),
$$
or, equivalently,
\begin{equation}
\label{eq:thstep_1}
   (1 - \eta) \nabla F (\bx^{(k)})^T (\widehat{\bx}^{(k)} - {\bx}^{(k)}) +
    \alpha_k \frac{L }{2} \| \widehat{\bx}^{(k)} - {\bx}^{(k)} \|^2 \le 0 .
\end{equation}
From $\nabla F_k(\bx^{(k)}) = \nabla F(\bx^{(k)})$, the strong convexity of $F_k$ and
step~\ref{alg:approxmin} of Algorithm~\ref{alg:acquire}, it follows that
\begin{eqnarray}
   \nabla F(\bx^{(k)})^T (\widehat{\bx}^{(k)} - {\bx}^{(k)})
       &  =   & \nabla F_k(\bx^{(k)})^T (\widehat{\bx}^{(k)} - {\bx}^{(k)}) \nonumber \\
       & \le & F_k(\widehat{\bx}^{(k)}) - F_k(\bx^{(k)}) - \frac{\gamma}{2} \| \widehat{\bx}^{(k)} - \bx^{(k)} \|^2
           \label{eq:thstep_2} \\
       & \le & - \frac{\gamma}{2} \| \widehat{\bx}^{(k)} - \bx^{(k)} \|^2 , \nonumber
\end{eqnarray}
where $\gamma$ is the strong convexity parameter of $F_k$. Thus, \eqref{eq:thstep_1}
holds for any $\alpha_k$ such that
$$
   \frac{\gamma}{2} (\eta - 1)  \| \widehat{\bx}^{(k)} - \bx^{(k)} \|^2 +
   \alpha_k \frac{L }{2} \| \widehat{\bx}^{(k)} - {\bx}^{(k)} \|^2 \le 0 .
$$
% that is
% $$
%     \alpha_k \le \frac{c (1 - \eta)}{L}.
% $$
By choosing the first nonnegative integer $j_k$ such that
$$
    \delta^{\, j_k} \le  \min \left \{ 1, \frac{ \gamma \, (1 - \eta)}{L} \right \}
$$
and setting
$$
   \overline\alpha = \min \left \{ 1, \frac{\delta\, \gamma \, (1 - \eta)}{L} \right \}
$$
% L'ultima relazione vale perch\'e $\gamma^{\, j_k -1} \ge \min \left \{ 1, \frac{c (1 - \eta)}{L} \right \}$.
we get the thesis.
\end{proof}

Now we prove that the sequence generated by ACQUIRE  has a subsequence converging to
a solution of problem~\eqref{eq:poispb2}. Because of the convexity of $F$, it is sufficient to prove that the
subsequence converges to a constrained stationary point of $F$.
\begin{theorem}
\label{th:conv}
Let $\{ \bx^{(k)} \}$ be the sequence generated by Algorithm~\ref{alg:acquire}. Then there exists a subsequence
$\{ \bx^{(k_j)} \}$ such that
$$
     \lim_{k_j \to \infty}  \bx^{(k_j)} = \overline\bx,
$$
where $\overline{\bx} \in \mS$ is such that $\nabla_\mS F(\overline\bx) = 0$.
Furthermore, any limit point $\widetilde\bx$ of $\{ \bx^{(k)} \}$ is such that $\nabla_\mS F(\widetilde\bx) = 0$.
\end{theorem}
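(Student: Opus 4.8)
The plan is to combine the sufficient decrease guaranteed by Theorem~\ref{thm:steplength} with the vanishing of the search step, and then to carry the exact stationarity of the quadratic subproblems over to the limit. First I would chain the Armijo inequality~\eqref{eq:armijo}, the bound $\nabla F(\bx^{(k)})^T\bd^{(k)} \le -\tfrac{\gamma}{2}\|\bd^{(k)}\|^2$ contained in~\eqref{eq:thstep_2}, and the uniform lower bound~\eqref{eq:lboundalpha}, $\alpha_k \ge \overline\alpha > 0$, to obtain
$$
  F(\bx^{(k+1)}) \le F(\bx^{(k)}) - \frac{\eta\,\overline\alpha\,\gamma}{2}\,\|\bd^{(k)}\|^2 .
$$
Since $F = D_{KL} + \lambda\,TV_\mu \ge 0$ is bounded below, the nonincreasing sequence $\{F(\bx^{(k)})\}$ converges, so $\sum_k \|\bd^{(k)}\|^2 < \infty$ and hence $\|\bd^{(k)}\| = \|\widehat\bx^{(k)} - \bx^{(k)}\| \to 0$. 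Moreover $F(\bx^{(k)}) \le F(\bx^{(0)})$ for every $k$, so the whole sequence lies in $\mathcal{L}_0 := \{ \bx \in \mS : F(\bx) \le F(\bx^{(0)}) \}$; since $D_{KL}$, and thus $F$, is coercive on $\R_+^n \supseteq \mS$, this set is bounded, and because $\mS$ is closed there is a convergent subsequence $\bx^{(k_j)} \to \overline\bx \in \mS$.

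Next I would propagate this convergence to the quantities attached to step~\ref{alg:approxmin}. From $\|\widehat\bx^{(k)} - \bx^{(k)}\| \to 0$ we get $\widehat\bx^{(k_j)} \to \overline\bx$, and from the first condition in~\eqref{eq:err}, $\|\widehat\bx^{(k)} - \overline\bx^{(k)}\| \le \varepsilon_k \to 0$, we get that the exact subproblem minimizers satisfy $\overline\bx^{(k_j)} \to \overline\bx$ and that $\|\overline\bx^{(k)} - \bx^{(k)}\| \le \varepsilon_k + \|\bd^{(k)}\| \to 0$. Then I would control the gradient mismatch between model and objective: since $F_k$ is quadratic, $\nabla F_k(\overline\bx^{(k)}) - \nabla F_k(\bx^{(k)}) = \big( \nabla^2 D_{KL}(\bx^{(k)}) + \gamma I + \lambda\sum_i w_i^{(k)} D_i^T D_i \big)(\overline\bx^{(k)} - \bx^{(k)})$, where $\|\nabla^2 D_{KL}(\bx^{(k)})\|$ is uniformly bounded on $\mathcal{L}_0$ (because $A\bx^{(k)} + \bb \ge \bb > 0$ and $\mathcal{L}_0$ is bounded) and $\|\sum_i w_i^{(k)} D_i^T D_i\| \le \mu^{-1}\|\sum_i D_i^T D_i\|$ uniformly, since $0 < w_i^{(k)} \le \mu^{-1}$. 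Hence $\|\nabla F_k(\overline\bx^{(k)}) - \nabla F_k(\bx^{(k)})\| \to 0$; combining this with $\nabla F_k(\bx^{(k)}) = \nabla F(\bx^{(k)})$ and the continuity of $\nabla F$ (which gives $\nabla F(\bx^{(k_j)}) \to \nabla F(\overline\bx)$) yields $\nabla F_{k_j}(\overline\bx^{(k_j)}) \to \nabla F(\overline\bx)$.

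Finally I would transfer stationarity through the variational inequality. As $F_k$ is convex and $\overline\bx^{(k)}$ minimizes it over the convex set $\mS$, we have $\nabla F_k(\overline\bx^{(k)})^T(\bv - \overline\bx^{(k)}) \ge 0$ for every $\bv \in \mS$. Fixing $\bv \in \mS$ and letting $k = k_j \to \infty$ gives $\nabla F(\overline\bx)^T(\bv - \overline\bx) \ge 0$ for all $\bv \in \mS$, i.e. $-\nabla F(\overline\bx) \in N_\mS(\overline\bx)$, which by~\eqref{eq:grad_decomp} and the characterization of constrained stationary points is exactly $\nabla_\mS F(\overline\bx) = 0$; by convexity of $F$, $\overline\bx$ minimizes~\eqref{eq:poispb2}. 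The argument used only $\|\bd^{(k)}\| \to 0$, $\varepsilon_k \to 0$ and boundedness of $\mathcal{L}_0$, all of which hold for the full sequence, so applying it to an arbitrary convergent subsequence shows that every limit point $\widetilde\bx$ of $\{\bx^{(k)}\}$ satisfies $\nabla_\mS F(\widetilde\bx) = 0$. I expect the main obstacle to be the gradient-mismatch estimate in the second paragraph: one must verify that the model Hessians, in particular the IRN weights $w_i^{(k)}$ and the KL Hessians $\nabla^2 D_{KL}(\bx^{(k)})$, stay uniformly bounded along the iteration, so that $\overline\bx^{(k)} - \bx^{(k)} \to 0$ really forces $\nabla F_k(\overline\bx^{(k)}) - \nabla F(\bx^{(k)}) \to 0$; the passage to the limit in the variational inequality is then routine.
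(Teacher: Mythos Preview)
Your proof is correct and shares the paper's scaffolding: sufficient decrease from \eqref{eq:armijo}, \eqref{eq:thstep_2} and \eqref{eq:lboundalpha}; coercivity of $F$ for boundedness; $\|\widehat\bx^{(k)}-\bx^{(k)}\|\to 0$; and then the inexactness bound in \eqref{eq:err} to force $\|\overline\bx^{(k)}-\bx^{(k)}\|\to 0$. The only real difference is in the final passage to the limit. The paper uses that $\nabla_\mS F_{k_j}(\overline\bx^{(k_j)})=0$ and the nonexpansiveness of the projection to bound $\|\nabla_\mS F(\overline\bx^{(k_j)})\|$ by the gradient mismatch $\|\nabla F(\overline\bx^{(k_j)})-\nabla F_{k_j}(\overline\bx^{(k_j)})\|$, shows this vanishes, and then invokes the lower semicontinuity of $\|\nabla_\mS F\|$ (quoted from \cite{CalamaiMore1987}) to conclude $\nabla_\mS F(\overline\bx)=0$. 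You instead pass to the limit directly in the first-order optimality inequality $\nabla F_k(\overline\bx^{(k)})^T(\bv-\overline\bx^{(k)})\ge 0$, which is a bit more self-contained since it sidesteps the lower-semicontinuity fact. Both routes hinge on the same ingredient you correctly identified as the crux---a uniform bound on the model Hessians so that $\nabla F_{k_j}(\overline\bx^{(k_j)})\to\nabla F(\overline\bx)$---and your justification of that bound (via $w_i^{(k)}\le\mu^{-1}$ and $A\bx^{(k)}+\bb\ge\bb>0$ on $\mathcal{L}_0$) makes explicit what the paper compresses into the phrase ``by using~\eqref{eq:limxover}''.
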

\begin{proof}
Let $\alpha_k = \delta^{\, j^k}$, where $j_k$ is given in Theorem~\ref{thm:steplength}.
By~\eqref{eq:armijo} and \eqref{eq:thstep_2} we have
$$
   F(\bx^{(k+1)}) - F(\bx^{(k)}) \le - \, \alpha_k \eta \, \frac{\gamma}{2} \, \| \widehat{\bx}^{(k)} - \bx^{(k)} \|^2 \le 0 ;
$$
then $\{ F(\bx^{(k)}) \}$ is convergent, % perche' e' monotona decrescente e limitata inferiormente
and the coercivity of $F$ implies that $\{ \bx^{(k)} \}$ is bounded. Since $\alpha_k \ge \overline\alpha > 0$, we have that
\begin{equation}
\label{eq:limxhat}
    \lim_{k \to \infty} \| \widehat\bx^{(k)} - \bx^{(k)} \| = 0
\end{equation}
and $\{ \widehat\bx^{(k)} \}$ is bounded. This, together with $\| \overline\bx^{(k)} - \bx^{(k)} \| \le
\| \overline\bx^{(k)}  - \widehat\bx^{(k)} \| + \| \widehat\bx^{(k)} - \bx^{(k)} \|$ and
the first inequality in~\eqref{eq:err}, implies that
\begin{equation}
\label{eq:diffxover}
    \lim_{k \to \infty} \| \overline\bx^{(k)} - \bx^{(k)} \| = 0
\end{equation}
and hence $\{ \overline\bx^{(k)} \}$ is bounded. Passing to subsequences, we have
\begin{equation}
\label{eq:limxover}
     \lim_{k_j \to \infty}  \bx^{(k_j)} = \lim_{k_j \to \infty}  \overline\bx^{(k_j)} = \overline\bx \in \mS.
\end{equation}
Since the projection onto a nonempty closed convex set is nonexpansive, we get
$$
   \| \nabla_\mS F (\overline\bx^{(k_j)}) \| =
   \| \nabla_\mS F (\overline\bx^{(k_j)}) - \nabla_\mS F_{k_j} (\overline\bx^{(k_j)}) \|  \le
   \| \nabla F (\overline\bx^{(k_j)}) - \nabla F_{k_j} (\overline\bx^{(k_j)}) \| ,
$$
and,  by using~\eqref{eq:limxover},
$$
    \lim_{k_j \to \infty} \| \nabla_\mS F (\overline\bx^{(k_j)}) \| =
    \lim_{k_j \to \infty} \| \nabla F (\overline\bx^{(k_j)}) - \nabla F_{k_j} (\overline\bx^{(k_j)}) \| = 0.
$$
Then, for the lower semicontinuity of $\| \nabla_\mS F \|$, we have
$$
   \nabla_\mS F(\overline\bx) = \b0.
$$
If $\widetilde\bx$ is any limit point of $\{ \bx^{(k)} \}$, then $\widetilde\bx \in \mS$
and, by exploiting \eqref{eq:diffxover} and passing to subsequences, we have
\begin{equation}
\label{eq:limxtilde}
     \lim_{k_r \to \infty}  \bx^{(k_r)} = \lim_{k_r \to \infty}  \overline\bx^{(k_r)} = \widetilde\bx \in \mS.
\end{equation}
By reasoning as above we get
$$
   \nabla_\mS F(\widetilde\bx) = \b0,
$$
which concludes the proof.
\end{proof}

We note that ACQUIRE fits into the very general algorithmic framework presented
in~\cite{FacchineiLamparielloScutari2017} and hence Theorem~\ref{th:conv}
could be derived by specializing and adapting the convergence theory of that framework, taking into
account the specific properties of the functions $D_{KL}(\bx)$ and $TV_\mu (\bx)$ and their quadratic approximations
$D_{KL}^{(k)} (\bx)$ and $TV_\mu^{(k)} (\bx)$, and the line search used. However, for the sake
of clarity and self-consistency, we decided to prove the convergence of~Algorithm~\ref{alg:acquire} from scratch.

Now we show that if the objective function is strictly convex, the whole sequence $\{ \bx^{(k)} \}$
converges to the minimizer of problem~\eqref{eq:poispb2}.
\begin{theorem}
\label{th:conv_whole_sequence}
Assume that the function $F$ is strictly convex. Then the sequence $\{ \bx^{(k)} \}$ generated
by Algorithm~\ref{alg:acquire} converges to a point $\overline\bx \in \mS$ such that $\nabla_\mS F(\overline\bx) = 0$.
\end{theorem}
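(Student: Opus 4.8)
The plan is to combine the conclusions of Theorem~\ref{th:conv} with the uniqueness of the minimizer that strict convexity supplies. First I would observe that, since $F$ is strictly convex on the convex set $\mS$, problem~\eqref{eq:poispb2} has at most one solution, and any constrained stationary point of $F$ in $\mS$ is necessarily this global minimizer; in particular there is a unique $\bx\in\mS$ with $\nabla_\mS F(\bx)=\b0$. By Theorem~\ref{th:conv} such a point exists: the limit $\overline\bx$ of the subsequence $\{\bx^{(k_j)}\}$ satisfies $\nabla_\mS F(\overline\bx)=\b0$, hence $\overline\bx$ is the unique solution of~\eqref{eq:poispb2}.

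Next I would recall two facts already established inside the proof of Theorem~\ref{th:conv}: the sequence $\{\bx^{(k)}\}$ is bounded, as a consequence of the monotone decrease of $\{F(\bx^{(k)})\}$ together with the coercivity of $F$; and every limit point $\widetilde\bx$ of $\{\bx^{(k)}\}$ satisfies $\nabla_\mS F(\widetilde\bx)=\b0$. By the uniqueness noted above, every such $\widetilde\bx$ must equal $\overline\bx$, so $\{\bx^{(k)}\}$ is a bounded sequence in $\R^n$ whose only accumulation point is $\overline\bx$.

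Finally I would invoke the elementary topological fact that a bounded sequence in $\R^n$ with a unique accumulation point converges to that point: if $\bx^{(k)}\not\to\overline\bx$, there would be $\varepsilon>0$ and a subsequence with $\|\bx^{(k_r)}-\overline\bx\|\ge\varepsilon$ for all $r$; this subsequence is bounded, hence admits a convergent sub-subsequence whose limit is an accumulation point of $\{\bx^{(k)}\}$ at distance at least $\varepsilon$ from $\overline\bx$, contradicting uniqueness. Therefore $\bx^{(k)}\to\overline\bx$, which is the claim. I do not expect a genuine obstacle here; the only point needing care is that it is precisely strict convexity that upgrades ``some limit point is stationary'' to ``the unique limit point is the minimizer'', after which convergence of the whole sequence is purely a matter of boundedness plus uniqueness of the accumulation point.
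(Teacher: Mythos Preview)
Your argument is correct. Once you note that strict convexity forces the constrained stationary point to be unique, the two conclusions of Theorem~\ref{th:conv}---boundedness of $\{\bx^{(k)}\}$ and stationarity of every limit point---immediately give a bounded sequence with a single accumulation point, and the standard compactness argument finishes the job.

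The paper reaches the same destination but by a longer route, following Birgin--Kreji\'c--Mart\'inez. After observing (as you do) that $\overline\bx$ is the unique minimizer, it does \emph{not} directly invoke ``bounded with one cluster point implies convergent''. Instead it uses the additional fact $\|\bx^{(k+1)}-\bx^{(k)}\|\to 0$ (which follows from $\alpha_k\le 1$ and~\eqref{eq:limxhat}) together with an annulus-trapping argument: for $0<\varepsilon<\delta$, the shell $\{\varepsilon\le\|\bx-\overline\bx\|\le\delta\}$ eventually contains no iterates, and since successive iterates differ by less than $\delta-\varepsilon$, an iterate that enters the $\varepsilon$-ball can never escape it. This machinery is designed for settings where one only knows that limit points stay out of a fixed annulus; here, because Theorem~\ref{th:conv} already gives that \emph{every} limit point is stationary and hence equals $\overline\bx$, your direct approach is both shorter and cleaner. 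The paper's proof buys nothing extra in this context; yours exploits the full strength of Theorem~\ref{th:conv} more transparently.
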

\begin{proof}
We follow the line of the proof of Lemma~2 in~\cite{BirginKrejicMartinez2003}.
By Theorem~\ref{th:conv} we know that there exists a limit point $\overline\bx$ of $\{ \bx^{(k)} \}$
such that  $\nabla_\mS F(\overline\bx) = 0$. Since $F$ is strictly convex, $\overline\bx$ is the optimal solution
of problem~\eqref{eq:poispb2}. We must prove that $\{ \bx^{(k)} \}$ converges to $\overline\bx$.

From $\alpha_k \le 1$ it follows that $\| \bx^{(k+1)} - \bx^{(k)} \|\leq \| \widehat\bx^{(k)} - \bx^{(k)} \|$ and, by \eqref{eq:limxhat},
$$
   \lim_{k \to \infty} \| \bx^{(k+1)} - \bx^{(k)} \| = 0.
$$
Since $\overline\bx$ is a strict minimizer, there exists $\delta > 0$ such that $F(\overline\bx) < F(\bx)$ for all $\bx \in \mS$
such that $0 < \| \bx - \overline\bx \| \le \delta$. For all $\varepsilon \in (0, \delta)$, it follows from Theorem~\ref{th:conv} that
the set $B = \left\{ \bx \in \mS : \delta \le \| \bx - \overline\bx \| \le \varepsilon \right\}$ does not contain any limit point of $\{ \bx^{(k)} \}$;
% l'insieme non contiene punti di minimo, che sono punti stazionari e quindi non puo' contenere punti limite
thus, there exists $k_0$ such that $\bx^{(k)} \not\in B$ for all $k > k_0$ .
% altrimenti, essendo $B$ chiuso e limitato, la successione avrebbe un'estratta convergente a un punto di $B$.
Let $ k_1 \ge k_0 $ such that, for all $ k > k_1$,
$$
     \| \bx^{(k+1)} - \bx^{(k)} \|< \delta - \varepsilon.
$$
Let $K$ be the set of indices defining a subsequence of $\{ \bx^{(k)} \}$ converging to $\overline\bx$. There exists $k \in K$, $k > k_1$, such that
$$
   \| \bx^{(k)} - \overline\bx \| < \varepsilon ,
$$
and hence
$$
    \| \bx^{(k+1)} - \overline\bx \|  \leq  \| \bx^{(k+1)} - \bx^{(k)}\| + \| \bx^{(k)} - \overline\bx \| < \delta - \varepsilon + \varepsilon = \delta.
$$
Since $\bx^{(k+1)} \not \in B$, we get
$$
    \| \bx^{(k+1)} -\overline\bx \|< \varepsilon.
$$
By the same argument we can prove that $\| \bx^{(k+j)} -\overline\bx \|< \varepsilon$ implies $\| \bx^{(k+j+1)} -\overline\bx \|< \varepsilon$,
and hence, by induction, we have
$$
   \| \bx^{(k+j)} -\overline\bx \|< \varepsilon \quad \mbox{for all } j.
$$
Since $\varepsilon$ is arbitrary, the thesis holds.
\end{proof}

We conclude this section by showing that the stopping criterion~\eqref{eq:innerstop}
can be used to determine $\widehat\bx^{(k)}$ at step~\ref{alg:approxmin} of ACQUIRE.
\begin{theorem}
\label{thm:innerstop}
Assume that~\eqref{eq:innerstop} holds for some $\theta \in (0,1)$. Then, there exists $\{ \varepsilon_k \}$,
with $\varepsilon_k>0$ and $\lim_{k \to \infty} \varepsilon_k = 0$, such that~\eqref{eq:err} holds.
\end{theorem}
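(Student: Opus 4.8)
The plan is to exploit that every quadratic model $F_k$ is strongly convex with a modulus $\gamma$ independent of $k$: from $\nabla^2 F_k = \nabla^2 D_{KL}(\bx^{(k)}) + \gamma I + \lambda \sum_{i=1}^n w_i^{(k)} D_i^T D_i$, the positive semidefiniteness of $\nabla^2 D_{KL}(\bx^{(k)}) = A^T U(\bx^{(k)})^2 A$ and the positivity of the weights $w_i^{(k)}$ give $\nabla^2 F_k \succeq \gamma I$. For such a function the distance from a feasible point to the unique constrained minimizer $\overline{\bx}^{(k)}$ is controlled by the projected-gradient residual, namely $\| \widehat{\bx}^{(k)} - \overline{\bx}^{(k)} \| \le \gamma^{-1} \| \nabla_\mS F_k(\widehat{\bx}^{(k)}) \|$; combined with the assumed inner stopping test~\eqref{eq:innerstop} this gives $\| \widehat{\bx}^{(k)} - \overline{\bx}^{(k)} \| \le \gamma^{-1} \theta^k \| \nabla_\mS F_k(\bx^{(0)}) \|$. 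It then suffices to bound $\| \nabla_\mS F_k(\bx^{(0)}) \|$ by a constant independent of $k$ and to take $\varepsilon_k$ equal to a constant multiple of $\theta^k$; since $\theta \in (0,1)$, the resulting sequence is positive and tends to zero. The second inequality in~\eqref{eq:err} requires no separate argument here, because $\widehat{\bx}^{(k)}$ is produced by a monotone inner solver started at $\bx^{(k)}$ (as discussed in Section~\ref{sec:algorithm}), so that $F_k(\widehat{\bx}^{(k)}) \le F_k(\bx^{(k)})$ by construction.

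To establish the error bound I would prove, and then apply with $g = F_k$, $\bz = \widehat{\bx}^{(k)}$ and $\bz^* = \overline{\bx}^{(k)}$, the elementary fact that if $g$ is $\gamma$-strongly convex on the nonempty closed convex set $\mS$ with minimizer $\bz^*$, then $\| \bz - \bz^* \| \le \gamma^{-1} \| \nabla_\mS g(\bz) \|$ for every $\bz \in \mS$. The argument combines three ingredients: (i) the first-order optimality of $\overline{\bx}^{(k)}$, which gives $\nabla F_k(\overline{\bx}^{(k)})^T (\widehat{\bx}^{(k)} - \overline{\bx}^{(k)}) \ge 0$ because $\widehat{\bx}^{(k)} \in \mS$; (ii) the strong convexity of $F_k$, which gives $(\nabla F_k(\widehat{\bx}^{(k)}) - \nabla F_k(\overline{\bx}^{(k)}))^T (\widehat{\bx}^{(k)} - \overline{\bx}^{(k)}) \ge \gamma \| \widehat{\bx}^{(k)} - \overline{\bx}^{(k)} \|^2$, so that adding (i) and (ii) yields $\nabla F_k(\widehat{\bx}^{(k)})^T (\widehat{\bx}^{(k)} - \overline{\bx}^{(k)}) \ge \gamma \| \widehat{\bx}^{(k)} - \overline{\bx}^{(k)} \|^2$; and (iii) the decomposition~\eqref{eq:grad_decomp} together with $\overline{\bx}^{(k)} - \widehat{\bx}^{(k)} \in T_\mS(\widehat{\bx}^{(k)})$, which holds since $\mS$ is convex and makes the normal-cone component of $-\nabla F_k(\widehat{\bx}^{(k)})$ contribute nonpositively, so that $\nabla F_k(\widehat{\bx}^{(k)})^T (\widehat{\bx}^{(k)} - \overline{\bx}^{(k)}) \le -\nabla_\mS F_k(\widehat{\bx}^{(k)})^T (\widehat{\bx}^{(k)} - \overline{\bx}^{(k)}) \le \| \nabla_\mS F_k(\widehat{\bx}^{(k)}) \| \, \| \widehat{\bx}^{(k)} - \overline{\bx}^{(k)} \|$ by the Cauchy--Schwarz inequality. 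Dividing by $\| \widehat{\bx}^{(k)} - \overline{\bx}^{(k)} \|$, the case $\widehat{\bx}^{(k)} = \overline{\bx}^{(k)}$ being trivial, gives the bound.

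For the uniform bound, $\nabla_\mS F_k(\bx^{(0)})$ is the orthogonal projection of $-\nabla F_k(\bx^{(0)})$ onto the tangent cone $T_\mS(\bx^{(0)})$, which contains $\b0$, hence $\| \nabla_\mS F_k(\bx^{(0)}) \| \le \| \nabla F_k(\bx^{(0)}) \|$. By the Taylor and IRN construction, $\nabla F_k(\bx^{(0)}) = \nabla D_{KL}(\bx^{(k)}) + \left( \nabla^2 D_{KL}(\bx^{(k)}) + \gamma I \right)(\bx^{(0)} - \bx^{(k)}) + \lambda \sum_{i=1}^n w_i^{(k)} D_i^T D_i\, \bx^{(0)}$. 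On $\R_+^n$ both $\nabla D_{KL}$ and $\nabla^2 D_{KL}$ are bounded, directly from the expressions in Section~\ref{sec:preliminaries} and the assumptions $\bb > \b0$, $\by \ge \b0$ and~\eqref{eq:matrixa}; the weights satisfy $0 < w_i^{(k)} \le \mu^{-1}$; the $D_i$ are fixed; and $\{ \bx^{(k)} \}$ is bounded, as in the proof of Theorem~\ref{th:conv}, where boundedness follows from the monotone decrease of $\{ F(\bx^{(k)}) \}$ and the coercivity of $F$. Consequently $\| \nabla F_k(\bx^{(0)}) \| \le C$ for some $C$ independent of $k$, and $\varepsilon_k := C \gamma^{-1} \theta^k$ satisfies all the requirements. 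The main obstacle is precisely this last estimate: since $F_k$ varies with $k$ through both the expansion point $\bx^{(k)}$ and the weights $w_i^{(k)}$, one must check that none of the quantities appearing in $\nabla F_k(\bx^{(0)})$ can grow, which is where the boundedness of $\{ \bx^{(k)} \}$ and the global boundedness of the KL derivatives enter. The error bound of the previous paragraph is, by contrast, a routine consequence of strong convexity and of the projected-gradient decomposition already recalled in Section~\ref{sec:preliminaries}.
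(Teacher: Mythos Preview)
Your argument is correct and follows essentially the same route as the paper: strong convexity of $F_k$ together with the tangent/normal-cone decomposition~\eqref{eq:grad_decomp} yields $\|\widehat\bx^{(k)} - \overline\bx^{(k)}\| \le c\,\gamma^{-1}\|\nabla_\mS F_k(\widehat\bx^{(k)})\|$, and then~\eqref{eq:innerstop} produces an $\varepsilon_k$ proportional to $\theta^k$. You in fact go slightly further than the paper, which simply sets $\varepsilon_k = (2/\gamma)\,\theta^k\,\|\nabla_\mS F_k(\bx^{(0)})\|$ without checking that this tends to zero; your uniform bound on $\|\nabla_\mS F_k(\bx^{(0)})\|$ closes that gap, and your observation that the boundedness of $\{\bx^{(k)}\}$ relies only on the descent half of~\eqref{eq:err} correctly avoids circularity.
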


\begin{proof}
First we recall that $- \nabla F_k(\bx) = \nabla_\mS F_k(\bx) + P_{N_\mS (\bx)} (- \nabla F_k(\bx))$ (see \eqref{eq:grad_decomp}).
Since $F_k$ is strongly convex with parameter $\gamma$ and $\overline\bx^{(k)}$ is the
solution of problem~\eqref{eq:quadpb}, we have
\begin{eqnarray*}
    \frac{\gamma}{2} \| \widehat\bx^{(k)} -\overline\bx^{(k)} \|^2 & \le &
                  ( \nabla F_k(\widehat\bx^{(k)}) - \nabla F_k(\overline\bx^{(k)}) )^T
                  ( \widehat\bx^{(k)} - \overline\bx^{(k)} ) \\
    % \overline\bx^{(k)} soluz pb quadratico => gradiente proiettato di F_k in \overline\bx^{(k)} nullo
    & = & ( \nabla_\mS F_k(\widehat\bx^{(k)}) )^T  ( \overline\bx^{(k)} - \widehat\bx^{(k)} ) +
              P_{N_\mS (\widehat\bx^{(k)})} ( - \nabla F_k(\widehat\bx^{(k)}) )^T  ( \overline\bx^{(k)} - \widehat\bx^{(k)} ) \\[1.5mm]
    & + & P_{N_\mS (\overline\bx^{(k)})} ( - \nabla F_k(\overline\bx^{(k)}) )^T  ( \widehat\bx^{(k)} - \overline\bx^{(k)} ).
\end{eqnarray*}
Since $\overline\bx^{(k)} - \widehat\bx^{(k)}$ belongs to the tangent cone at $\widehat\bx^{(k)}$ and
$\widehat\bx^{(k)} - \overline\bx^{(k)}$ belongs to the tangent cone at $\overline\bx^{(k)}$,
% e quindi i prodotti scalari che includono questi vettori sono <= 0
we get
$$
    \frac{\gamma}{2} \| \widehat\bx^{(k)} -\overline\bx^{(k)} \|^2 \le
    ( \nabla_\mS F_k(\widehat\bx^{(k)}) )^T  ( \overline\bx^{(k)} - \widehat\bx^{(k)} ) \le
    \| \nabla_\mS F_k(\widehat\bx^{(k)}) \|  \| \overline\bx^{(k)} - \widehat\bx^{(k)} \| .
$$
It follows that
$$
    \| \widehat\bx^{(k)} -\overline\bx^{(k)} \| \le \frac{2}{\gamma}  \| \nabla_\mS F_k(\widehat\bx^{(k)})  \| ;
$$
thus, by requiring that
$$
   \| \nabla_\mS F_k(\widehat\bx^{(k)})  \| \le \theta^k \| \nabla_\mS F_k(\bx^{(0)}) \|
$$
and setting $\varepsilon_k = \theta^k (2/\gamma) \| \nabla_\mS F_k(\bx^{(0)}) \|$, we get
$$
    \| \widehat\bx^{(k)} -\overline\bx^{(k)} \|  \le \varepsilon_k.
$$
\par
\vspace*{-22pt}
\
\end{proof}

%{\color{red} Credo che si possa dire qualcosa anche sulla complessit\`a del metodo, in relazione all'accu\-ra\-tez\-za
%della risoluzione del problema quadratico. Cfr. FISTA e VMILA. Ispirazioni dai vecchi paper di Eisenstat e Walker?
%Convergenza metodo Vogel e Oman?}

\section{Numerical experiments\label{sec:experiments}}

ACQUIRE was implemented in MATLAB, using as inner solver the scaled gradient projection (SGP)
method proposed in~\cite{BonettiniZanellaZanni2009}, widely applied in the solution of image restoration problems.
In particular, the implementation of SGP provided by the \texttt{SPG-dec} MATLAB code,
available from \url{http://www.unife.it/prin/software}, was exploited.

The SGP iteration applied to problem~\eqref{eq:quadpb} reads:
$$
     \bz^{(j+1)} = \bz^{(j)} + \rho_j \left( P_{\mS, \, C_j^{-1}} \left( \bz^{(j)} - \nu_j C_j \nabla F_k (\bz^{(j)}) \right) - \bz^{(j)} \right) ,
$$
where $\bz^{(0)} = \bx^{(k)}$, $\rho_j$ is a line-search parameter ensuring that $\bz^{(j+1)}$ satisfies a sufficient decrease
condition, $\nu_j$ is a suitably chosen steplength, $C_j$ is a diagonal positive definite matrix with diagonal entries bounded
independently of $j$, and $P_{\mS, \, C_j^{-1}}$ is the projection operator onto $\mS$ with respect to the norm induced
by the matrix $C_j^{-1}$ (the dependence on $k$ has been neglected for simplicity).
Several efficient rules can be exploited to define the steplength $\nu_j$ for the quadratic problem~\eqref{eq:quadpb}  (see, e.g.,
\cite{BarzilaiBorwein1988,DeAsmundisdiSerafinoHagerToraldoZhang2014,DeAsmundisdiSerafinoLandi2016,
DeAsmundisdiSerafinoRiccioToraldo2013,Fletcher2012,FrassoldatiZanniZanghirati2008} and the references therein).
In particular, SGP uses a modification of the ABB$_{\rm min}$ adaptive Barzilai-Borwein steplength defined
in~\cite{FrassoldatiZanniZanghirati2008}, which takes into account the scaling matrix $C_j$ (see~\cite{BonettiniZanellaZanni2009}
for details); according to the analysis in~\cite{diSerafinoRuggieroToraldoZanni2018}, this steplength appears very effective.
Since the steplength is computed by taking into account a certain number, say $q$, of suitable previous steplengths,
we modified \texttt{SPG-dec} to avoid resetting the steplength each time the code was called, and to compute it by using $q$ steplengths
from the previous call. The diagonal scaling matrix $C_j$ was set as in~\cite[section 3.3]{ZanellaBoccacciZanniBertero2009} and
$q$ was chosen equal to its defaul value in \texttt{SPG-dec}, i.e., $q = 3$.
The SGP iterations were stopped according to~\eqref{eq:innerstop}. For all the tests considered here, we found experimentally
that $\theta = 0.1$ worked well in the first iterations of ACQUIRE; on the other hand, criterion \eqref{eq:innerstop} with this value
of $\theta$ soon becomes demanding, and fixing also a maximum number inner iterations was a natural choice. Setting this number
to 10 was effective in our experiments. Defaults were used for the remaining features of \texttt{SPG-dec}.

%Following~\cite{ZanellaBoccacciZanniBertero2009}, the diagonal scaling matrix was chosen as
% $$
% C_j = \diag(\bc^{(j)}), \quad c^{(j)}_l = \min \left\{ \zeta_{max}, \, \min \left\{ \frac{\bz^{(j)}_l}{w_l(\bz^{(j)})}, \, \zeta_{min} \right\} \right\} ,
% $$
% where
%$$
%   w_l (\bz^{(j)}) ) = 1 + \lambda \overline w_l (\bz^{(j)}) ),
% NOTA: $A^T e = e$, perci\`o uso 1 anzich\'e $(A^T e)_1$
%$$
%$$
%   \overline w_l (\bz^{(j)}) ) = 2 \phi_\mu' (\| D_i \bz^{(j)} \|^2) + ... (COMPLETARE)
%$$
%% and $\zeta_{min} = \min \left\lbrace v_j \;:\; v_j>0\right\rbrace$ and $\zeta_{max} = \max\left\lbrace v_j\right\rbrace $, where $ \bv=(v_j) $ is the vector
%% $$  \bv = \frac{\Vert \by \Vert_1}{\Vert \by+\bb \Vert_1} A^T\,\by.$$
% and $\zeta_{min}$ and $\zeta_{max}$ are suitable positive values, set equal to the default values in \texttt{SPG-dec}.  In this context,

The parameter $\gamma$ in~\eqref{eq:quadkl} was set equal to $10^{-5}$. The nonmonotone line search proposed
in~\cite{GrippoLamparielloLucidi1986} was implemented at line~\ref{alg:line_search}
of Algorithm~\ref{alg:acquire}, with memory length equal to 5, $\eta = 10^{-5}$, and $\delta = 0.5$.
ACQUIRE was stopped using the following criterion
\begin{equation}
\label{eq:stop}
\| \bx^{(k+1)} - \bx^{(k)} \| \le \text{Tol} \, \|\bx^{(k)} \| ,
\end{equation}
i.e., when the relative change in the restored image went below a certain threshold.

ACQUIRE was compared with five state-of-the-art methods: PDAL, SGP,  SPIRAL-TV, SPLIT BREGMAN and VMILA.
By PDAL we denote our MATLAB implementation of the primal-dual algorithm proposed in \cite[Algorithm~2]{WenChanZeng2016},
where we replaced the Chambolle-Pock algorithm \cite{ChambollePock2011} by the more efficient Primal Dual Algorithm with Linesearch introduced
in~\cite{MalitskyPock2018}. Concerning the parameters of PDAL, following \cite[Section~6]{MalitskyPock2018} we set $ \mu=0.7$,
$\delta=0.99$ and  $\beta = 25$. The initial steplength was chosen as $ \tau = \sqrt{2/\omega} $, where $\omega$ is an underestimate of $\|M^T M\|$
%(computed by means of a random vector),
and $M = \left[ A^T \; D_1^T \; \ldots \; D_n^T \right]^T $ is the matrix linking the primal and dual variables.
SPIRAL-TV is the proximal-gradient method presented in~\cite{HarmanyMarciaWillet2012}; a MATLAB
implementation of it is available from \url{http://drz.ac/code/}.
By SPLIT BREGMAN we denote a version of the method proposed in \cite{GoldsteinOsher2009},
which was specialized for problem~\eqref{eq:poispb} \cite{Getreuer2012}
and implemented in the MATLAB code \texttt{tvdeconv} available from \url{http://dev.ipol.im/~getreuer/code/}.
Finally, VMILA is the variable-metric inexact line-search proximal-gradient method described in~\cite{BonettiniLorisPortaPrato2016},
whose MATLAB implementation can be found at
\url{http://www.oasis.unimore.it/site/home/software.html}. In all the methods, the stopping criterion~\eqref{eq:stop}
was applied. SGP was run with the same setting of parameters used to solve the subproblems in ACQUIRE.
For SPIRAL-TV, SPLIT BREGMAN and VMILA, the default values of the parameters were used.

PDAL, SPIRAL-TV, SPLIT BREGMAN and VMILA do not require any smooth approximation of TV and were
run directly on problem~\eqref{eq:poispb}. Therefore, our comparison also provides some insight into
the effects of using a smoothed version of TV. ACQUIRE was run with and without the flux constraint, i.e., using
both feasible sets $\mS_1$ and $\mS_2$ -- see~\eqref{eq:constr1} and \eqref{eq:constr2}. However, since
the use of the flux constraint did not lead to any significant difference
in the restored images, and this constraint was not available in the implementations of  SPIRAL-TV, SPLIT BREGMAN
and VMILA, we report only the results for $\mS = \mS_1$.

As already noted, when the matrix $A$ represents a convolution, the matrix-vector products involving the matrices $A$
and $A^T$ can be performed by using fast algorithms. This is the case for all the experiments considered in this work.
Since periodic boundary conditions were considered for all the images used as test problems, the matrix-vector products were performed
by exploiting the MATLAB FFT functions \texttt{fft2} and \texttt{ifft2}.

In order to build the test problems used in the experiments, four reference images were chosen:
\emph{cameraman}, \emph{micro}, \emph{phantom} and \emph{satellite},
shown in Figure~\ref{fig:test_images}. The cameraman image,
available in the MATLAB Image Processing Toolbox,
is widely used in the literature since it contains both sharp edges and flat regions
and presents a nice mixture of smooth and nonsmooth regions;
%galaxy is the image of the spiral galaxy NGC6946 containing both bright stars and nebulas;
micro is the confocal microscopy phantom described in~\cite{WillettNowak2003};
phantom is the famous Shepp-Logan brain phantom described in~\cite{SheppLogan1974};
finally, the satellite image comes from the RestoreTools package~\cite{NagyPalmerPerrone2004}.
The size of cameraman, phantom and satellite is $256 \times 256$, while the size of
micro is $128 \times 128$.

A first set of test problems, T1, was obtained by convolving each reference image with a Gaussian PSF
and corrupting the resulting image with Poisson noise. A further set of test problems, T2, was built by
convolving some of the images with a motion blur PSF and an out-of-focus PSF, and then introducing Poisson noise.
Details about the PSFs and the Poisson noise are given in Subsections~\ref{sec:results1} and~\ref{sec:results2},
where the results of numerical experiments performed by using the corrupted images are also reported.

All the experiments were carried out on a 2.5 GHz Intel Core i7 processors with 16 GB of RAM,
4 MB of L3 cache and the macOS 10.13.6 operating system, using MATLAB R2018b.

\subsection{Results on images with Gaussian blur\label{sec:results1}}

The Gaussian blur PSF for constructing the test set T1 was computed by using
the function \texttt{psfGauss} from~\cite{NagyPalmerPerrone2004},
choosing the variance $\sigma$ as specified in Table~\ref{tab:test_pbs}.
In order to take into account the existence of some background emission,
$10^{-10}$ was added to all the pixels of the blurred image; obviously, the vector $\bb$ in $D_{KL}(\bx)$
was set as $\bb = 10^{-10} \be$. The Poisson noise was introduced with the
function \texttt{imnoise} from the MATLAB Image Processing Toolbox. Note that for this type of noise,
which affects the photon counting process, the Signal-to-Noise Ratio (SNR) is usually estimated by
\begin{equation*}
    %\text{SNR}=10\log_{10}\left(\frac{\text{photon number in} f_\text{exact}}{\text{photons number in} f_\text{exact}
    %+ \text{photons number in the background}} \right)
    \text{SNR} = 10\log_{10}\left(\frac{N_\text{exact}}{\sqrt{N_\text{exact} + N_\text{background}}} \right),
\end{equation*}
where $N_\text{exact}$ and $N_\text{background}$ are the total number of photons in the exact image to be recovered
and in the background term, respectively. Therefore, in order to obtain noisy and blurred images with SNR equal to 35 and 40,
the intensities of the reference images were suitably pre-scaled. The resulting images are shown
in Figures~\ref{fig:cameraman}-\ref{fig:satellite} (left columns).

The regularization parameter $\lambda$ was set by trial and error, as described next.
The search for a suitable value of $\lambda$ was carried out separately
for the minimization problem~\eqref{eq:poispb}, which uses the original TV, and the minimization
problem~\eqref{eq:poispb2}, which uses the smoothed TV. In the latter case, ACQUIRE was run
several times on each test image, for 25 seconds and with Tol $= 0$ (see~\eqref{eq:stop}),
slowly varying the value of $\lambda$ at each execution. The value of $\lambda$ corresponding
to the smallest relative error at the last iteration, was chosen to perform the experiments with ACQUIRE
and SGP.  Note that, by running SGP for more than 25 seconds, we also verified that the selected value
of $\lambda$ was suited to SGP too. The relative error was
computed as $\| \bx^{(k)} - \bx^* \| / \| \bx^* \|$, where $\bx^*$ denotes the original image.
The values of $\lambda$ for problem~\eqref{eq:poispb}
were set using the same strategy applied to~\eqref{eq:poispb2}. In this case, instead of ACQUIRE,
for each image we used the method that appeared more efficient among PDAL, SPIRAL-TV, SPLIT-BREGMAN
and VMILA, on the basis of preliminary experiments. All the values of $\lambda$ are reported in Table~\ref{tab:test_pbs}.
The same value of the regularization parameter was determined for both the original and the smoothed TV,
except for the satellite image; however, very close values of $\lambda$ were obtained in this case.

The parameter $\mu$ in the smoothed version of TV was set as $\mu = 10^{-2}$, which, by numerical experiments,
appeared to achieve a good tradeoff between approximation accuracy and computational effort, for all the test problems.
Indeed, as the value of $\mu$ decreases, $TV_\mu$ becomes a more accurate approximation of $TV$, but the
condition number of its Hessian increases. For both versions of TV,  each corrupted image was scaled by division
by its largest intensity value before applying any method; the scaled image was also used as starting guess, i.e., we set
$\bx^{(0)} = \by$. We also performed experiments by setting $\bx^{(0)}$ as the vector with entries equal to the
flux of the scaled image divided by the number of pixels of the image, but we could not see any significant difference in the results.

ACQUIRE was compared with all the other methods on the test problems previously described,
in terms of accuracy and execution time.
Six values of Tol were considered, Tol $= 10^{-2}, 10^{-3}, \ldots, 10^{-7}$,
with the aim of assessing the behavior of ACQUIRE with different accuracy requirements
and getting useful information for the effective use of an automatic stopping rule.
A maximum execution time of 25 seconds was also set for all the methods.

Figures~\ref{fig:compar35} and \ref{fig:compar40} show the relative errors and the execution times of each method,
in seconds, versus the stopping tolerances, for the problems with SNR $= 35$ and SNR $= 40$, respectively.
The images obtained with ACQUIRE and corresponding to the smallest
errors are shown in Figures~\ref{fig:cameraman}-\ref{fig:satellite} (right columns).
Further details concerning all the methods
are given in Tables~\ref{tab:compar35_minerr} and \ref{tab:compar40_minerr}, where we report the
smallest errors, the iterations performed to achieve them, the corresponding execution times and tolerances,
and the values of MSSIM for the restored images corresponding to the smallest errors.  MSSIM is
a structural similarity measure index~\cite{WangBovikSheikhSimoncelli2004}
which is related to the perceived visual quality of the image; the higher its value, the better
the perceived similarity between the restored and original images.

We see that ACQUIRE generally does not need small tolerances to achieve small errors, because of its fast progress in the first
iterations, which produces large changes in the iterate. We note that in four test cases it reaches its minimum error
with Tol $= 10^{-3}$; this is consistent with the exploitation of second-order information to build the quadratic
model at each iteration. SGP generally achieves errors comparable with those of ACQUIRE, but its progress at each
iteration is slower, and hence it often requires smaller tolerances to avoid stopping prematurely. On the other hand,
a single iteration of ACQUIRE requires more time than an iteration of SGP, and the former method may be either
faster or slower than the latter in obtaining small errors. PDAL is able to achieve errors comparable with those of
ACQUIRE, but it generally requires smaller tolerances and larger times. VMILA is very efficient on both
instances of the cameraman problem and on the phantom problem with SNR $=35$, where it is faster than ACQUIRE
or comparable with it. However, there are some problems where VMILA makes very little progress in the first
iterations, leading to very premature stops, as shown by the almost constant execution times in the pictures.
The remaining methods are generally less efficient than the previous ones, because of their very slow progress in
reducing the error. We note that the errors of ACQUIRE and SGP show a light semiconvergence for some problems.
We were not able to completely remove this behavior by increasing the regularization parameter without significantly
deteriorating the visual quality of the image and decided to keep the value of $\lambda$ determined by the
procedure previously described. Finally, we note that the values of MSSIM corresponding to ACQUIRE
confirm that in most cases this method is able to provide better or similar quality images in comparison with
the other methods.

\subsection{Results on images with moving and out-of-focus blurs\label{sec:results2}}

In order to understand if the previous behavior of ACQUIRE also holds for blurs different from the Gaussian
one, and to further compare ACQUIRE with the other methods, we built the test set T2. We introduced in two reference images,
cameraman and satellite, the motion blur and and the out-of-focus blur, which simulate the linear motion of a camera
and the out-of-focus effect, respectively. Both blurs were computed by using the Matlab function \texttt{fspecial}. 
Poisson noise with SNR equal to 35 and 40 was introduced in the blurred images, as in the case of Gaussian blur.
The length and the angle (in degrees) of the motion, \texttt{len} and $\varphi$, and the radius of the disk kernel
for the out-of-focus effect, \texttt{rad}, are specified in Table~\ref{tab:test_mot_of}. The values of $\lambda$,
obtained with the procedure described in Subsection~\ref{sec:results1}, are reported in the same table.
In this case, small differences can be observed between the values of the regularization parameter
corresponding to the original and the smoothed TV. The parameter $\mu$ in the smoothed version of TV
was set again as $\mu = 10^{-2}$, on the basis of numerical experiments. Each noisy and blurred image
was scaled as in the case of Gaussian blur. The vector with entries equal to the flux of the scaled image
divided by the number of pixels of the image was used as starting guess, because, with the motion
and out-of-focus blurs, this choice generally appeared more effective than the choice of the scaled image.

The error and time histories shown in Figures~\ref{fig:compar_mot} and~\ref{fig:compar_of}
confirm the behavior of ACQUIRE: it is able to strongly reduce the error in the first iterations and hence,
although its single iteration is usually more expensive than a single iterations of the other methods, it
is competitive with those methods. Furthermore, ACQUIRE allows an effective use
of an automatic stopping rule, avoiding premature stopping, which may happen with other methods.
This is confirmed by the data in Tables~\ref{tab:compar_mot_minerr} and \ref{tab:compar_of_minerr},
which report, for all the methods, the smallest errors and the corresponding MSSIM values, number of iterations,
execution times and tolerances. The images restored by ACQUIRE and corresponding to the smallest errors are shown
in the right columns of Figures~\ref{fig:cameraman_mot}-\ref{fig:satellite_of}).

\section{Conclusions\label{sec:conclusions}}

We proposed ACQUIRE, a method for TV-based restoration of images corrupted by Poisson noise, modeled
by~\eqref{eq:poispb}. ACQUIRE is a line-search method which considers a smoothed version of TV and computes
the search directions by minimizing quadratic models built by exploiting second-order information about the
objective function, which is usually not taken into account in methods for problem~\eqref{eq:poispb}.
We proved that the sequence generated by our method has a subsequence converging to a minimizer of
the smoothed problem~\eqref{eq:poispb2} and that any limit point is a minimizer; furthermore, if the
problem is strictly convex, the whole sequence is convergent.
We note that convergence holds without requiring the exact minimization of the quadratic models;
low accuracy in this minimization can be used in practice, as shown by the numerical results.

Computational experiments on reference test cases showed that the exploitation of second-order information
is beneficial, since it generally leads to a significant reduction of the reconstruction error in the first iterations,
Furthermore, the capability of achieving a tradeoff between accuracy and efficiency makes ACQUIRE
competitive with well-established methods for TV-based Poisson image restoration.

Finally, we observe that our approach can be extended to other regularization models,
such as high-order Total Variation \cite{LiuHuangLvWang2017,PapafitsorosSchonlieb2014} and Total Generalized Variation \cite{BrediesHoller2014,BrediesKunischPock2010}, which have been proposed to reduce the staircase effect of TV and
retain the fine details of the image.

\section*{Acknowledgments}
We wish to thank the anonymous reviewers for their insightful comments and useful suggestions, which helped
us improve the quality of our work.

\clearpage

%%%%%%%%%%%%%%%%%%%%%%%%%%%%%%%%%%%%%%%%%%%%
% FIGURE

\begin{figure}[b!]
	\vspace*{2mm}
	\begin{center}
		\hspace*{-.55cm}
		\begin{tabular}{ccc}
			\includegraphics[width=.47\textwidth]{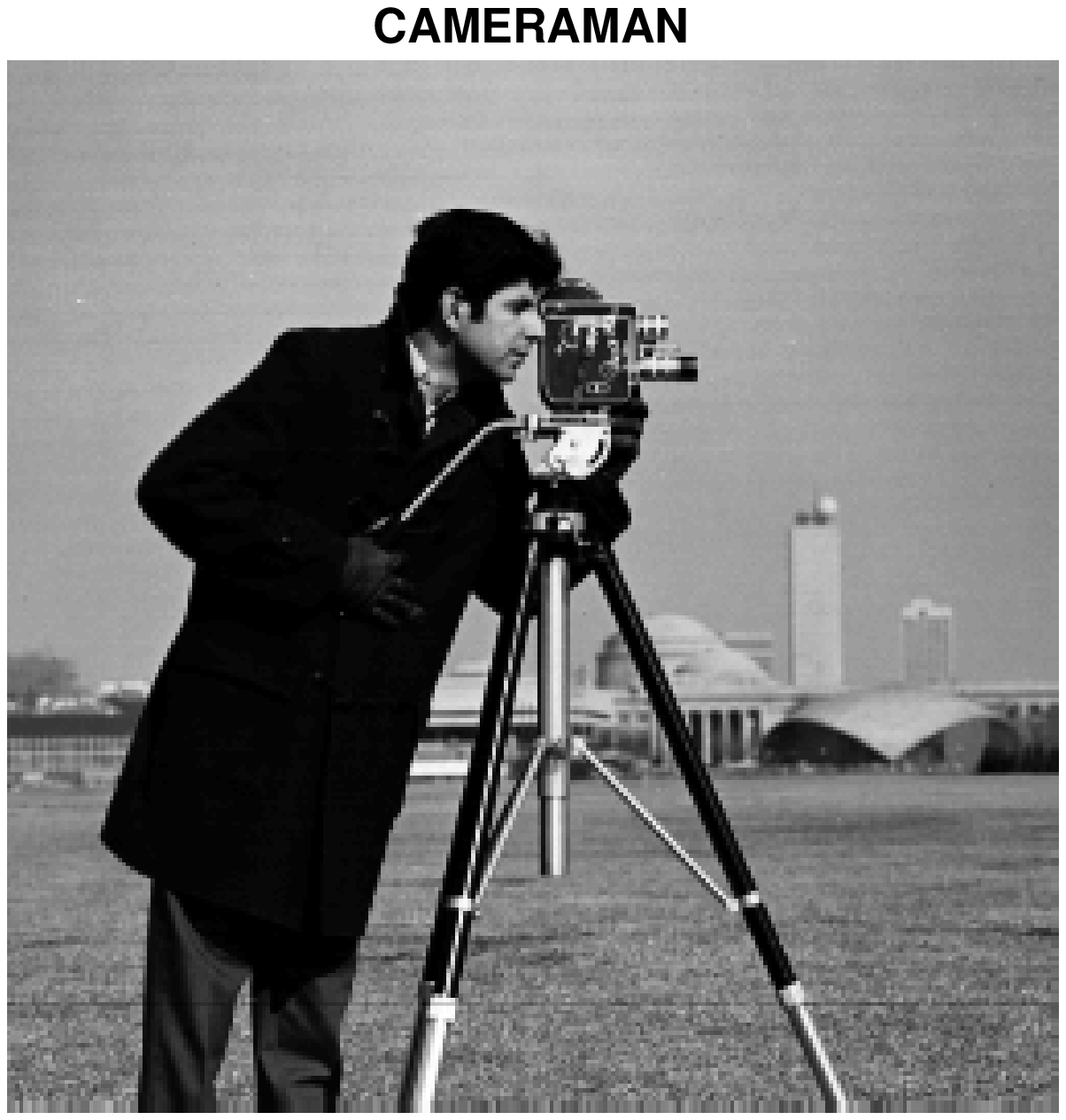}      &
			\hspace*{-1.5cm}
			\includegraphics[width=.47\textwidth]{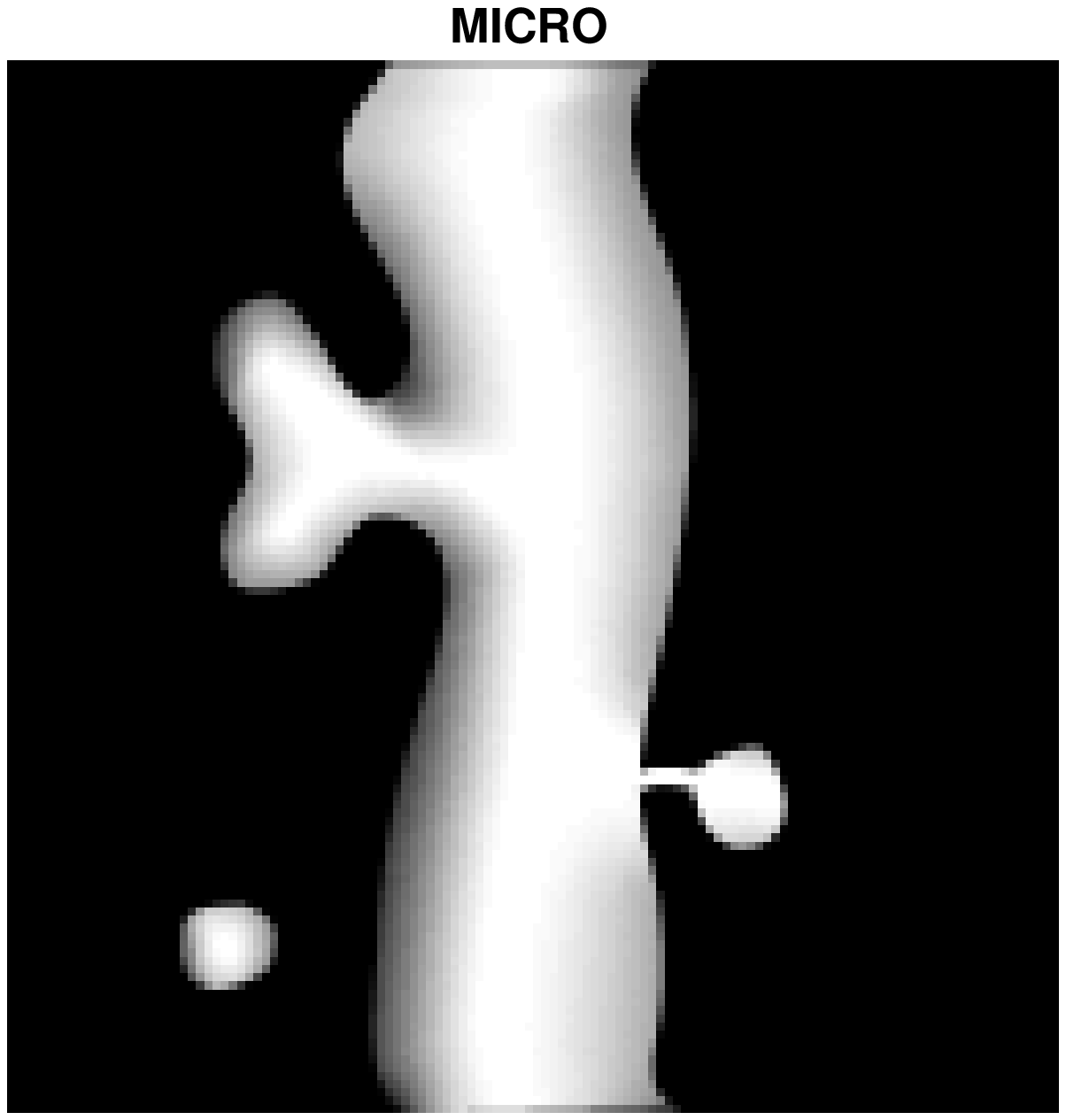}                \\[-4mm]
			\includegraphics[width=.47\textwidth]{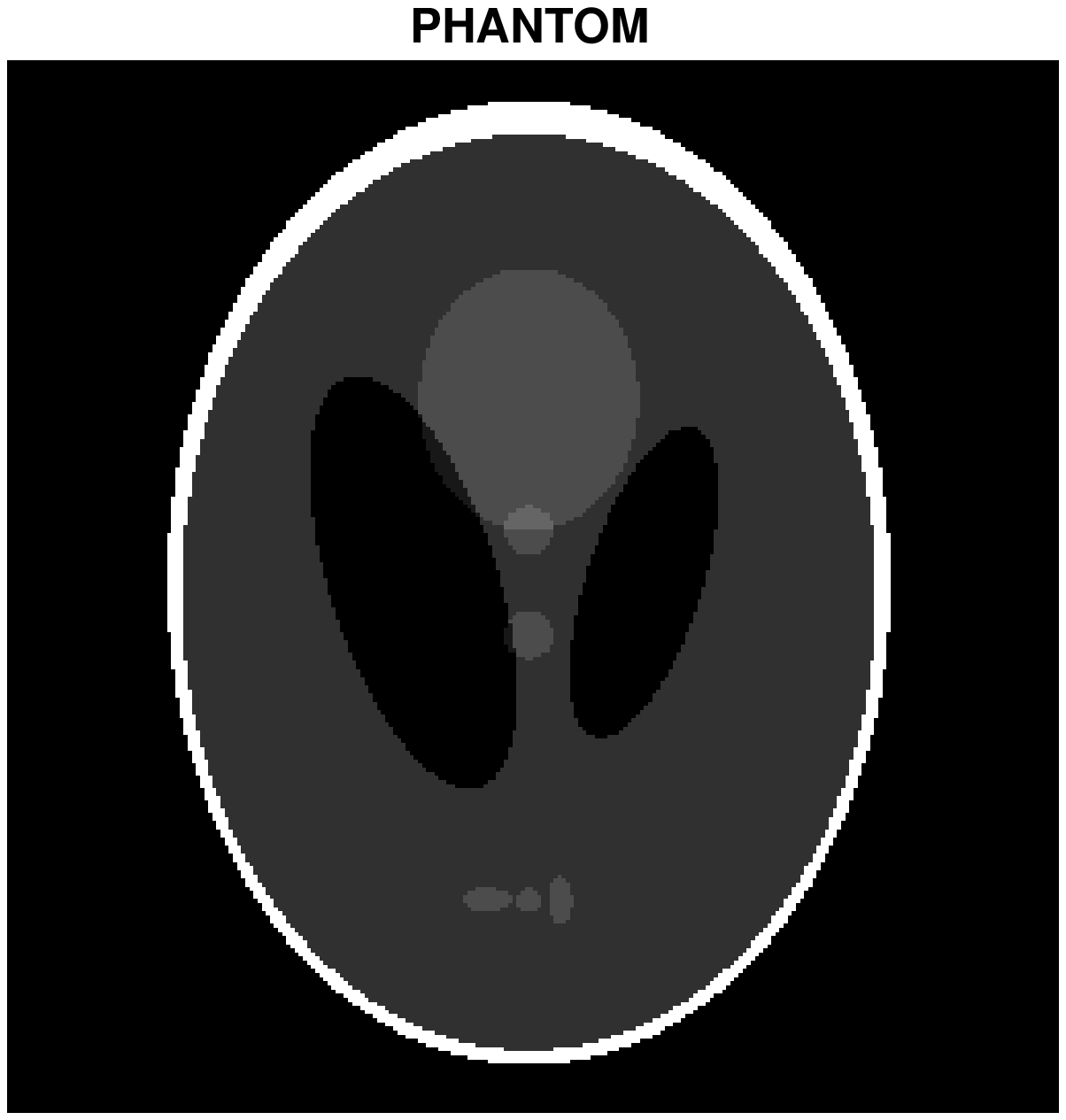}           &
			\hspace*{-1.5cm}
			\includegraphics[width=.47\textwidth]{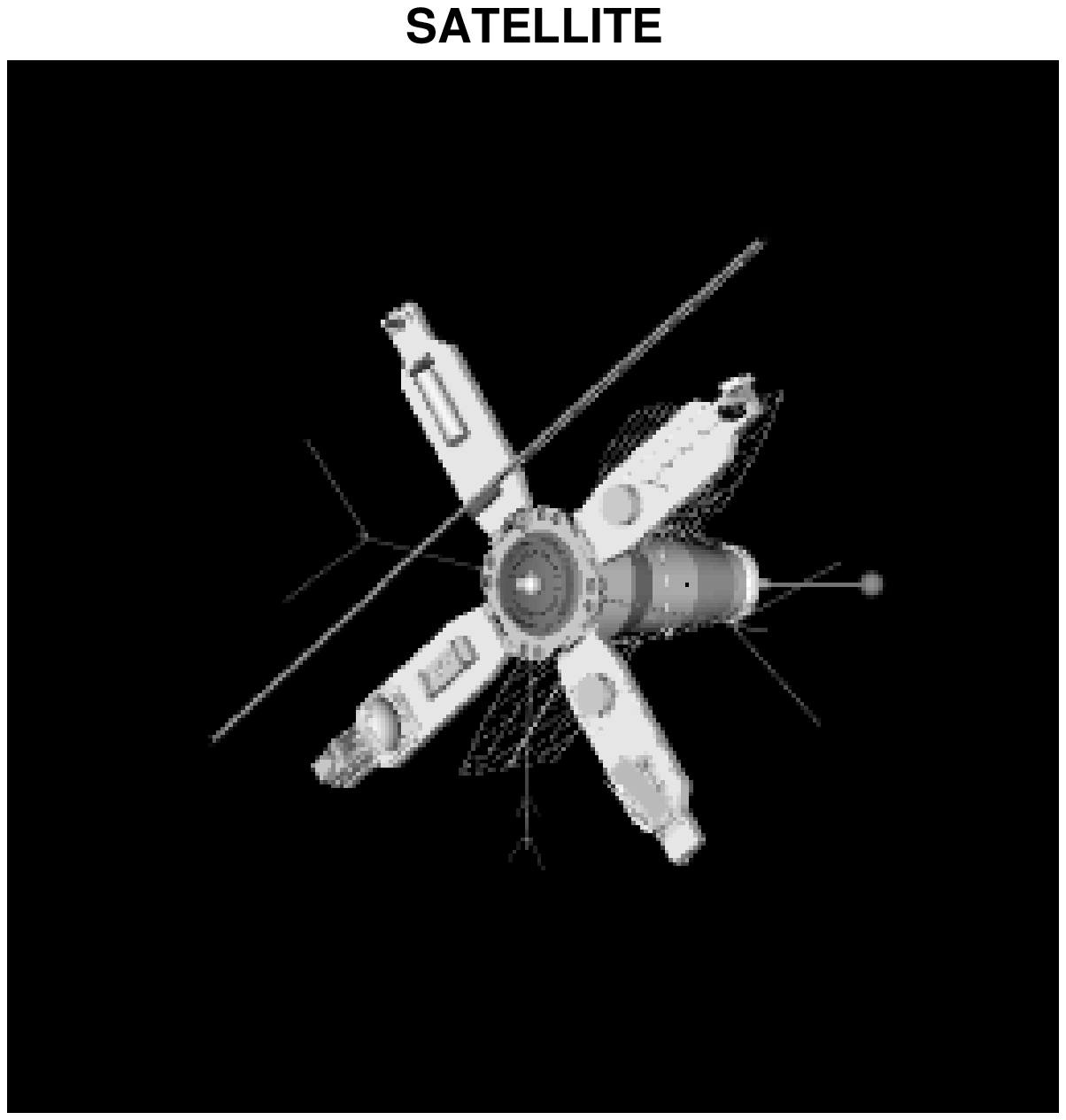}
		\end{tabular}
		\vspace*{-6mm}
		\caption{Reference images:
			cameraman, micro, phantom and satellite.\label{fig:test_images}}
	\end{center}
\end{figure}

\clearpage

\begin{figure}[t!]
	\vspace*{2mm}
	\begin{center}
		\hspace*{-.55cm}
		\begin{tabular}{ccc}
			\includegraphics[width=.47\textwidth]{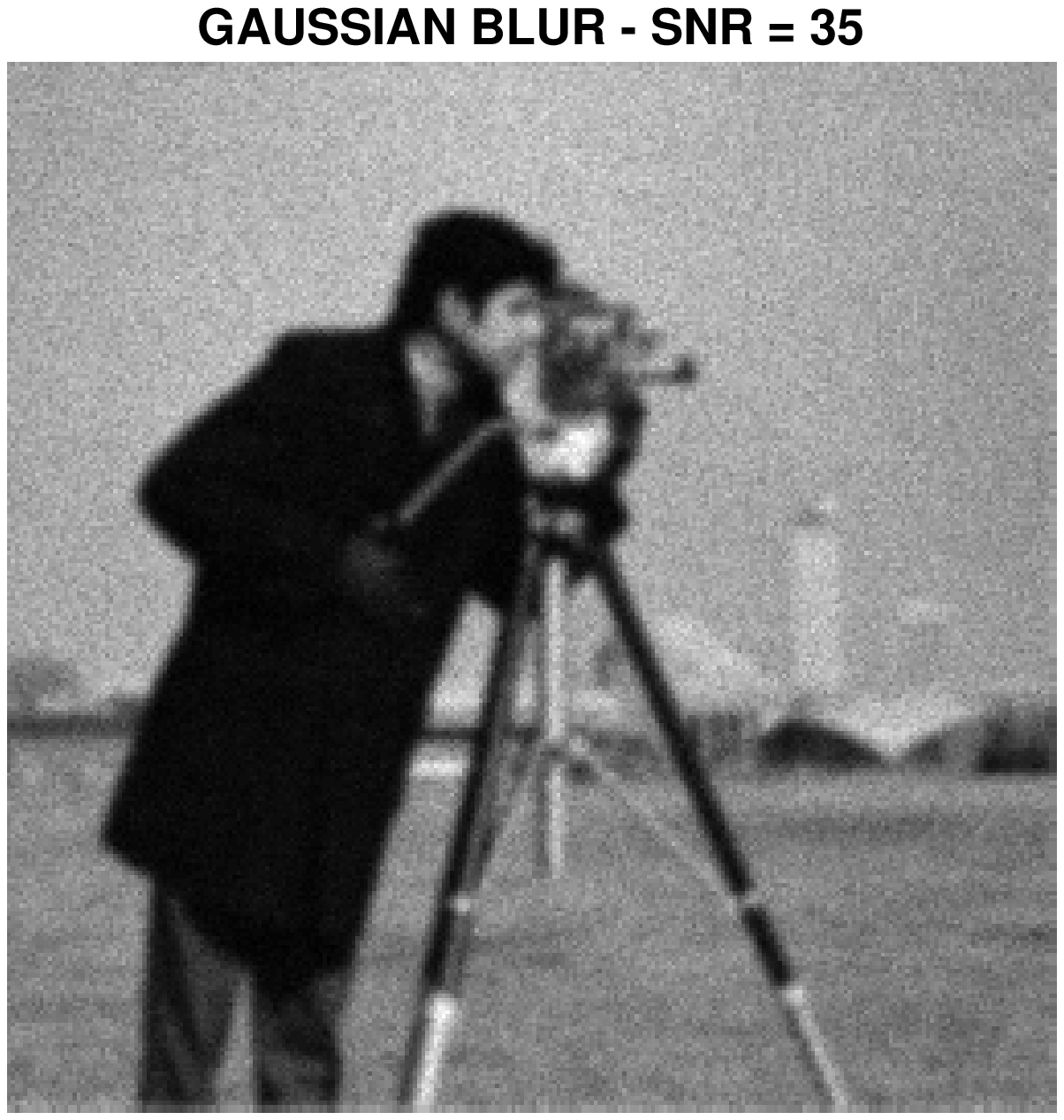}      &
			\hspace*{-1.5cm}
			\includegraphics[width=.47\textwidth]{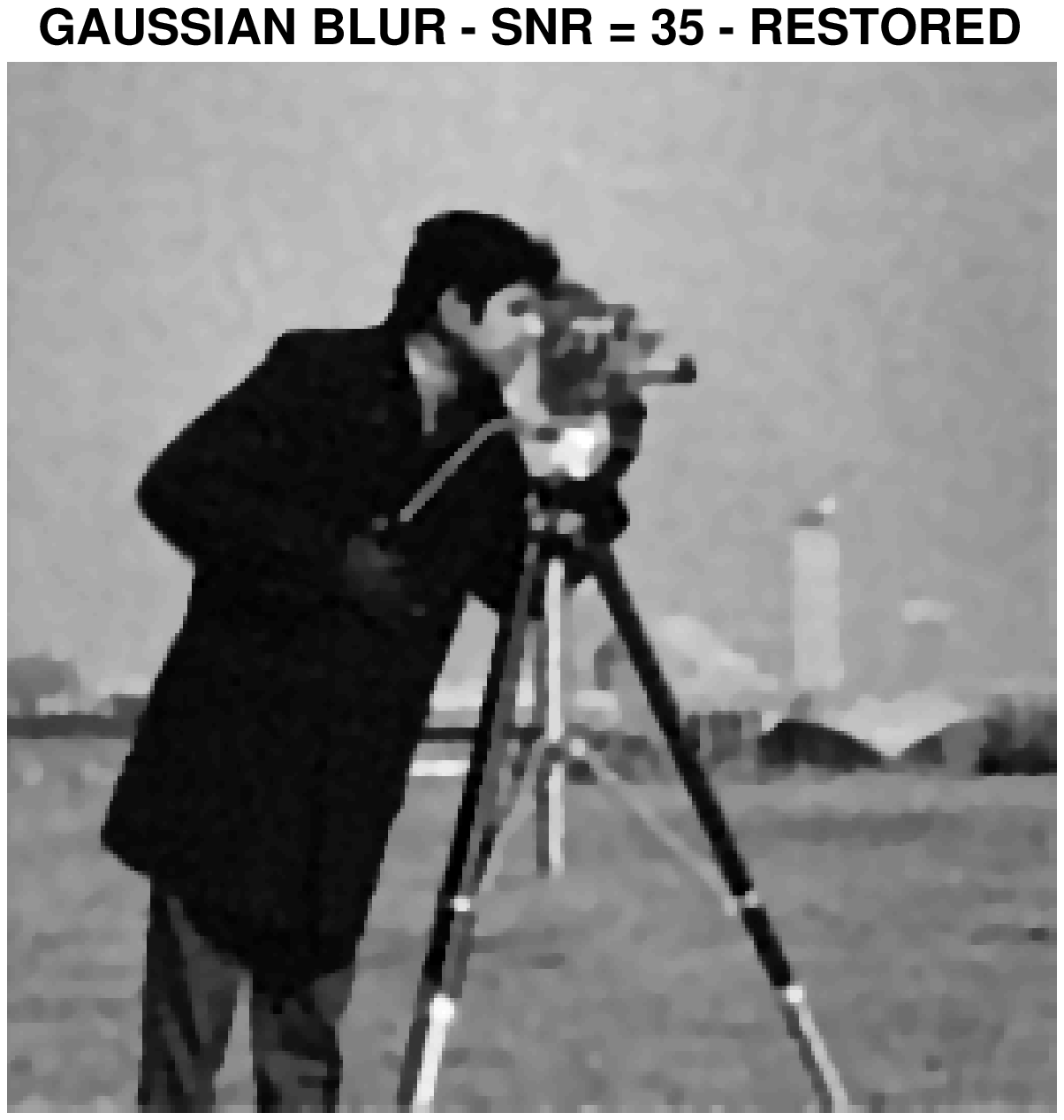}  \\[-4mm]
			\includegraphics[width=.47\textwidth]{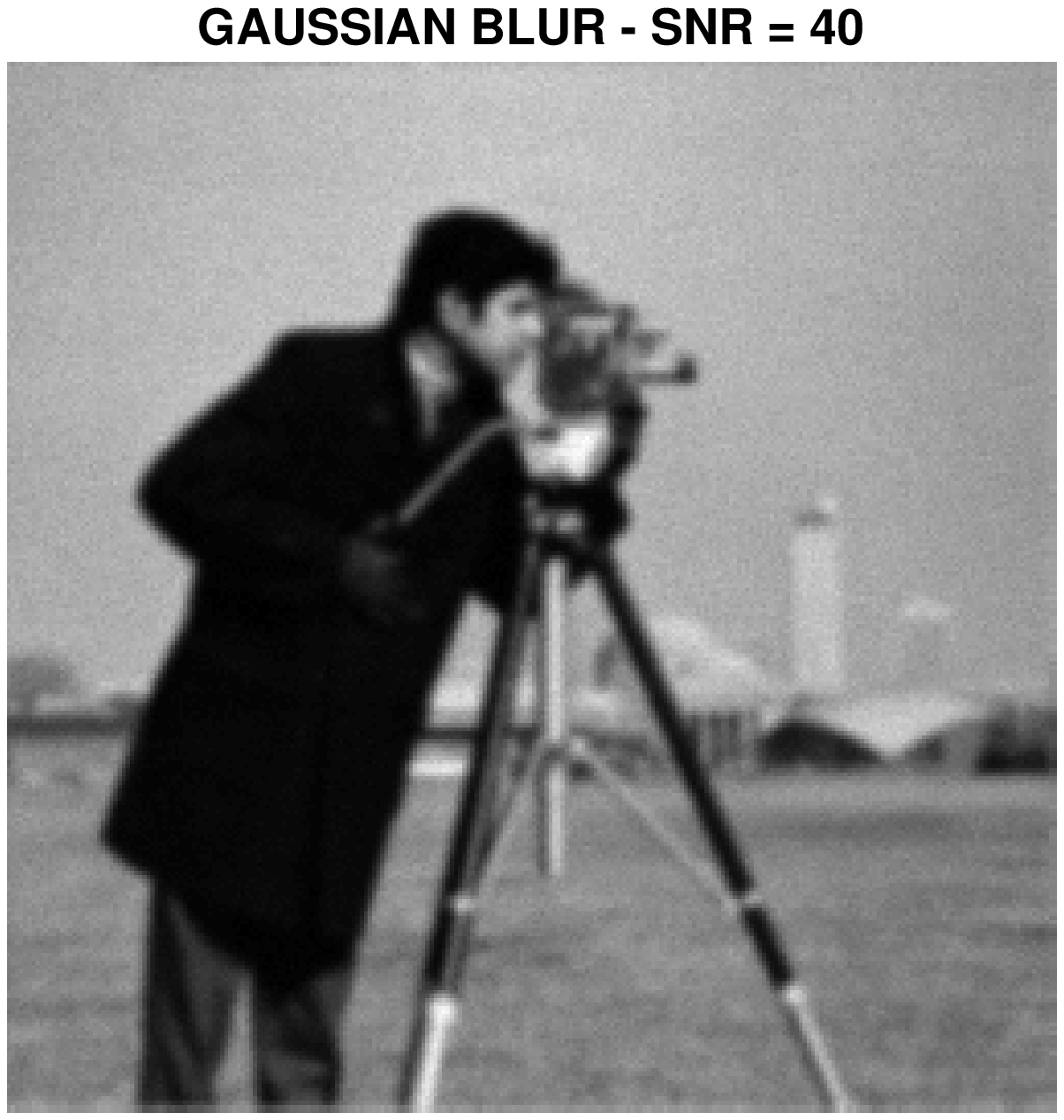}       &
			\hspace*{-1.5cm}
			\includegraphics[width=.47\textwidth]{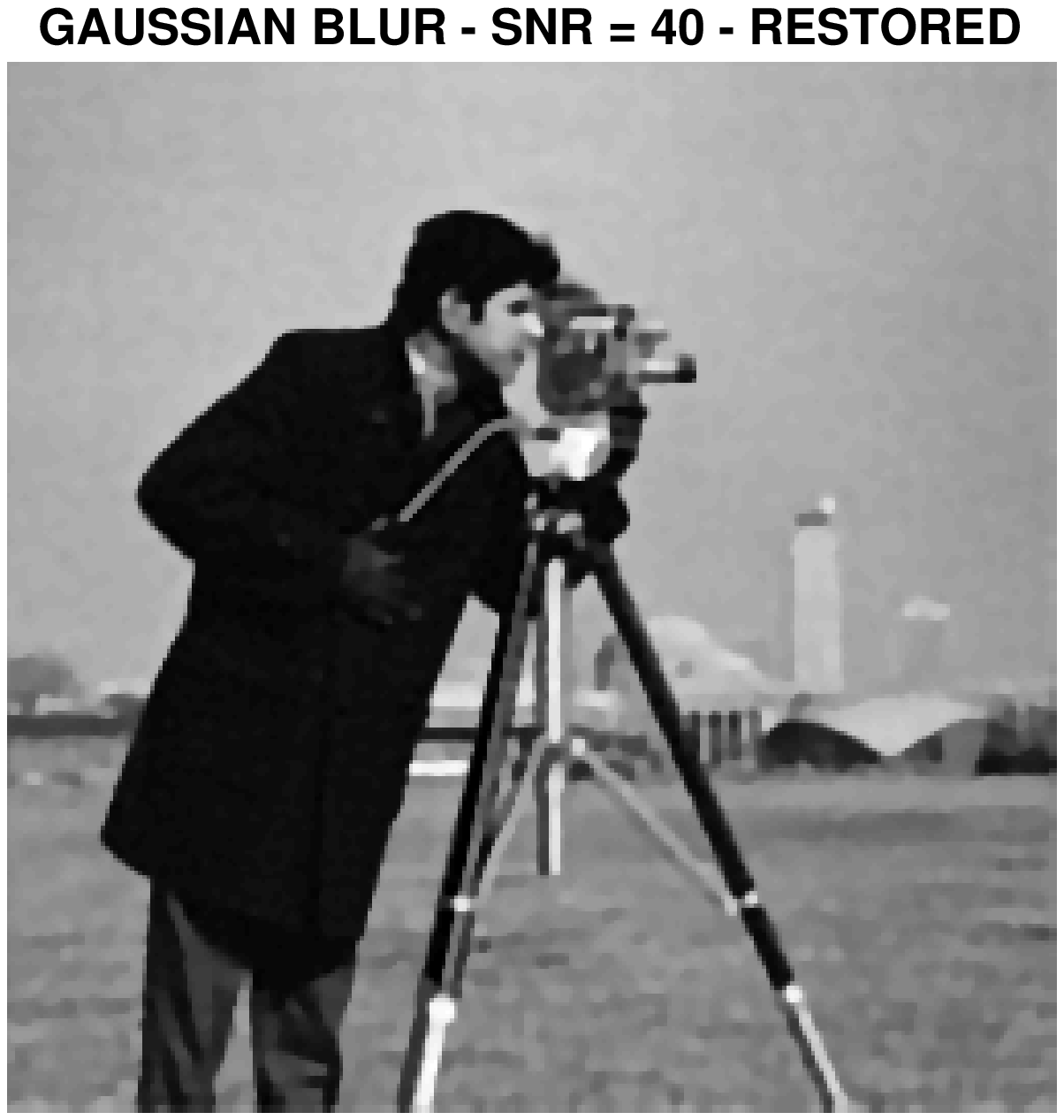}
		\end{tabular}
		\vspace*{-6mm}
		\caption{Cameraman: images corrupted by Gaussian blur and Poisson noise (left) and images
		 restored by ACQUIRE (right).  %Top: SNR = 35, bottom: SNR = 40.
		 \label{fig:cameraman}}
	\end{center}
\end{figure}

\begin{figure}[b!]
	\vspace*{2mm}
	\begin{center}
		\hspace*{-.55cm}
		\begin{tabular}{ccc}
			\includegraphics[width=.47\textwidth]{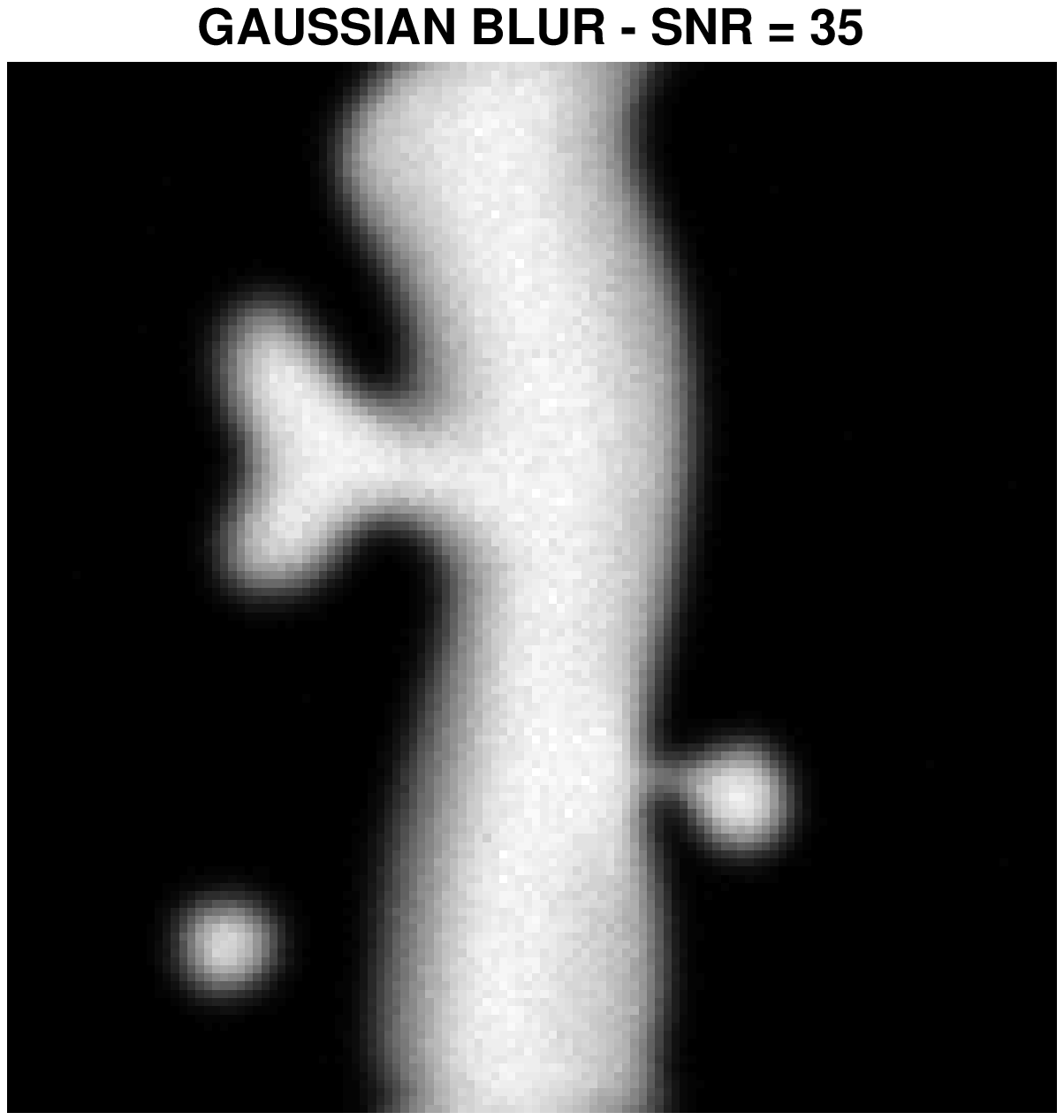}      &
			\hspace*{-1.5cm}
			\includegraphics[width=.47\textwidth]{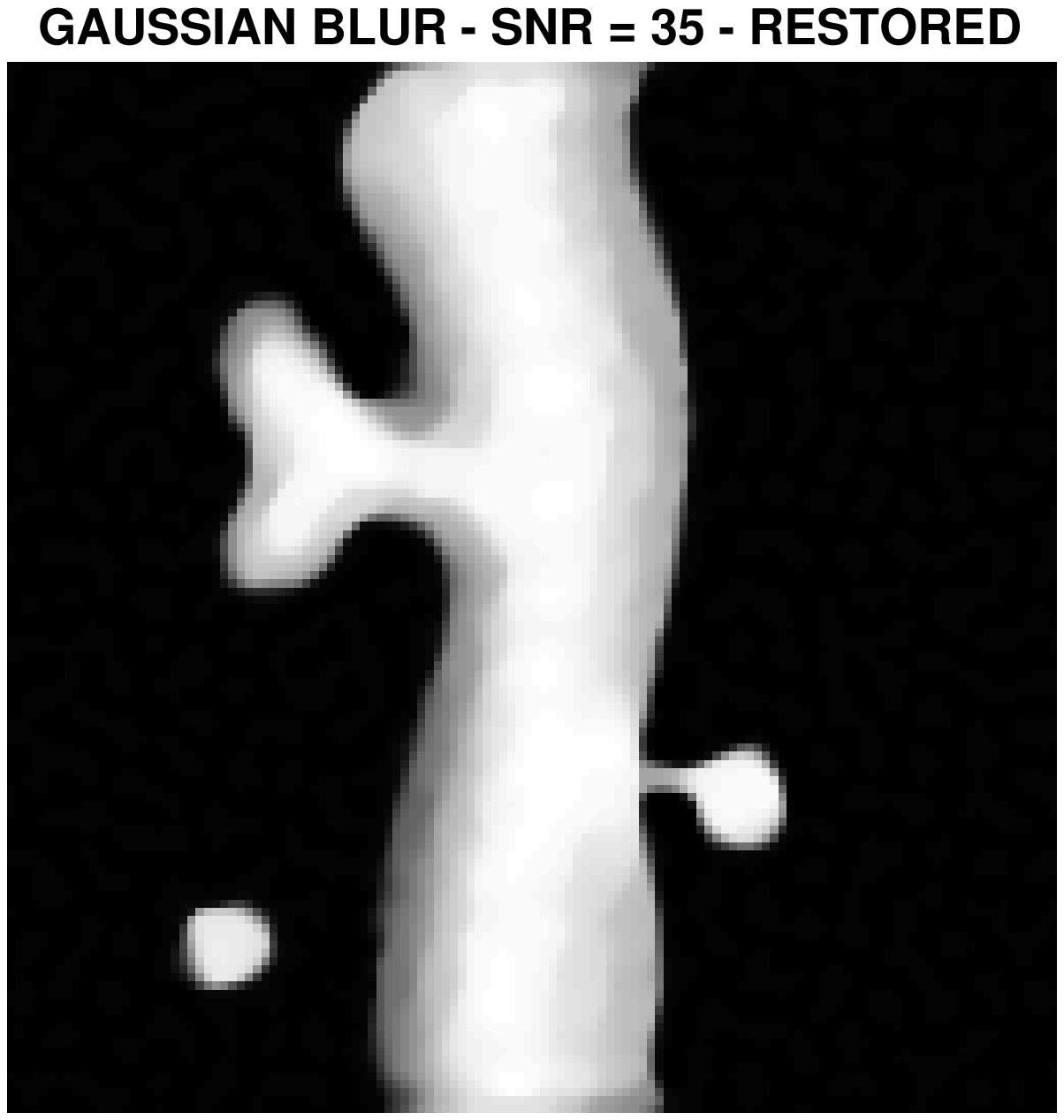}  \\[-4mm]
			\includegraphics[width=.47\textwidth]{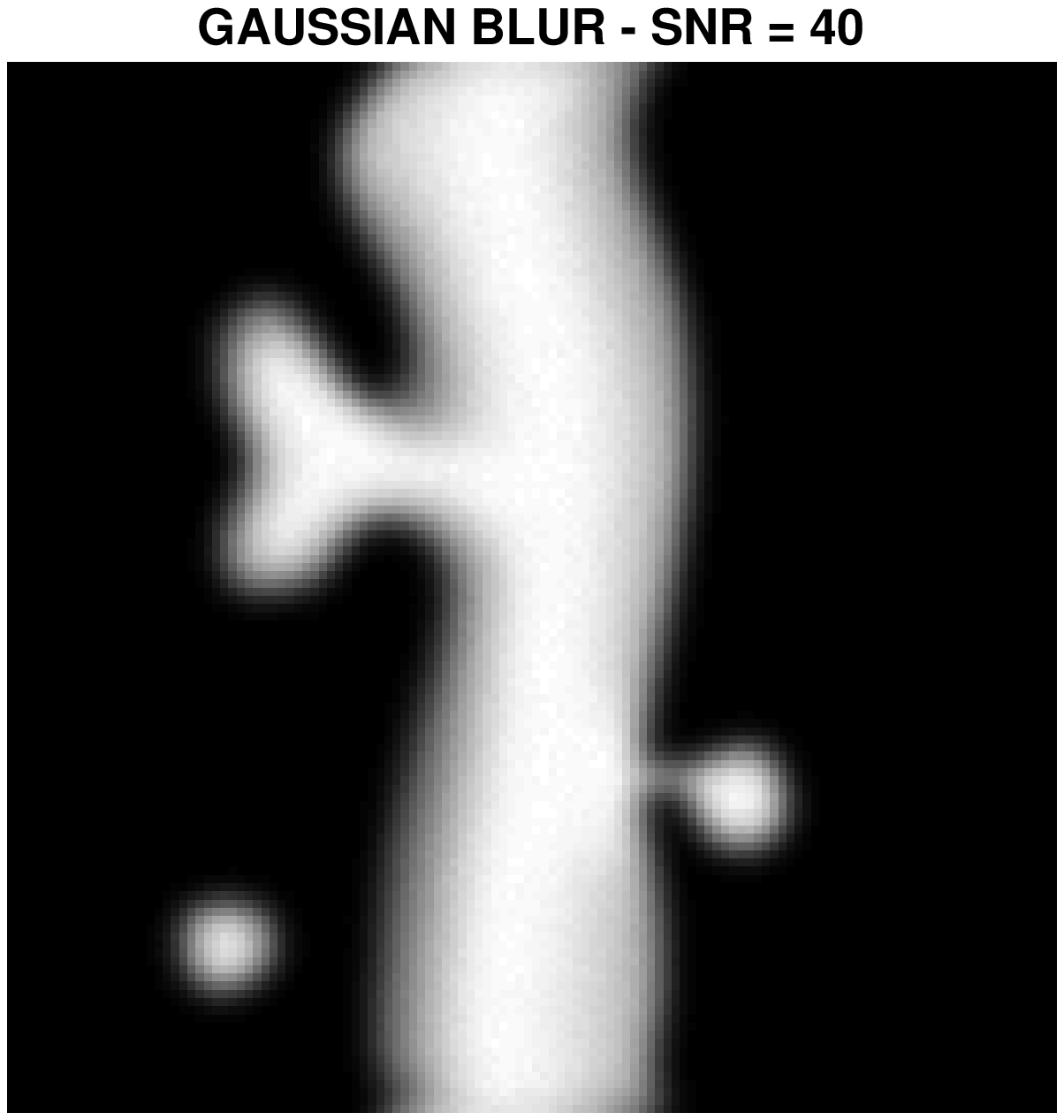}       &
			\hspace*{-1.5cm}
			\includegraphics[width=.47\textwidth]{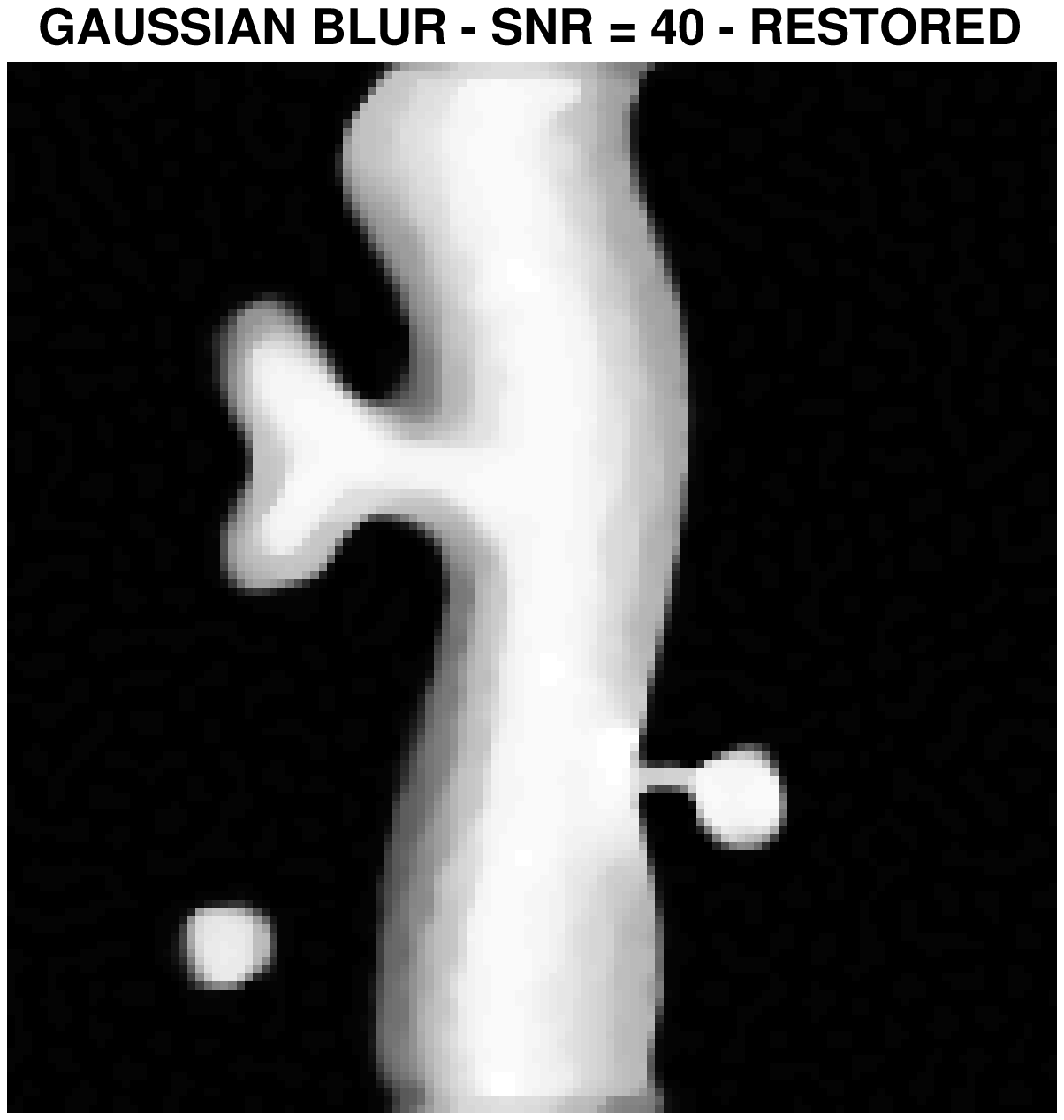}
		\end{tabular}
		\vspace*{-6mm}
		\caption{Micro: images corrupted by Gaussian blur and Poisson noise (left) and images
		 restored by ACQUIRE (right).  %Top: SNR = 35, bottom: SNR = 40.
		 \label{fig:micro}}
	\end{center}
\end{figure}

\clearpage

\begin{figure}[t!]
	\vspace*{2mm}
	\begin{center}
		\hspace*{-.55cm}
		\begin{tabular}{ccc}
			\includegraphics[width=.47\textwidth]{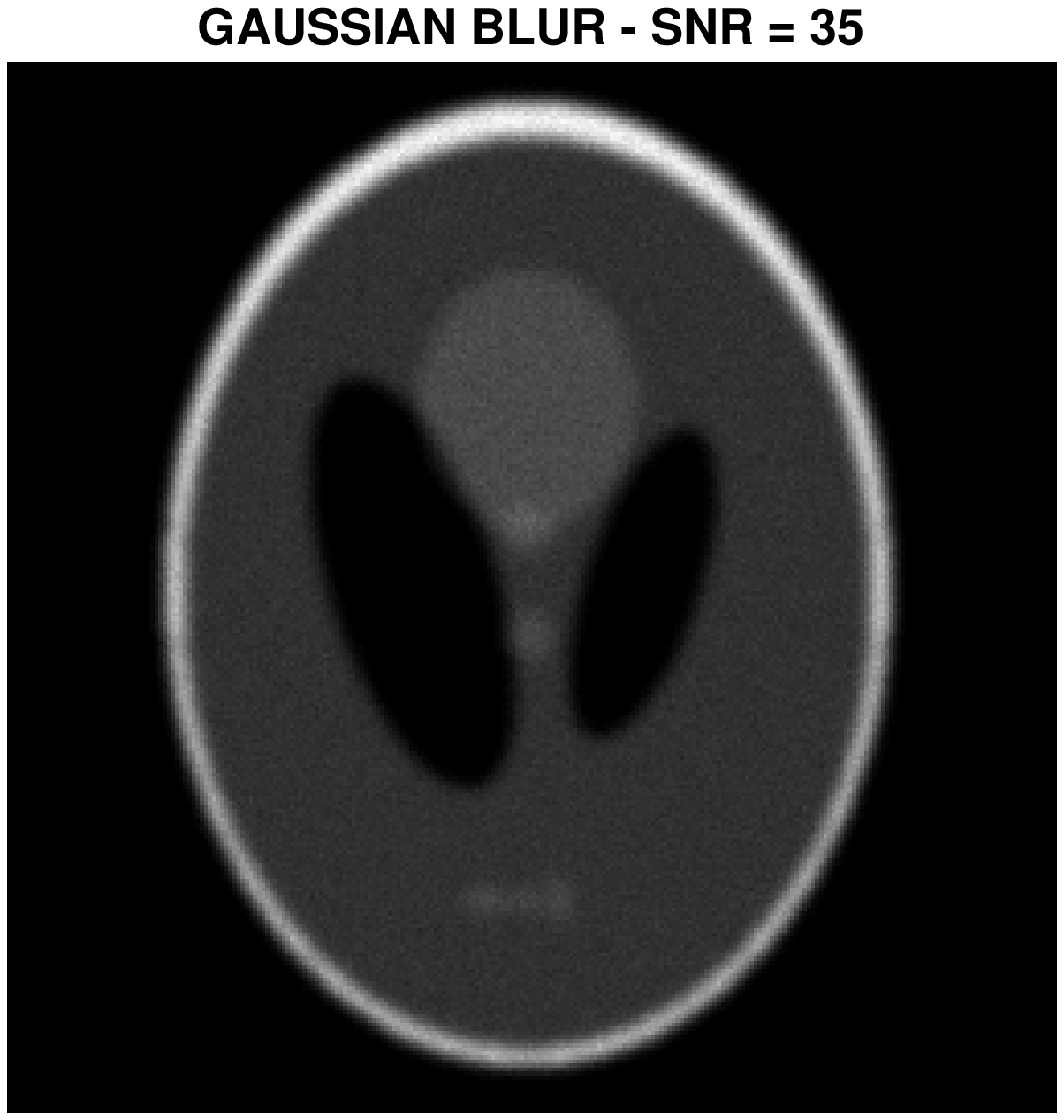}      &
			\hspace*{-1.5cm}
			\includegraphics[width=.47\textwidth]{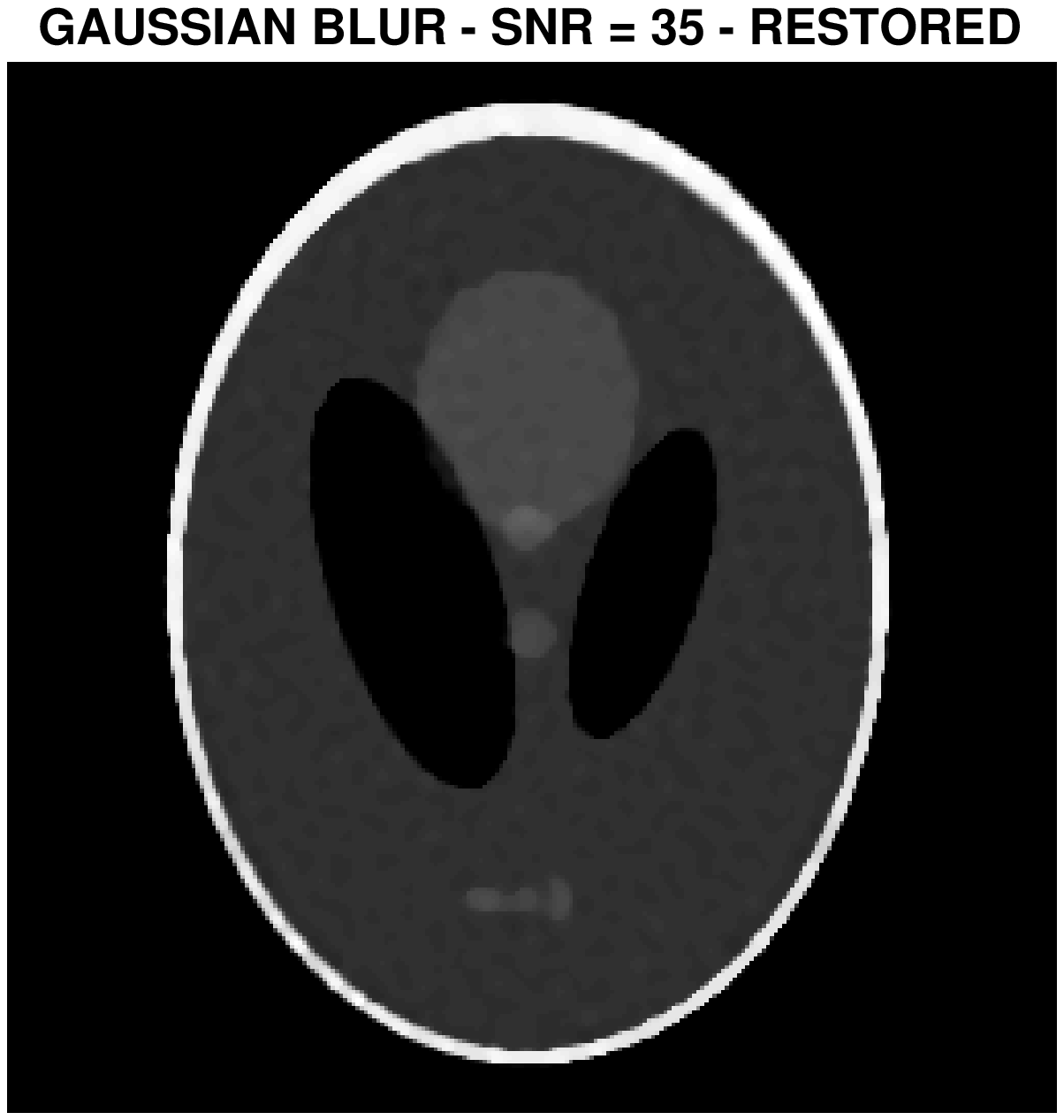}  \\[-4mm]
			\includegraphics[width=.47\textwidth]{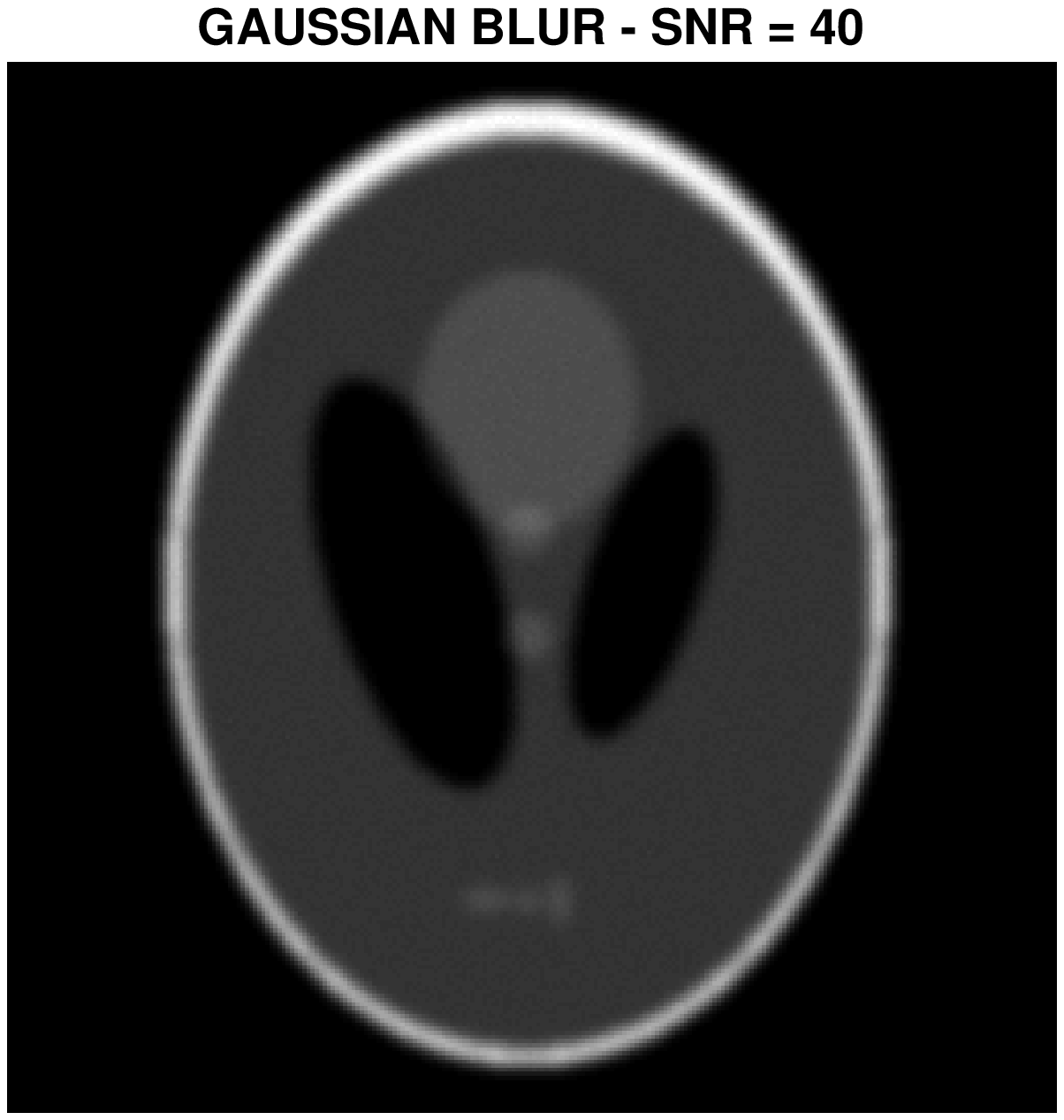}       &
			\hspace*{-1.5cm}
			\includegraphics[width=.47\textwidth]{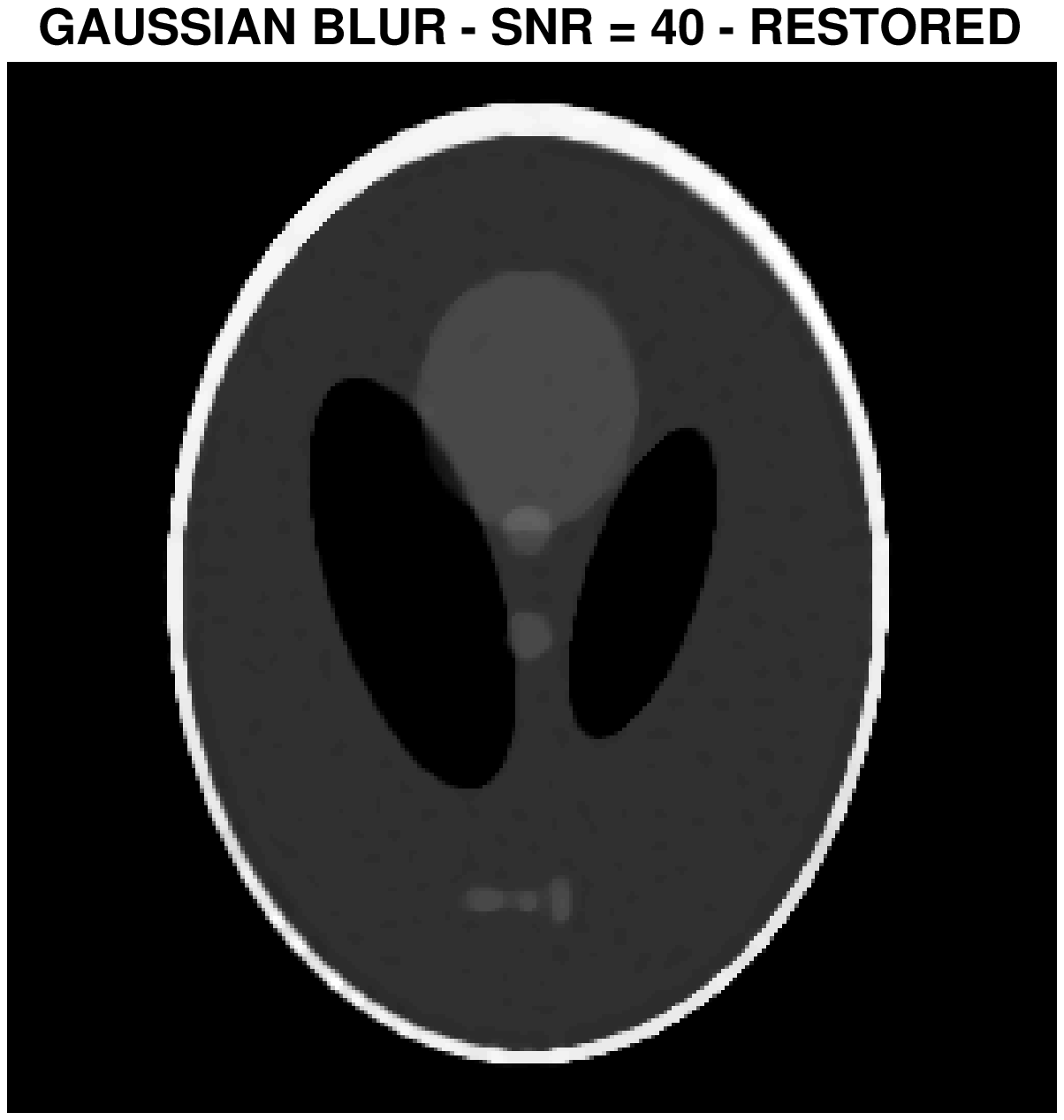}
		\end{tabular}
		\vspace*{-6mm}
		\caption{Phantom: images corrupted by Gaussian blur and Poisson noise (left) and images
		 restored by ACQUIRE (right). %Top: SNR = 35, bottom: SNR = 40.
		 \label{fig:phantom}}
	\end{center}
\end{figure}

\begin{figure}[b!]
	\vspace*{2mm}
	\begin{center}
		\hspace*{-.55cm}
		\begin{tabular}{ccc}
			\includegraphics[width=.47\textwidth]{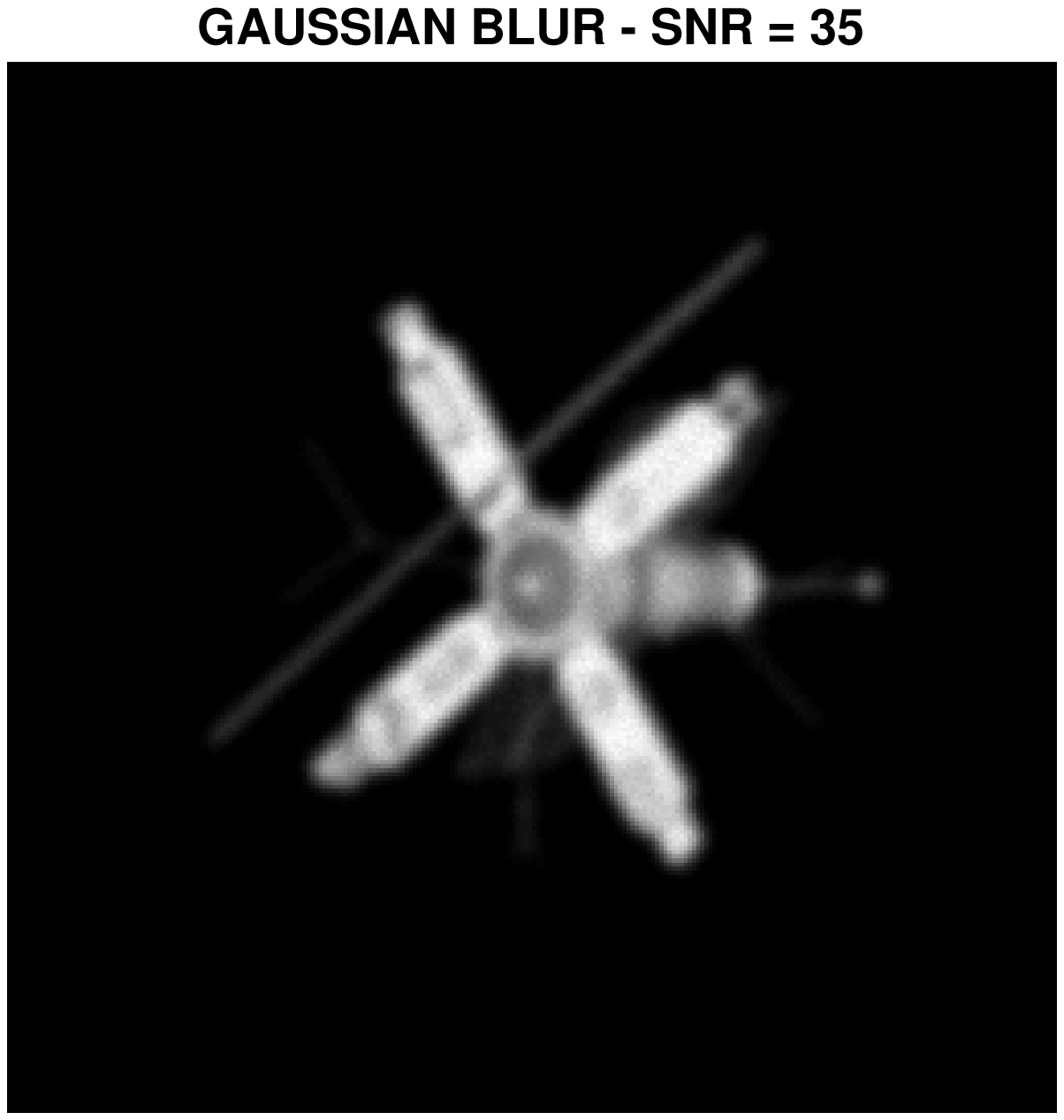}      &
			\hspace*{-1.5cm}
			\includegraphics[width=.47\textwidth]{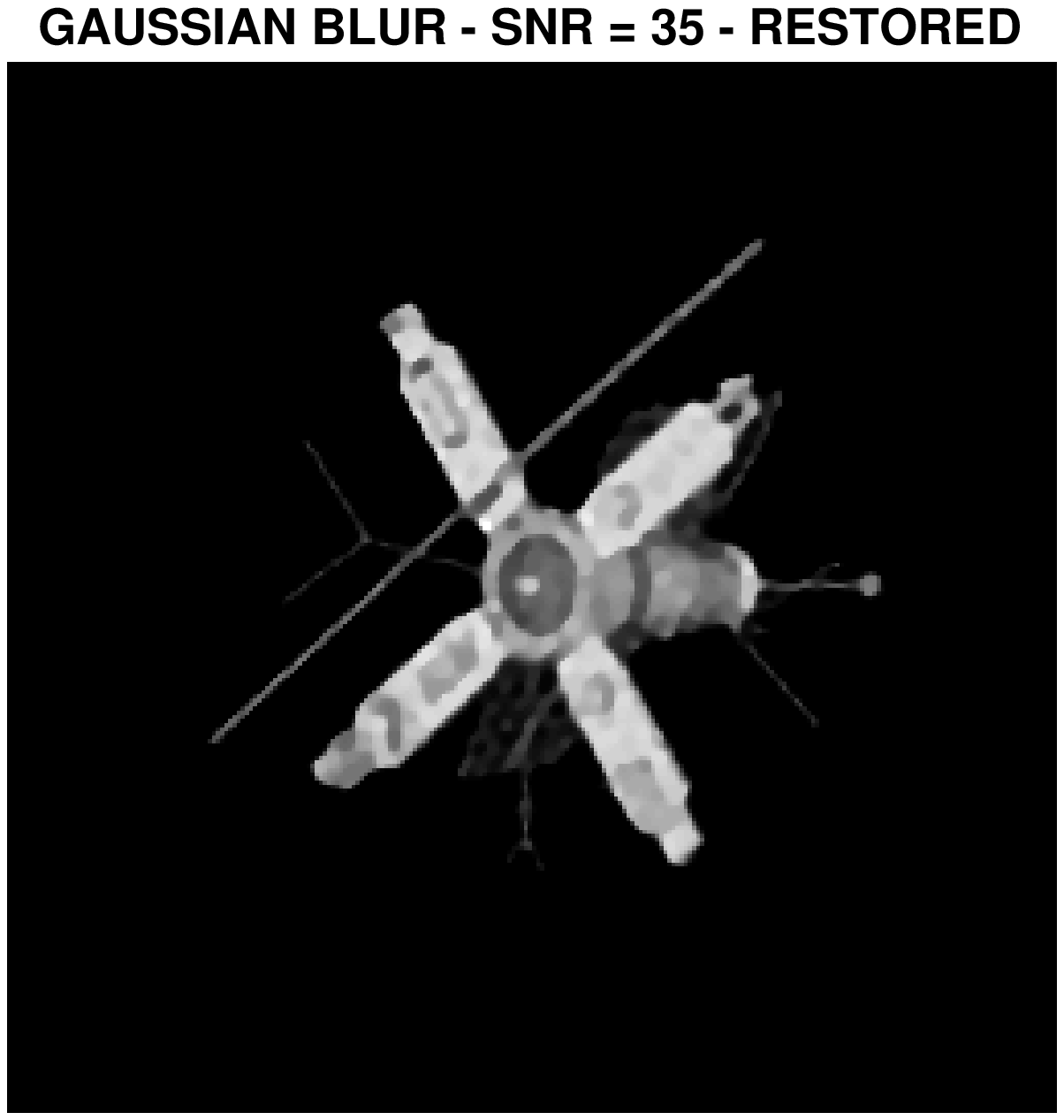}  \\[-4mm]
			\includegraphics[width=.47\textwidth]{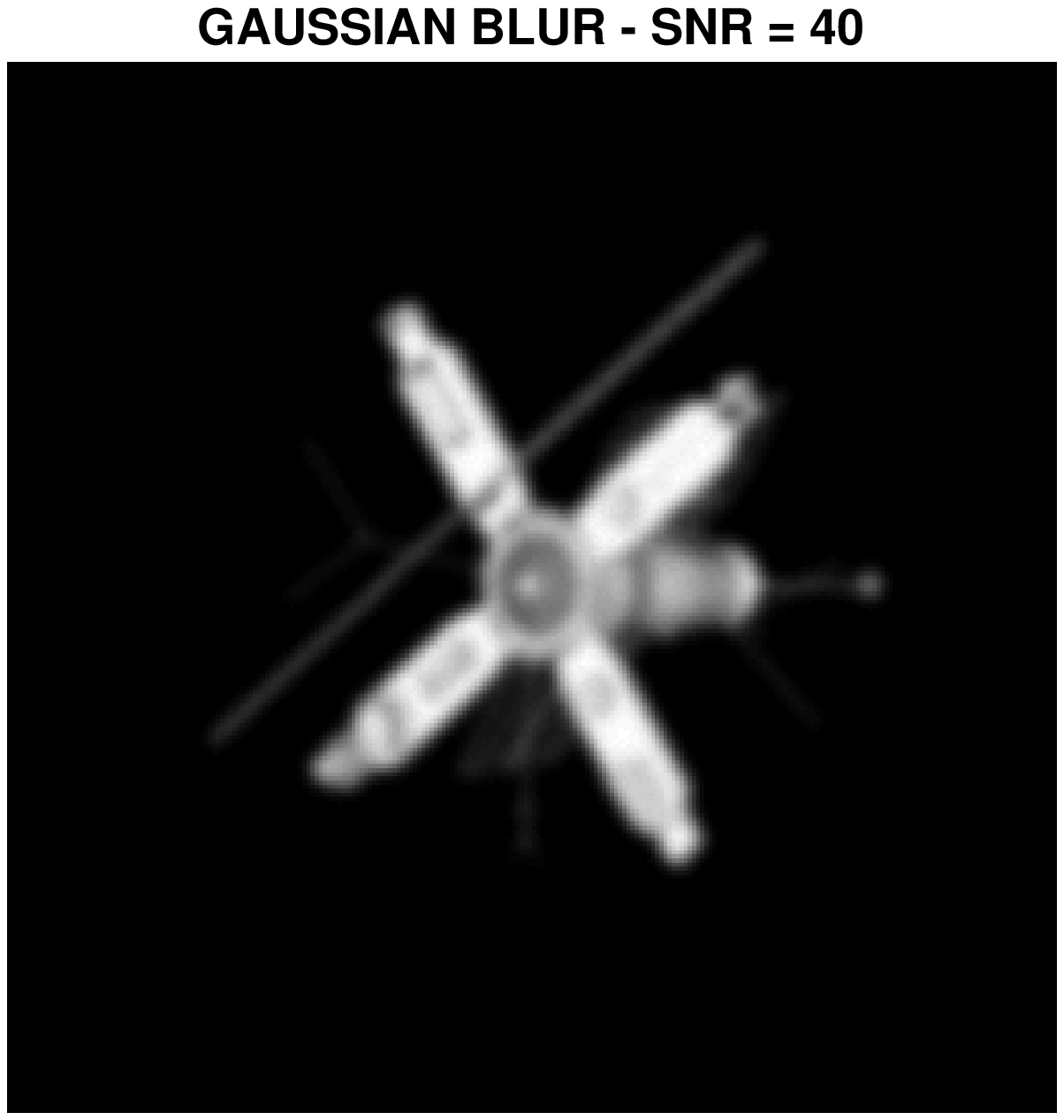}       &
			\hspace*{-1.5cm}
			\includegraphics[width=.47\textwidth]{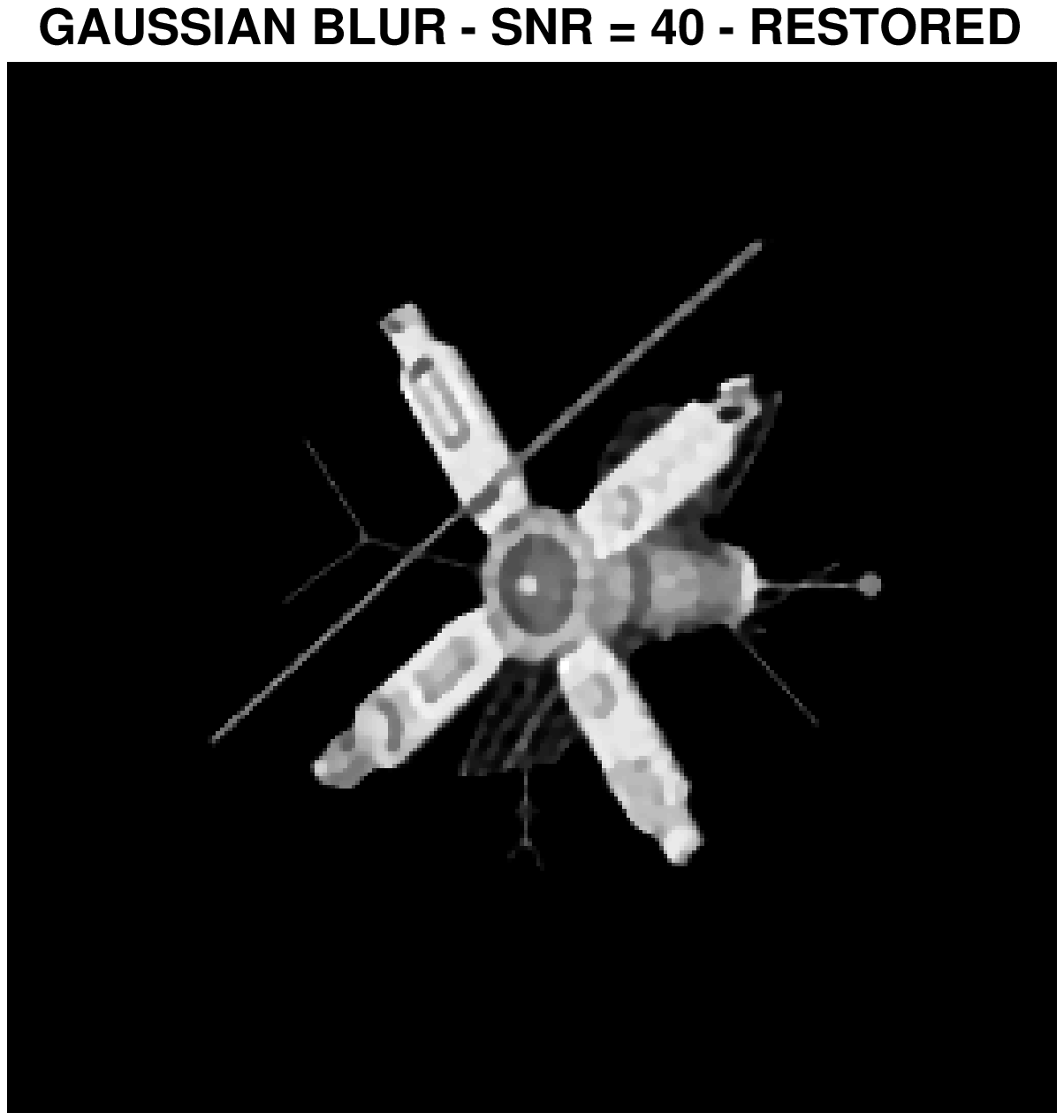}
		\end{tabular}
		\vspace*{-6mm}
		\caption{Satellite: images corrupted by Gaussian blur and Poisson noise (left) and images
		 restored by ACQUIRE (right). %Top: SNR = 35, bottom: SNR = 40.
		 \label{fig:satellite}}
	\end{center}
\end{figure}

\clearpage

\begin{figure}[p!]
	\begin{center}
		\begin{tabular}{cc}
			\includegraphics[width=.44\textwidth]{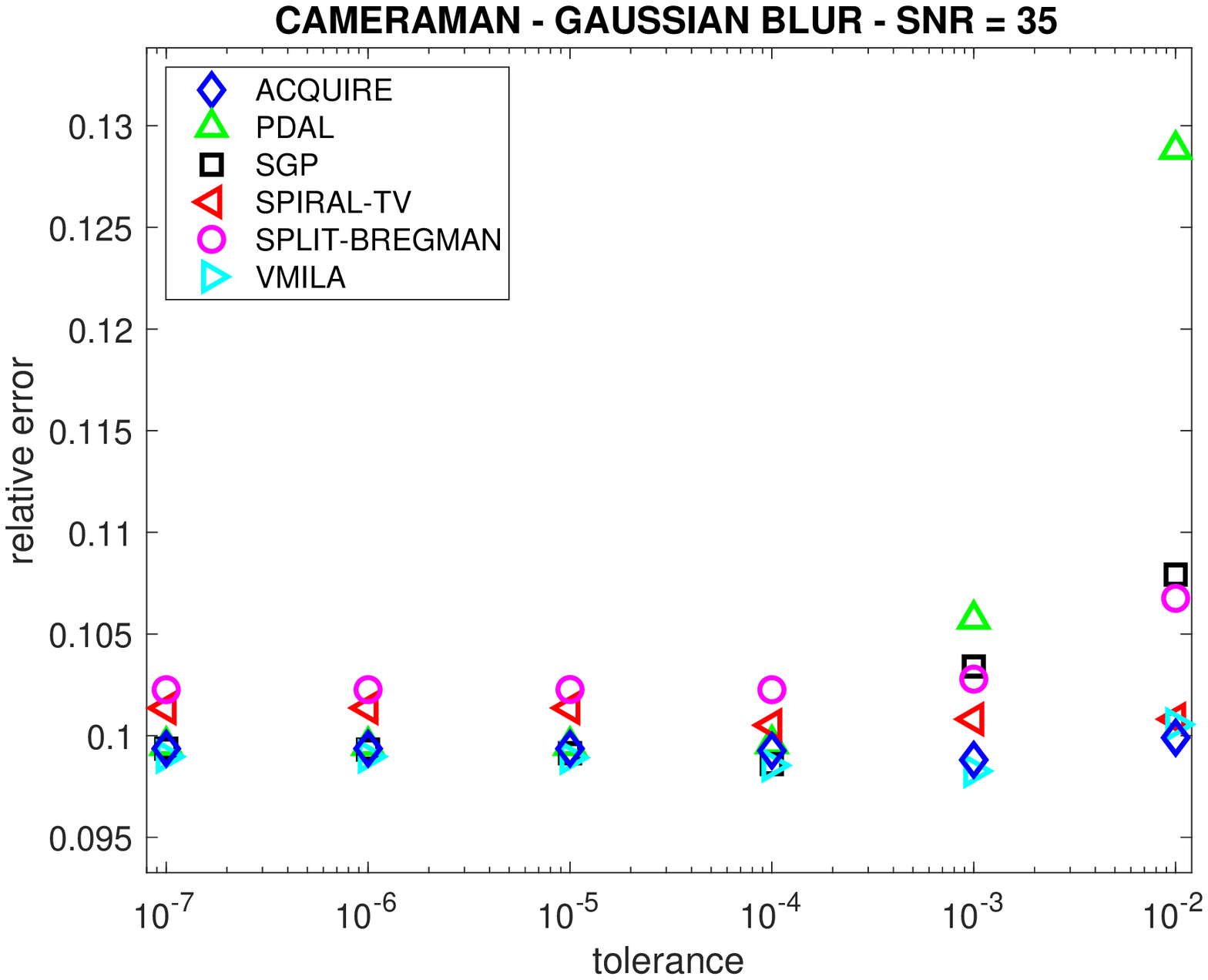}     &
			\includegraphics[width=.44\textwidth]{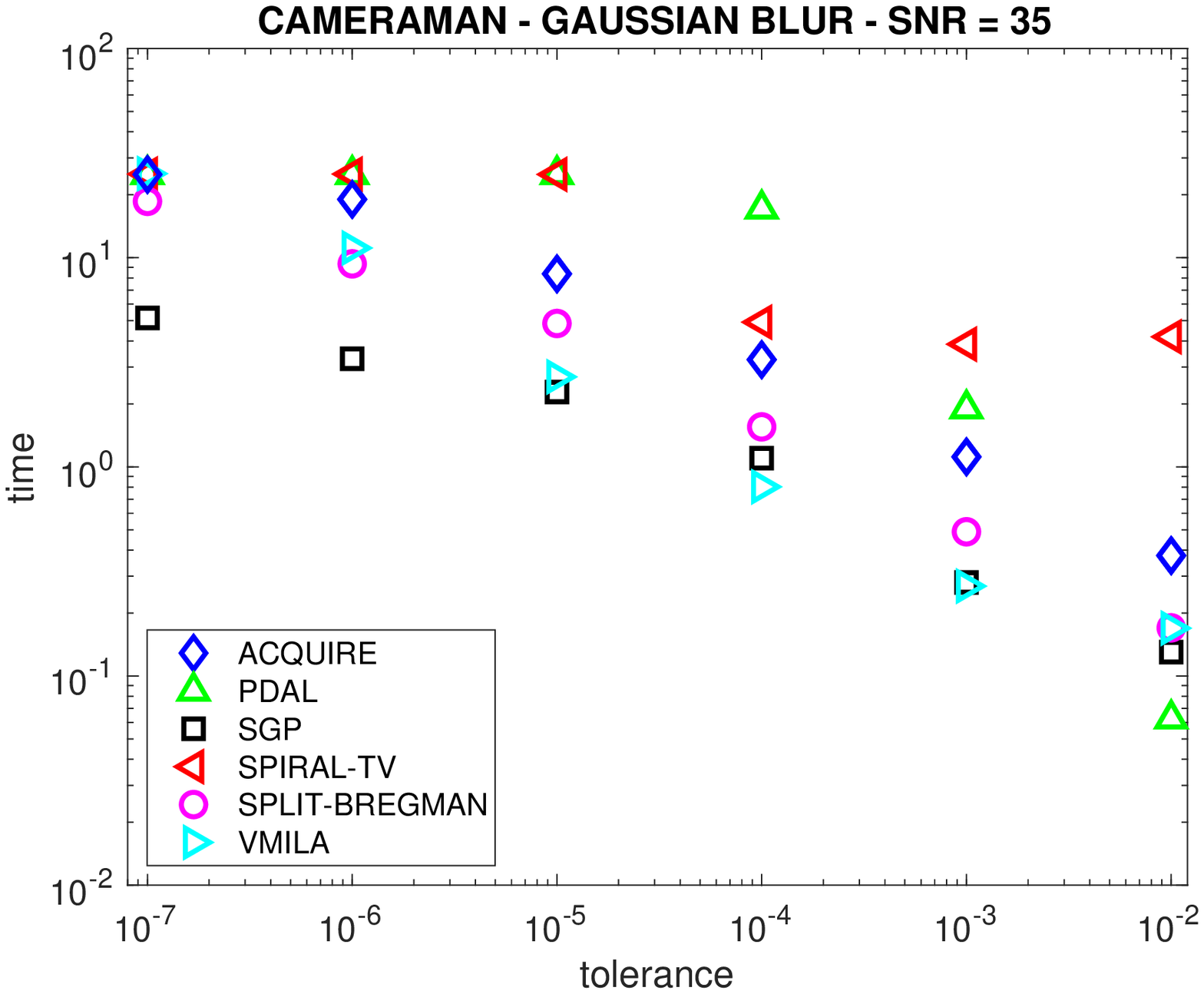}  \\[-1mm]
			\includegraphics[width=.44\textwidth]{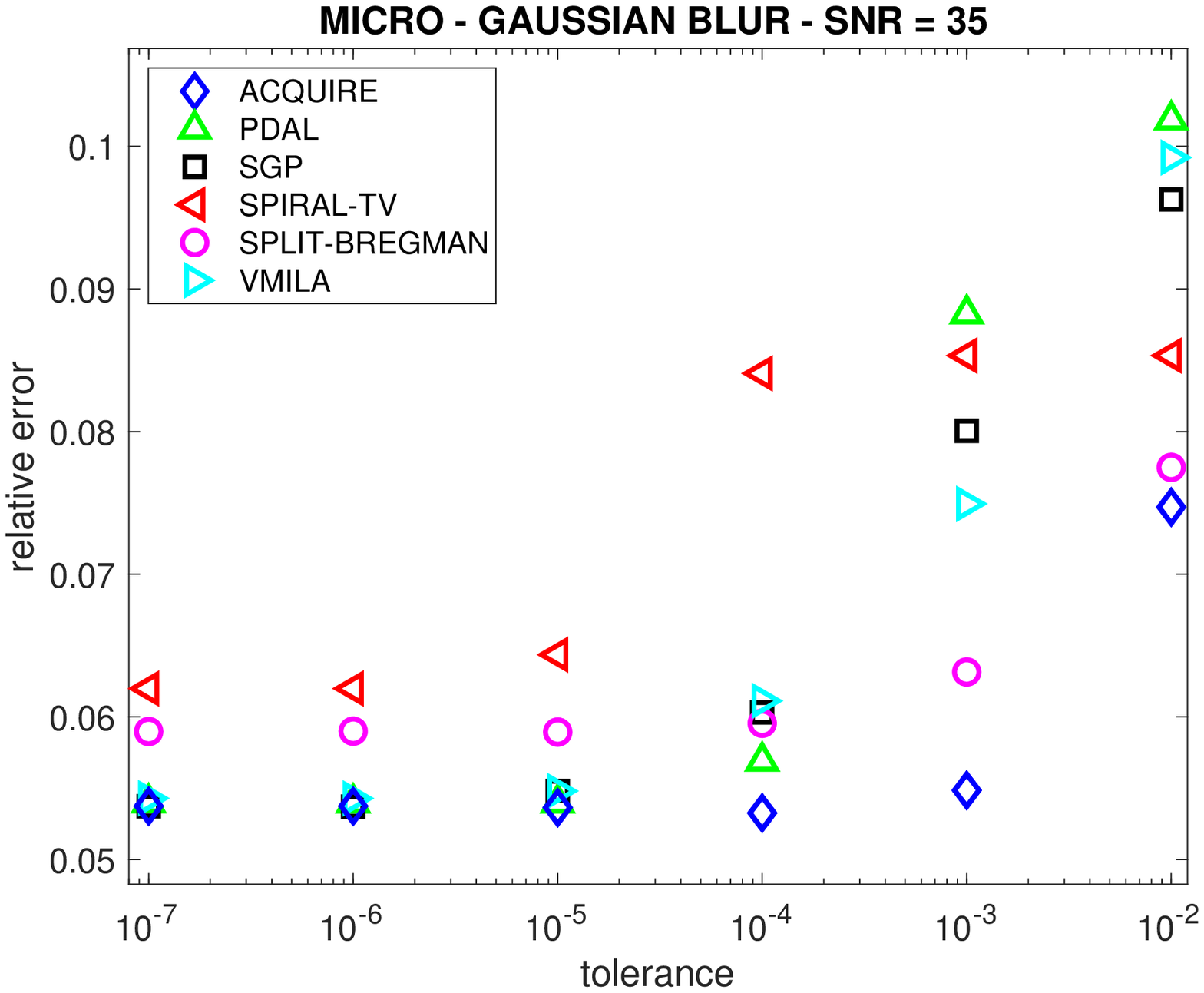}               &
			\includegraphics[width=.44\textwidth]{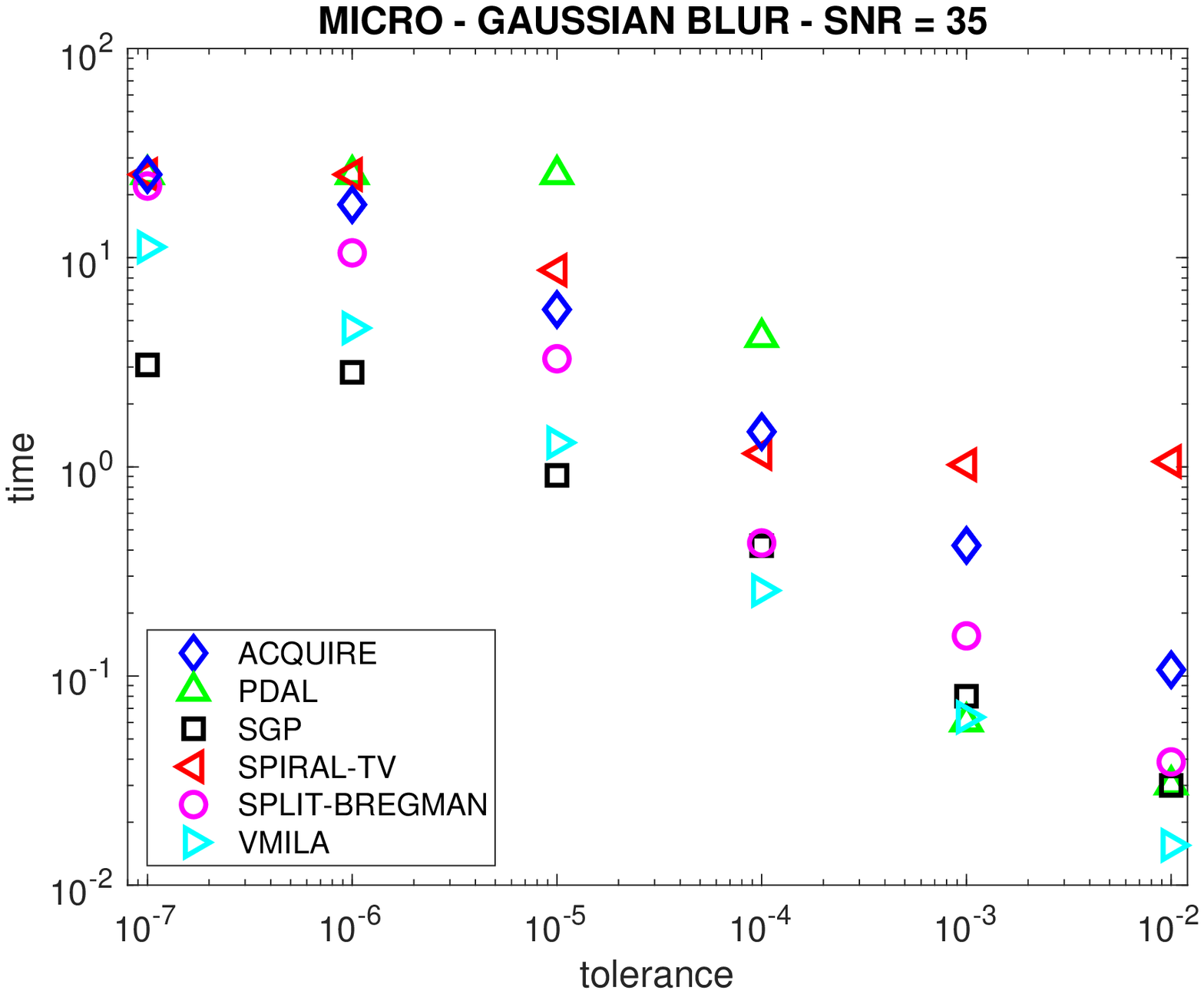}             \\[-1mm]
			\includegraphics[width=.44\textwidth]{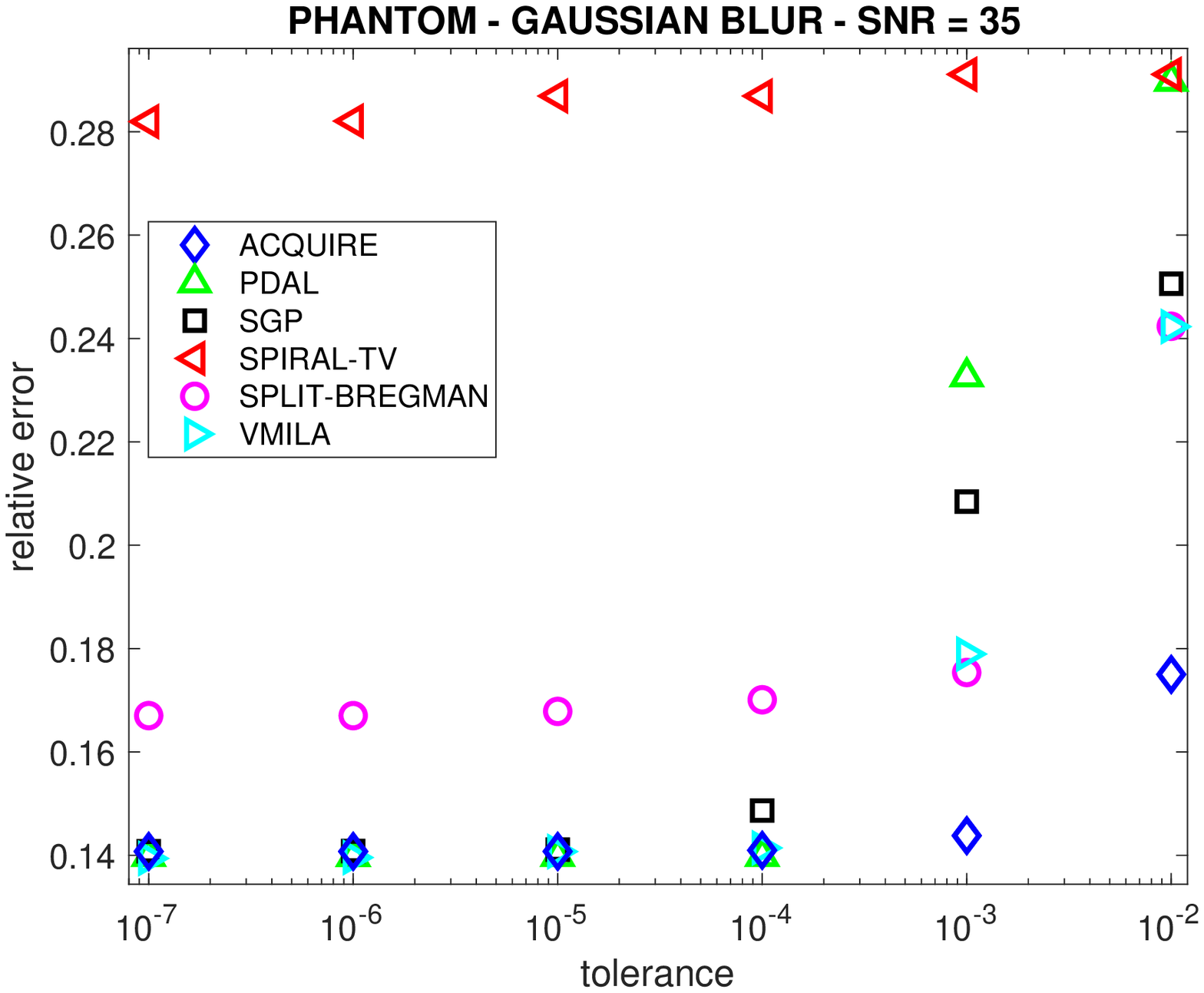}          &
			\includegraphics[width=.44\textwidth]{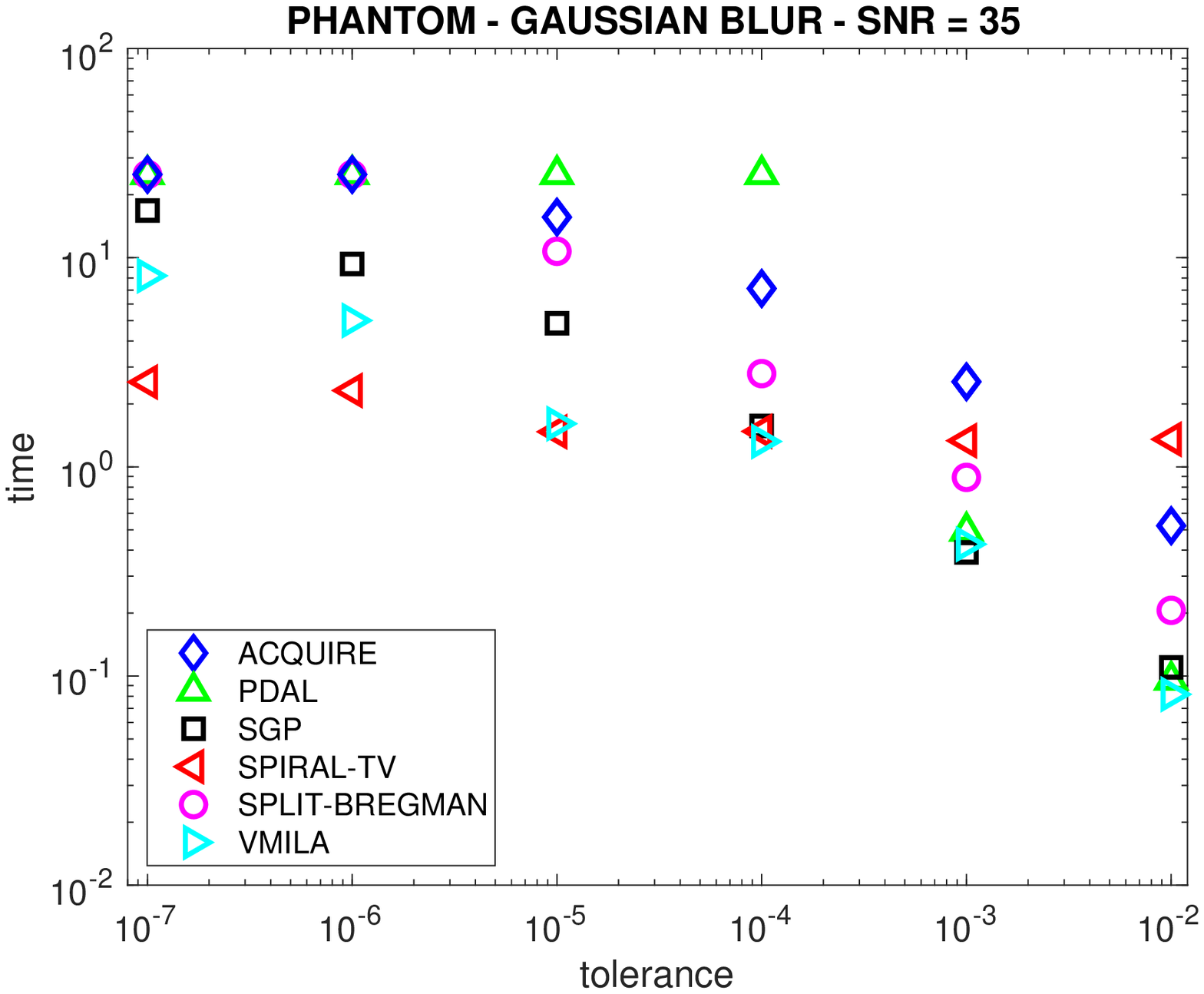}        \\[-1mm]
			\includegraphics[width=.44\textwidth]{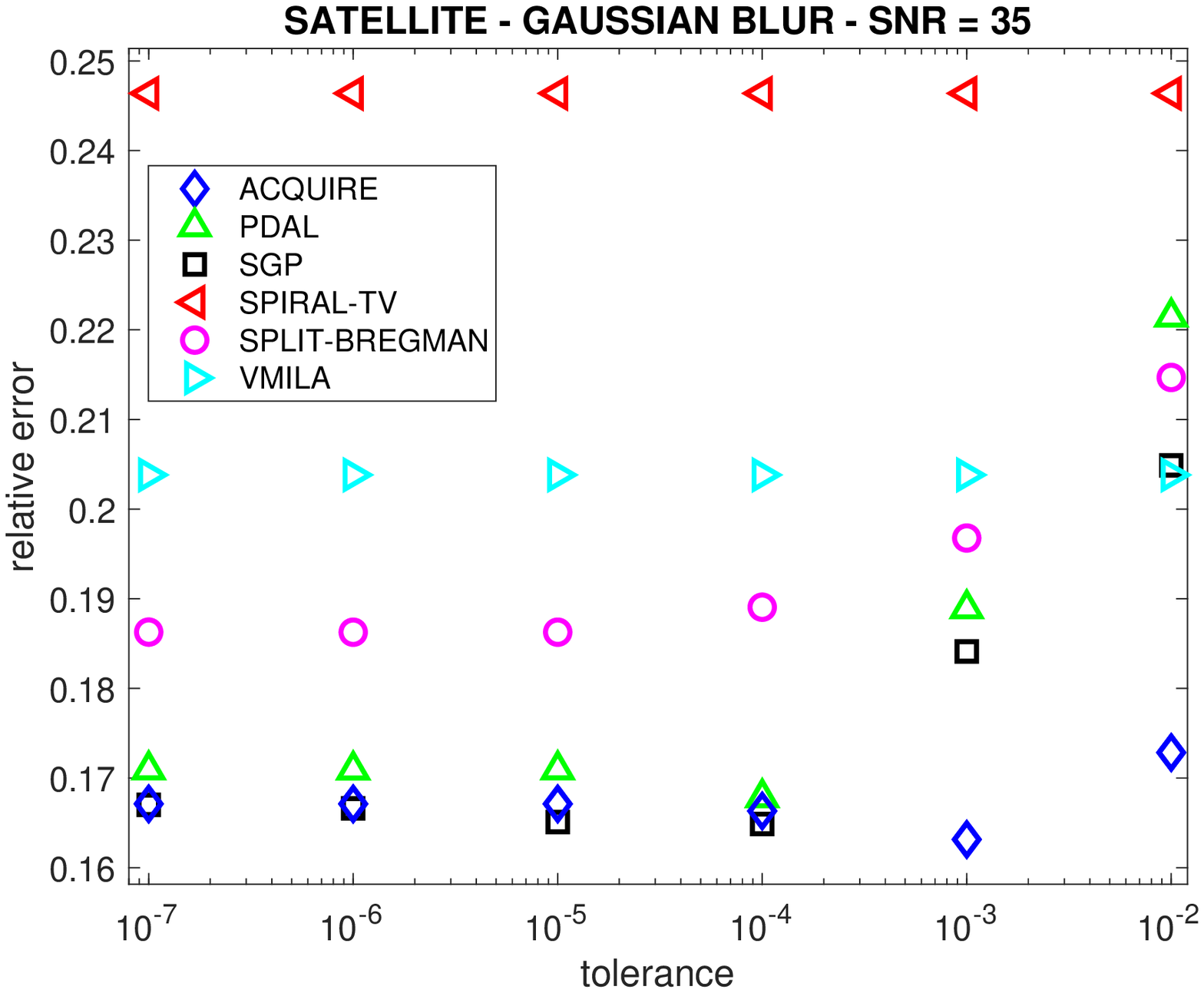}            &
			\includegraphics[width=.44\textwidth]{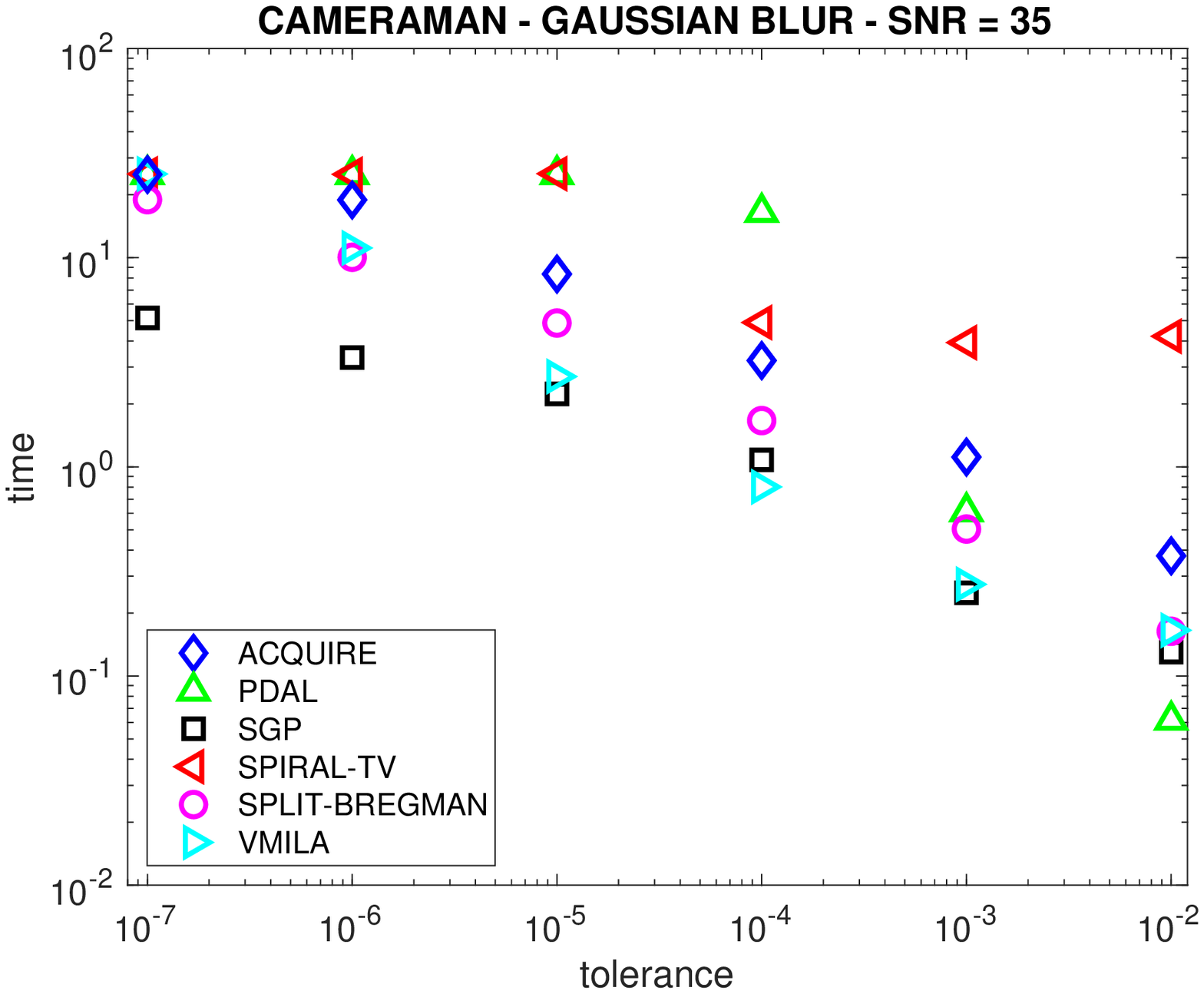}
		\end{tabular}
		\vspace*{-2mm}
		\caption{Test set T1, SNR = 35: relative error (left) and execution time (right) versus tolerance,
		for all the methods.\label{fig:compar35}}
	\end{center}
\end{figure}

\clearpage

\begin{figure}[p!]
	\begin{center}
		\begin{tabular}{cc}
			\includegraphics[width=.44\textwidth]{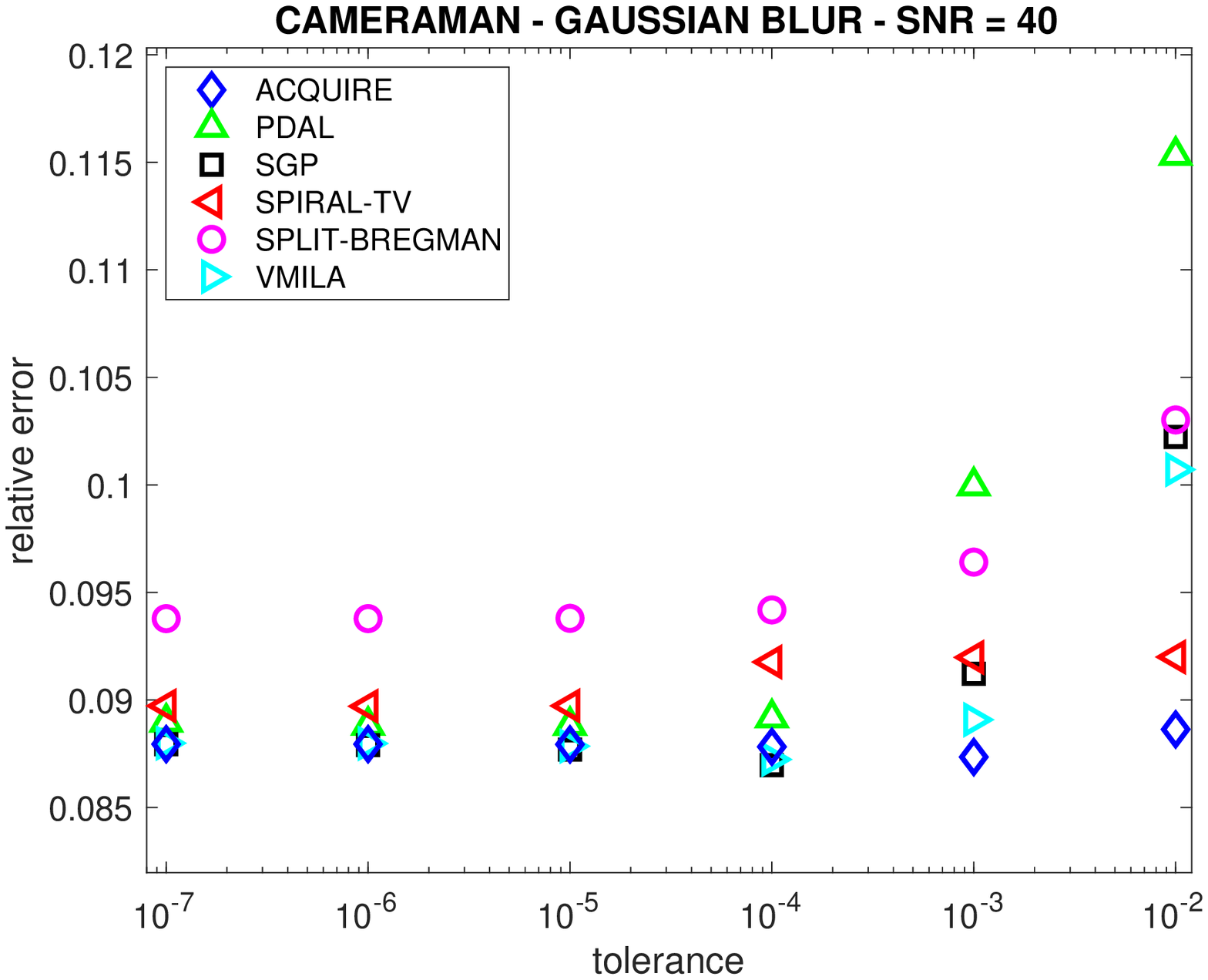}     &
			\includegraphics[width=.44\textwidth]{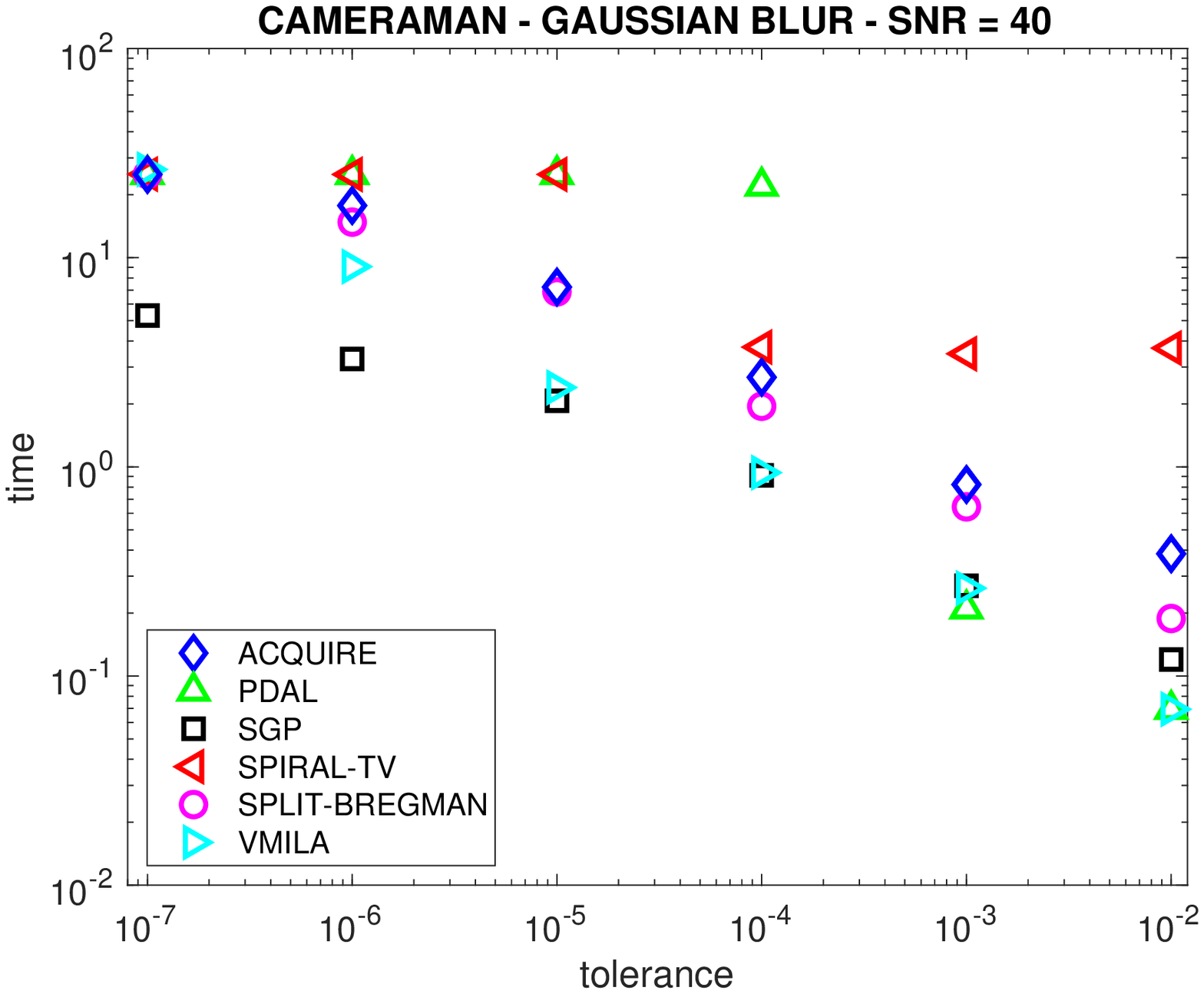}  \\[-1mm]
			\includegraphics[width=.44\textwidth]{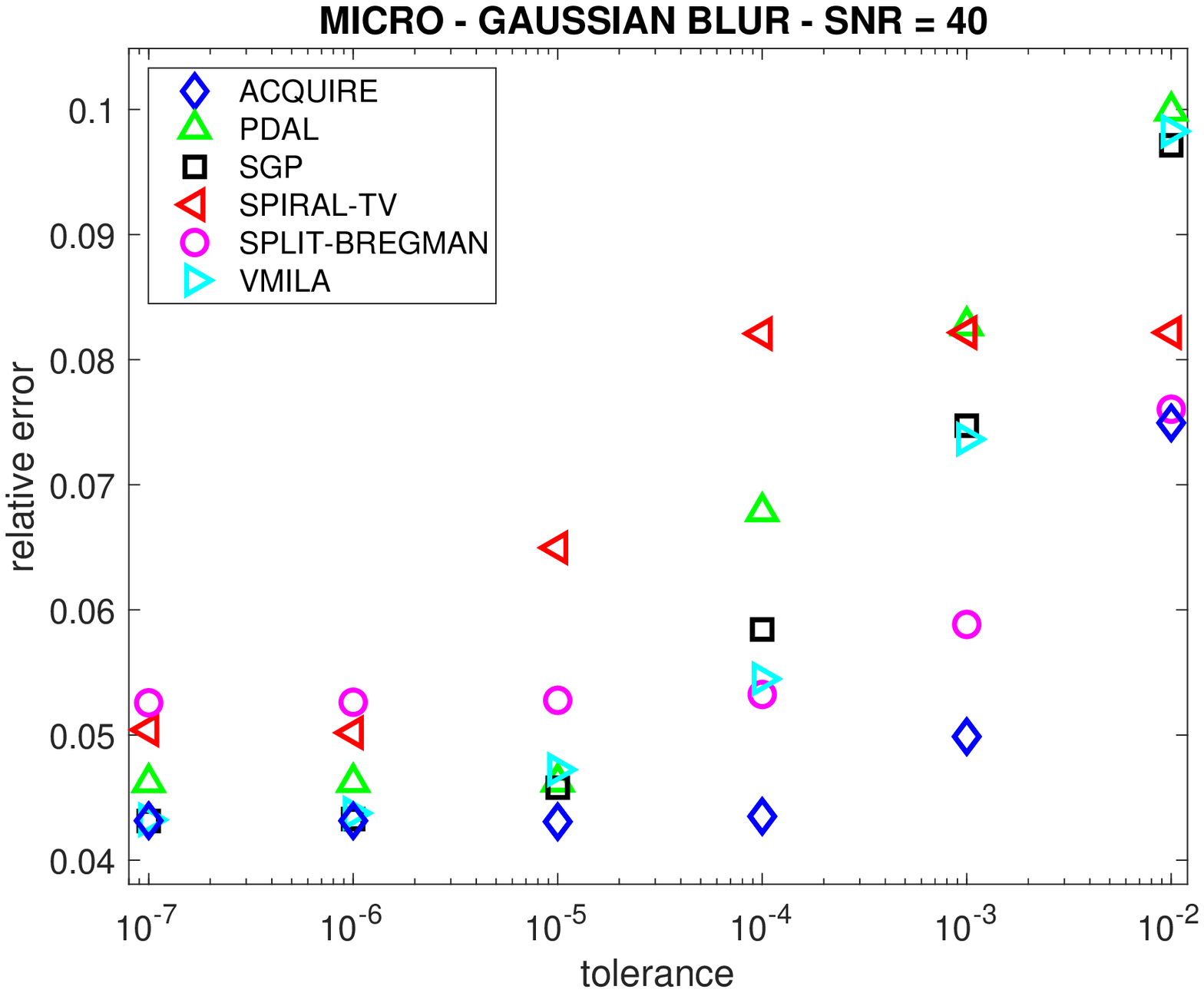}               &
			\includegraphics[width=.44\textwidth]{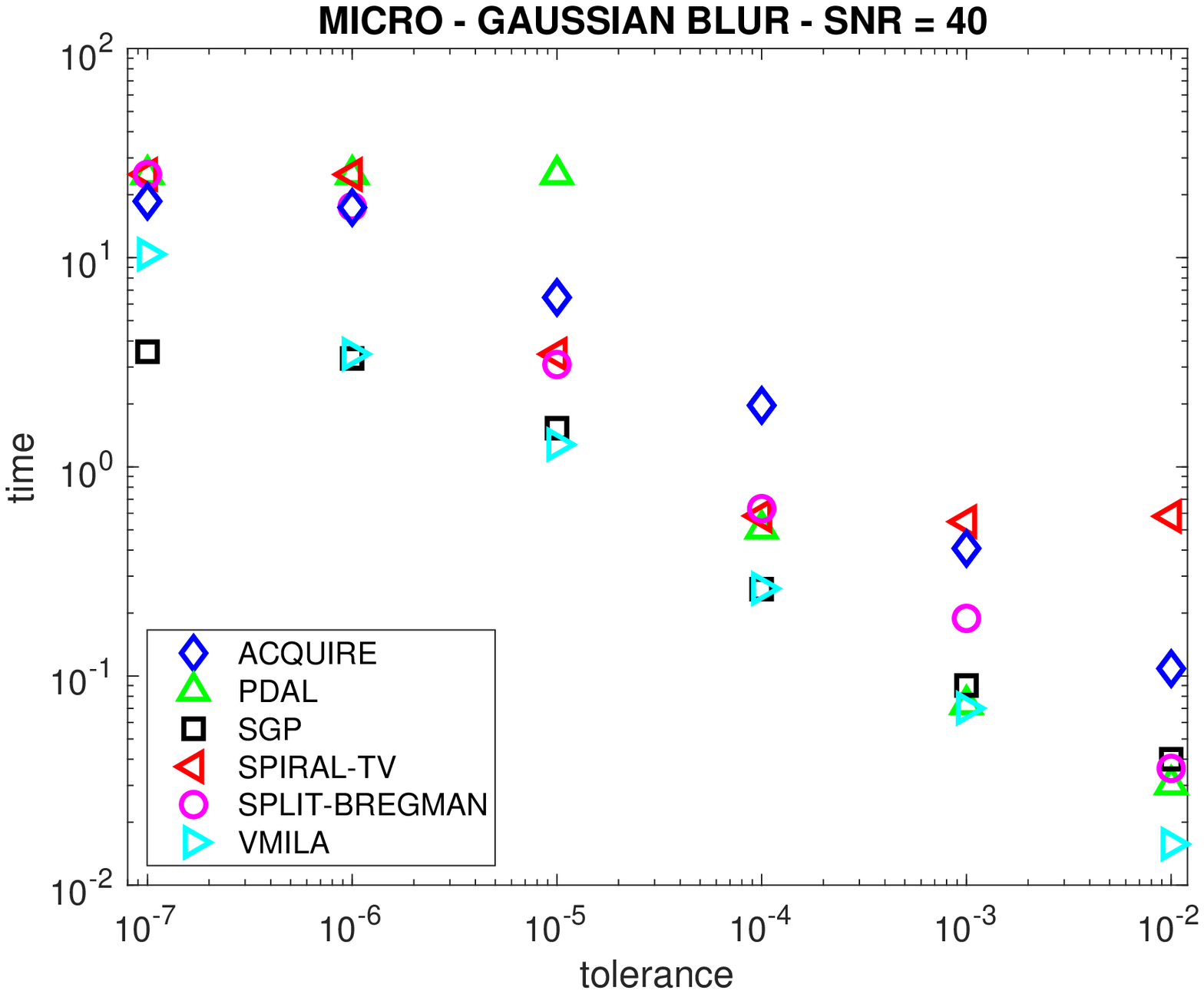}             \\[-1mm]
			\includegraphics[width=.44\textwidth]{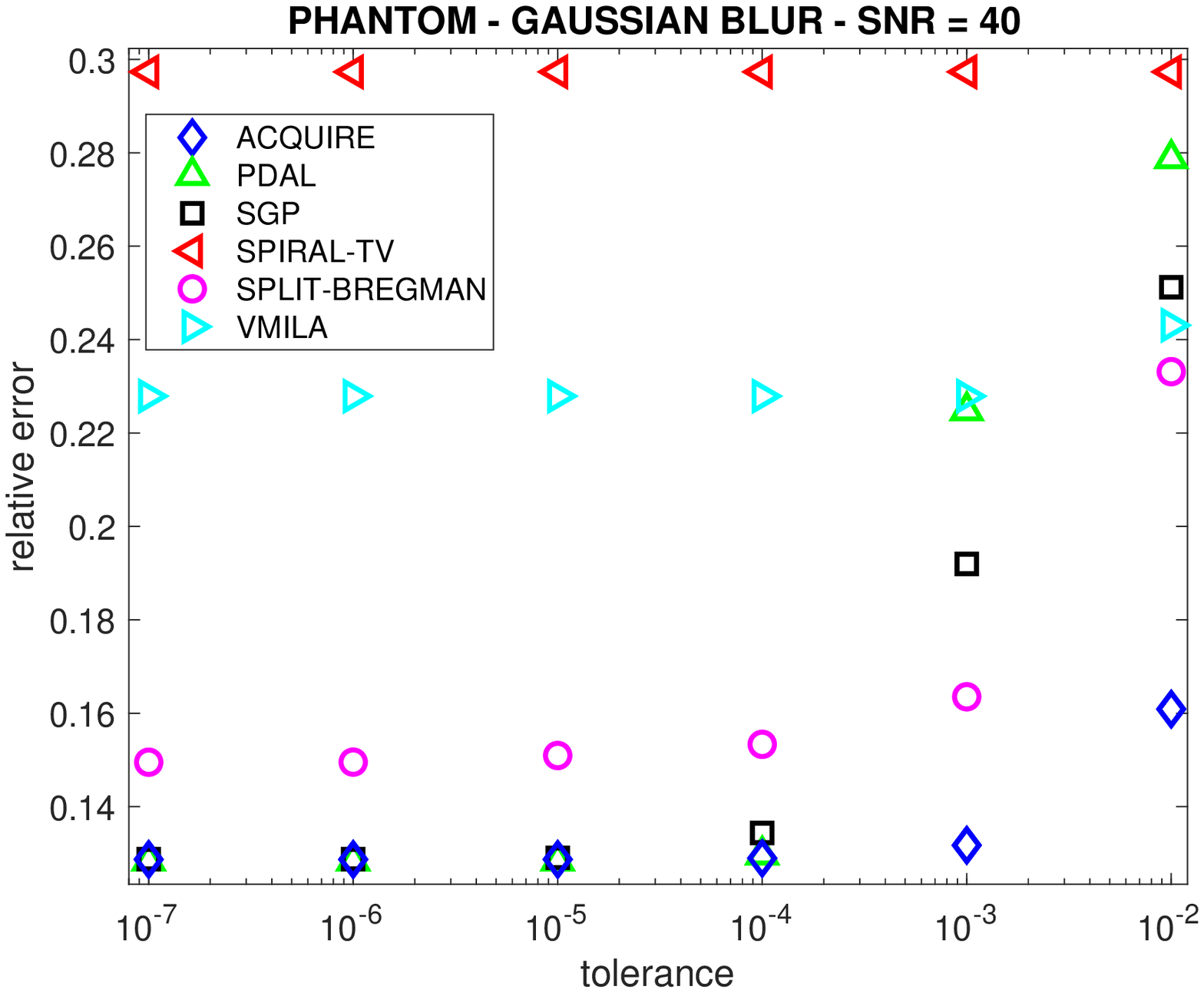}          &
			\includegraphics[width=.44\textwidth]{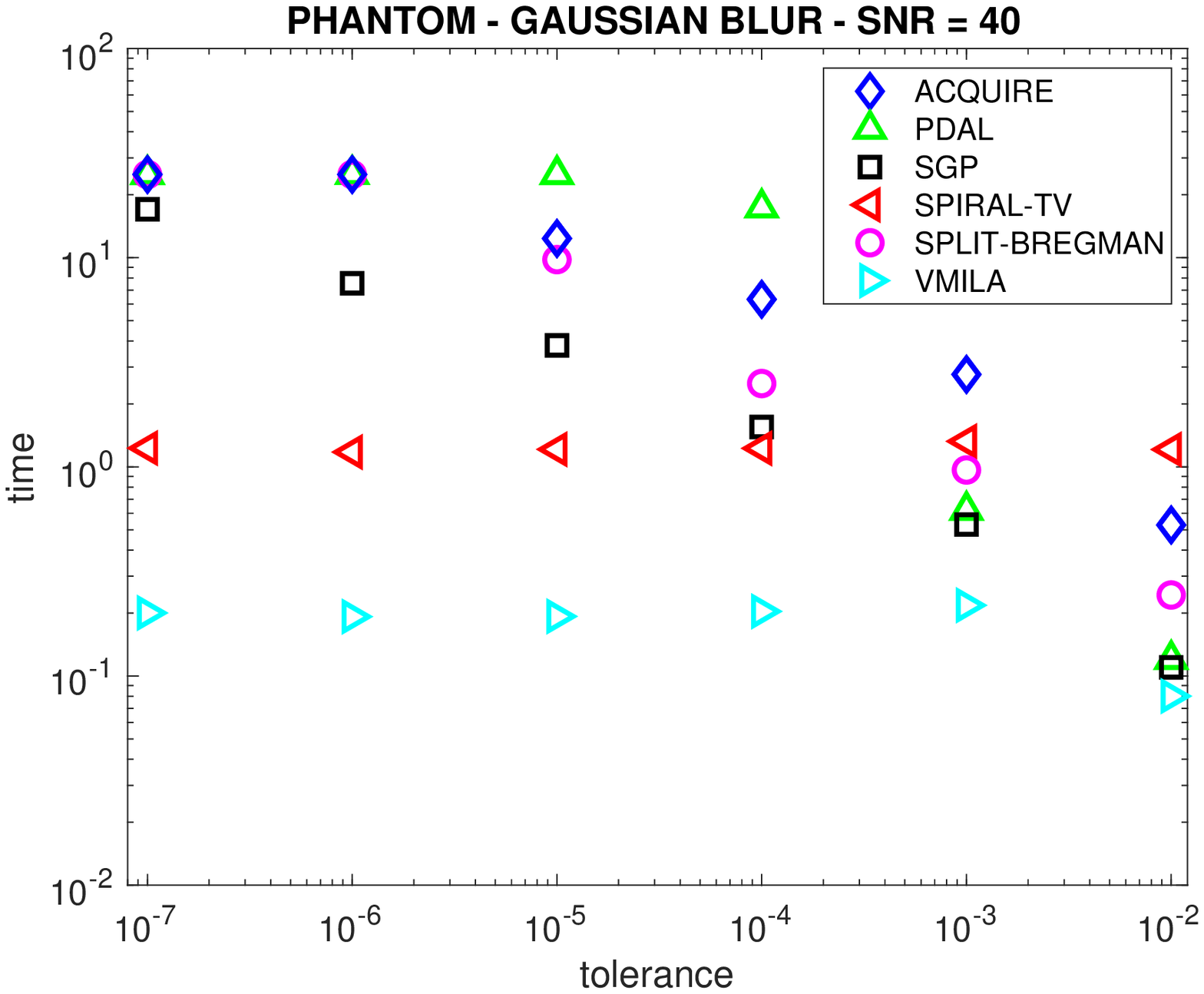}        \\[-1mm]
			\includegraphics[width=.44\textwidth]{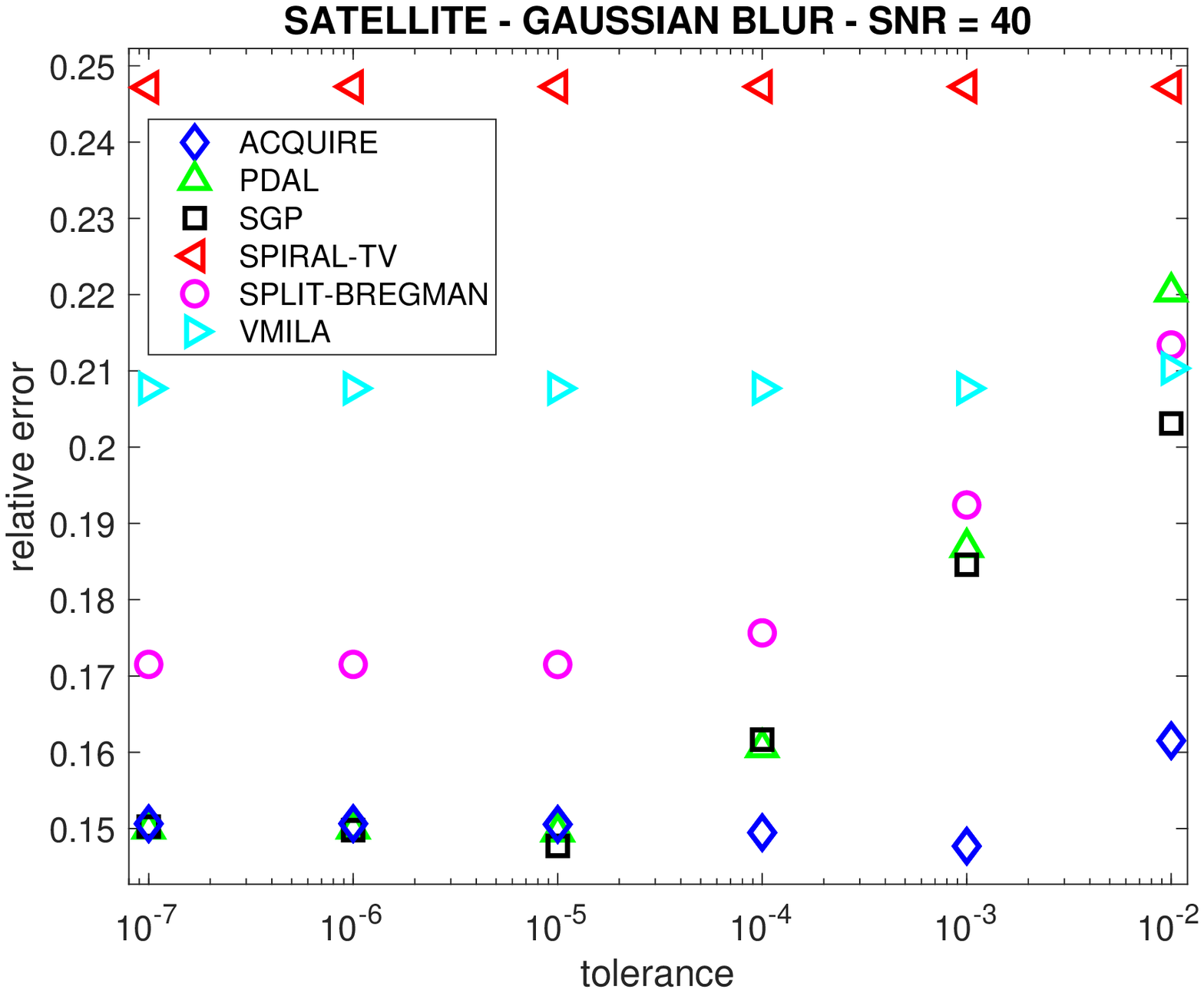}            &
			\includegraphics[width=.44\textwidth]{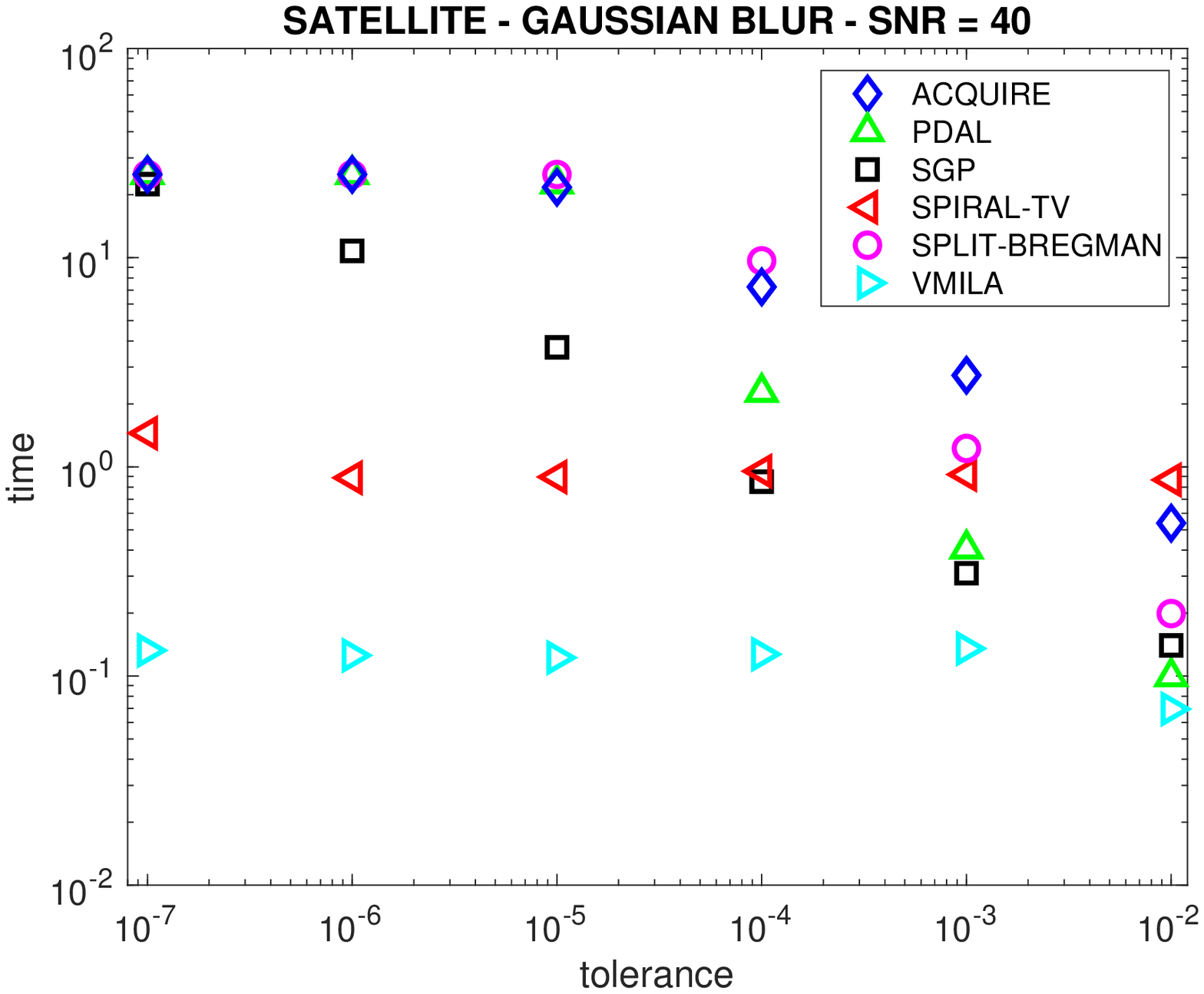}
		\end{tabular}
		\vspace*{-2mm}
		\caption{Test set T1, SNR = 40: relative error (left) and execution time (right) versus tolerance,
		for all the methods.\label{fig:compar40}}
	\end{center}
\end{figure}

\begin{figure}[t!]
	\vspace*{2mm}
	\begin{center}
		\hspace*{-.55cm}
		\begin{tabular}{ccc}
			\includegraphics[width=.47\textwidth]{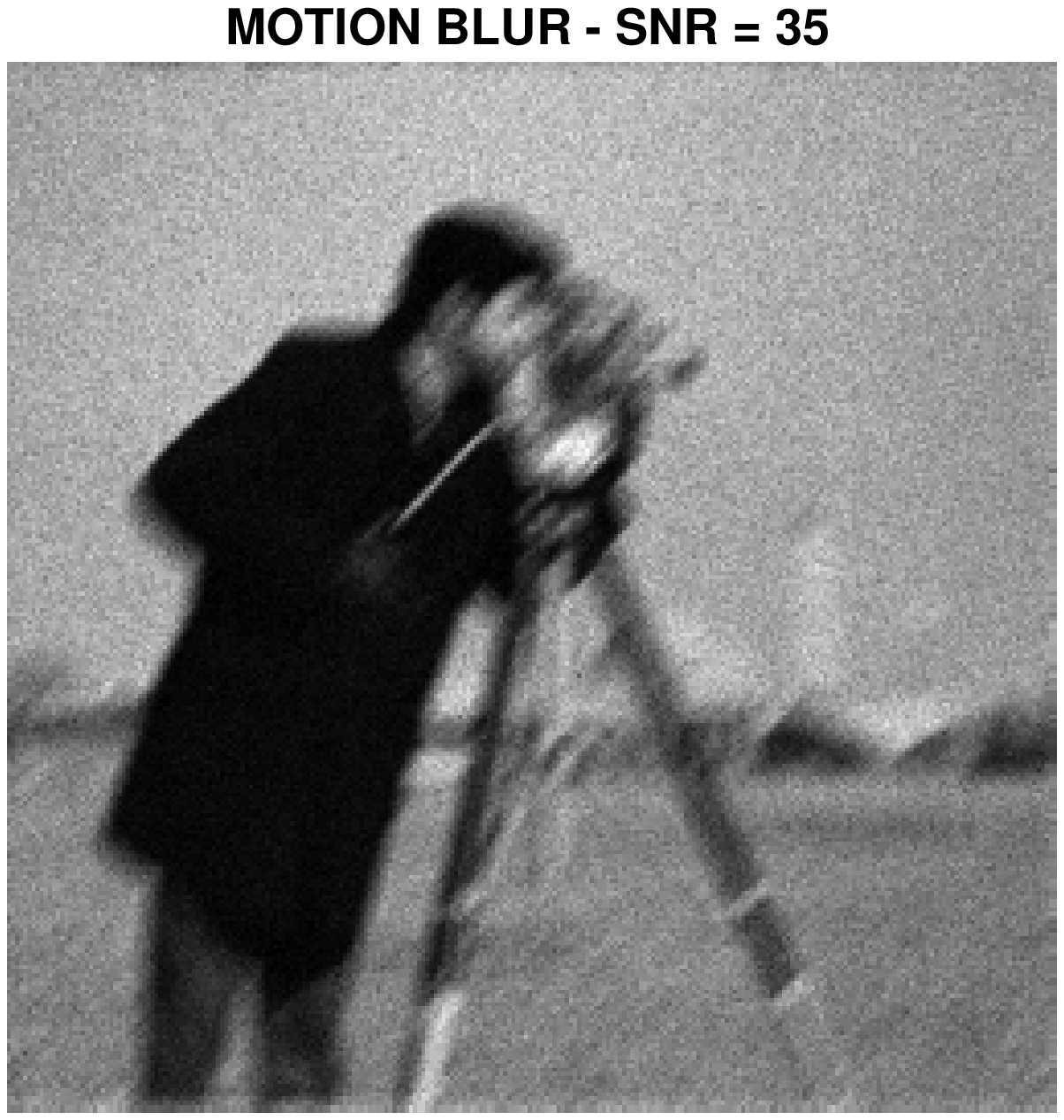}      &
			\hspace*{-1.5cm}
			\includegraphics[width=.47\textwidth]{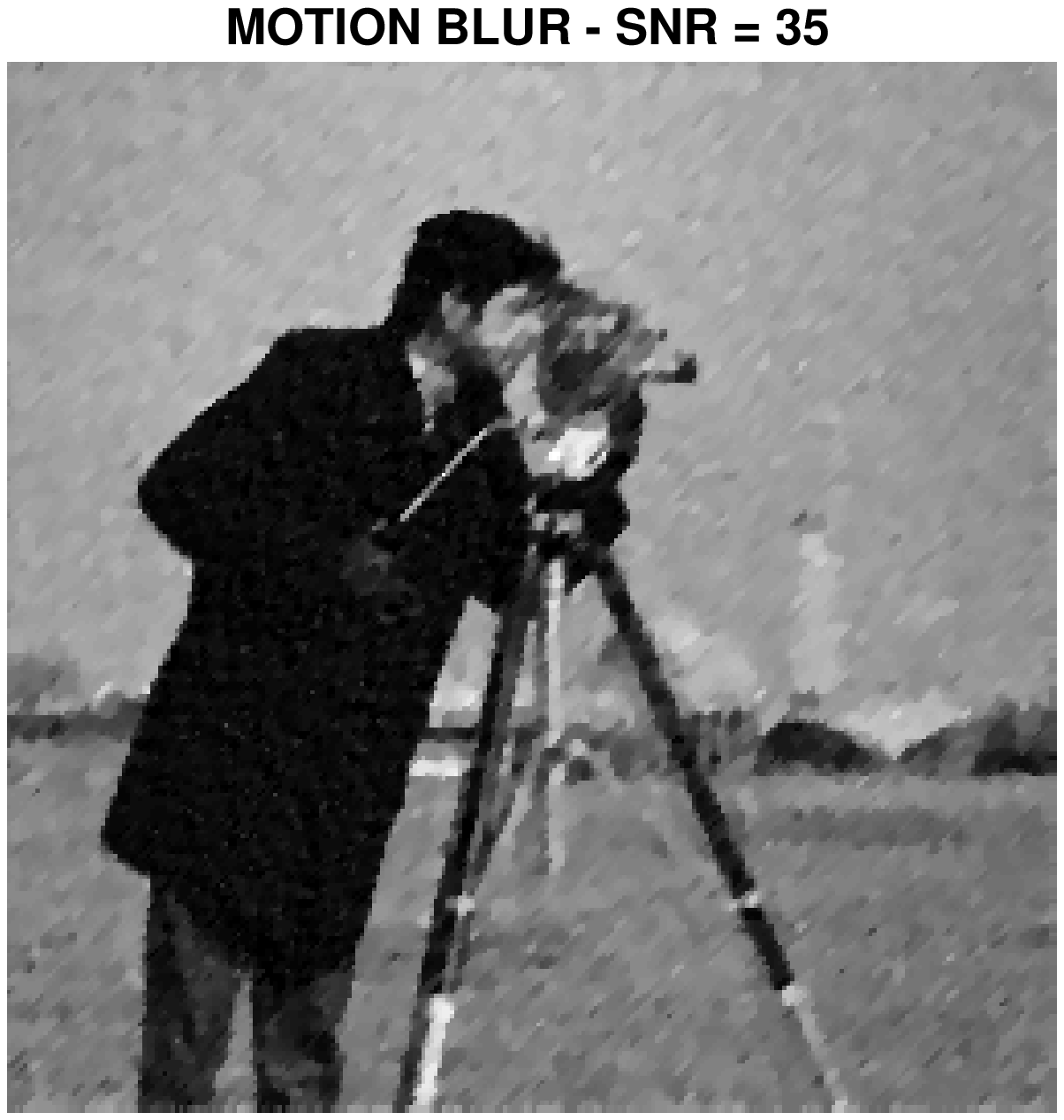}  \\[-4mm]
			\includegraphics[width=.47\textwidth]{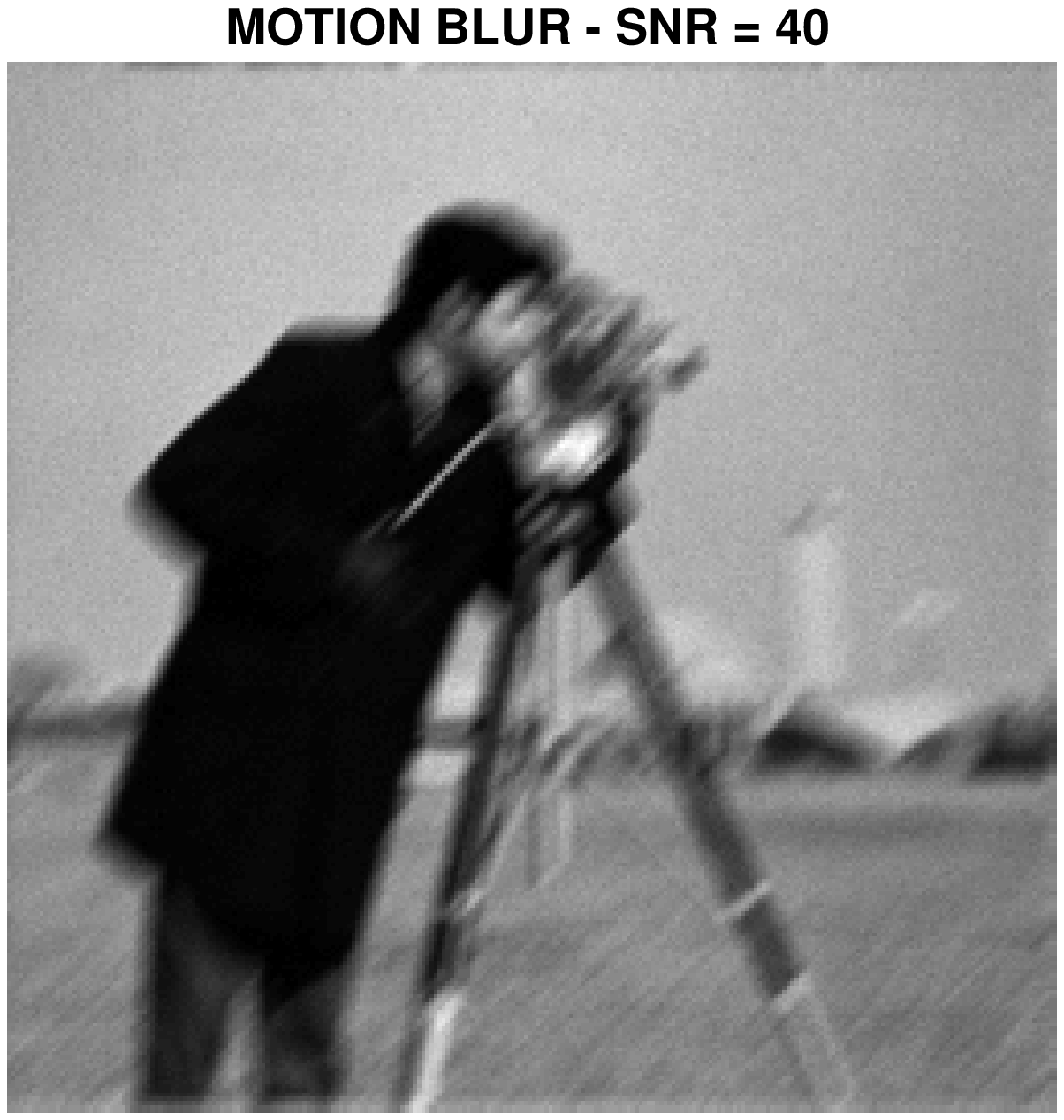}       &
			\hspace*{-1.5cm}
			\includegraphics[width=.47\textwidth]{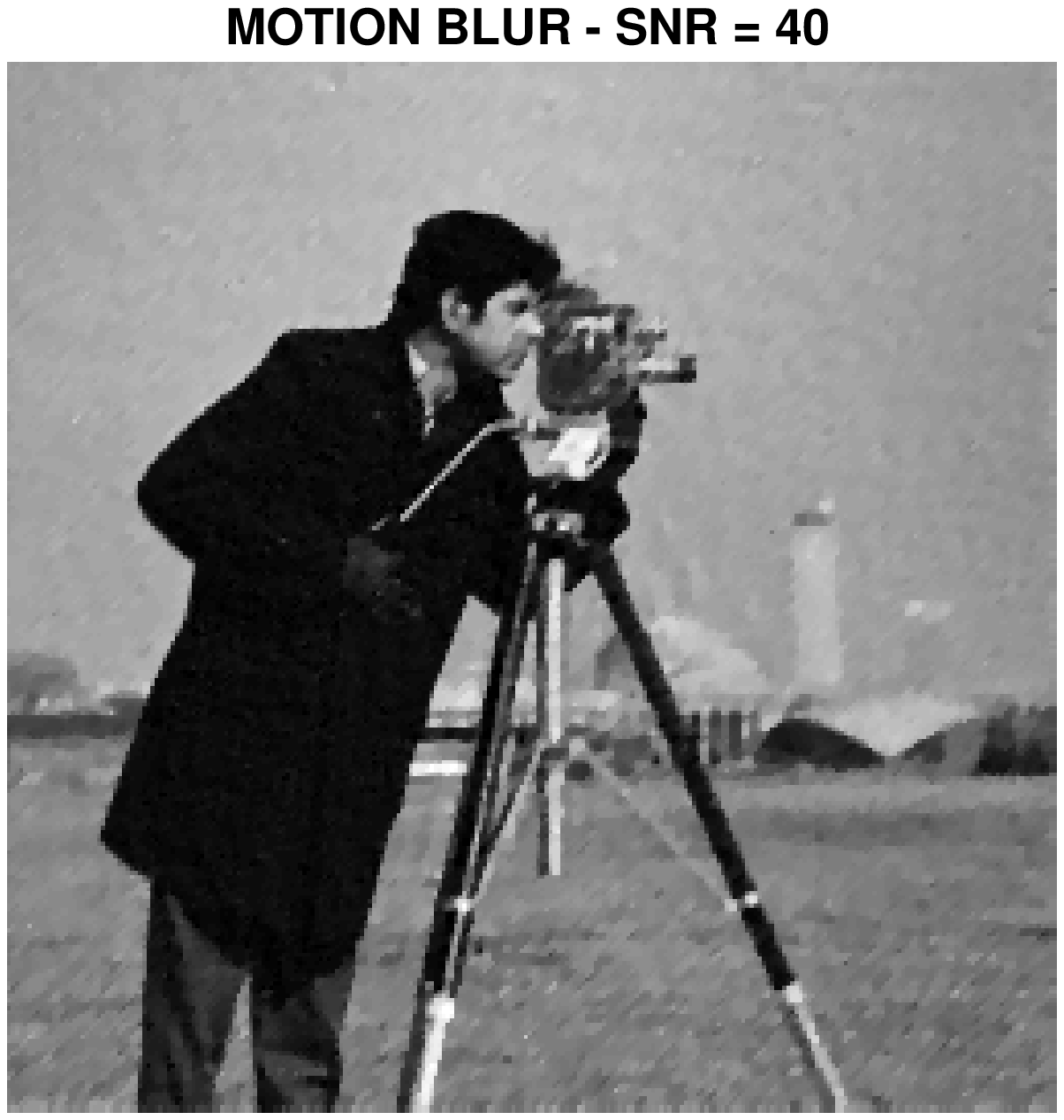}
		\end{tabular}
		\vspace*{-6mm}
		\caption{Cameraman: images corrupted by motion blur and Poisson noise (left) and images
		 restored by ACQUIRE (right).  %Top: SNR = 35, bottom: SNR = 40.
		 \label{fig:cameraman_mot}}
	\end{center}
\end{figure}

\begin{figure}[b!]
	\vspace*{2mm}
	\begin{center}
		\hspace*{-.55cm}
		\begin{tabular}{ccc}
			\includegraphics[width=.47\textwidth]{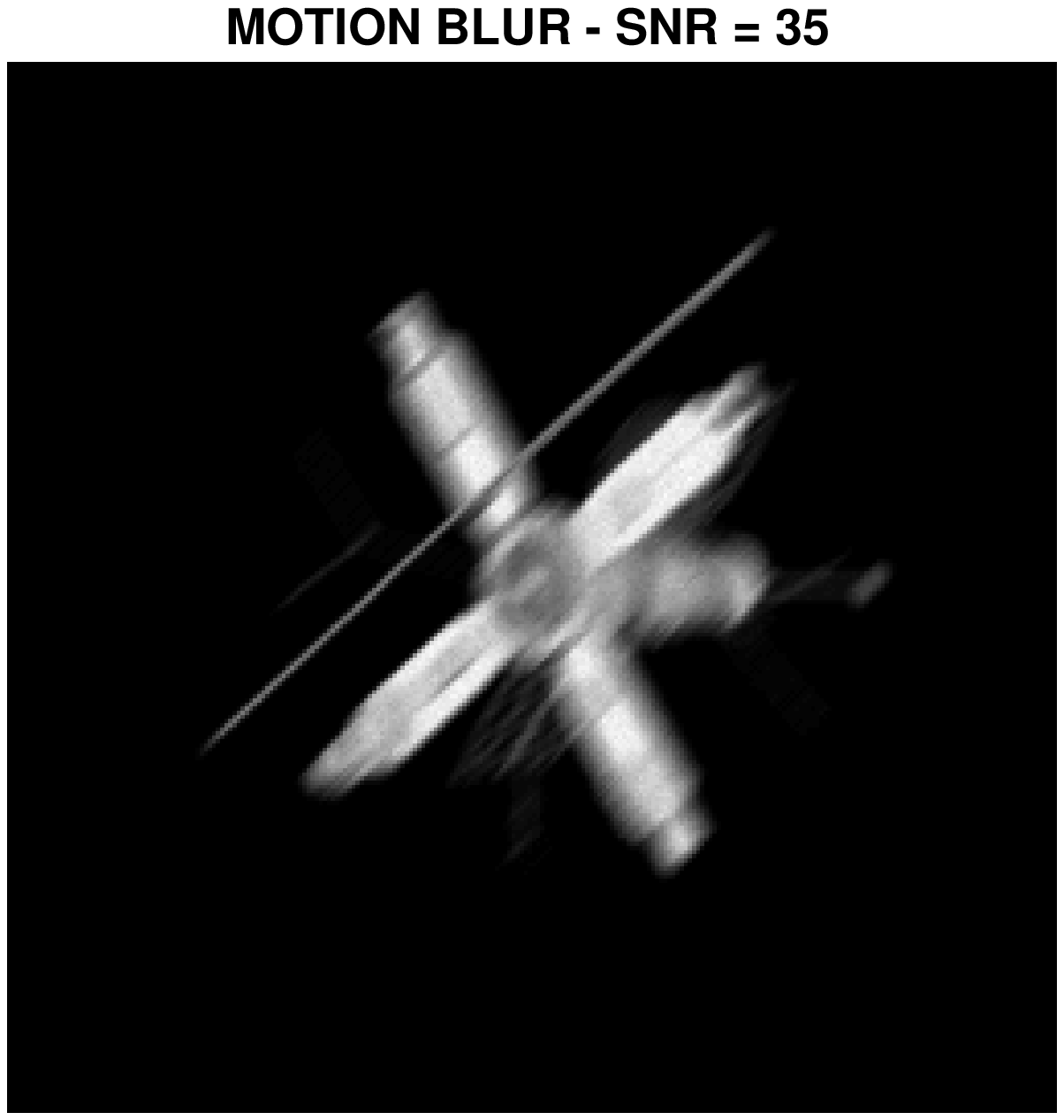}      &
			\hspace*{-1.5cm}
			\includegraphics[width=.47\textwidth]{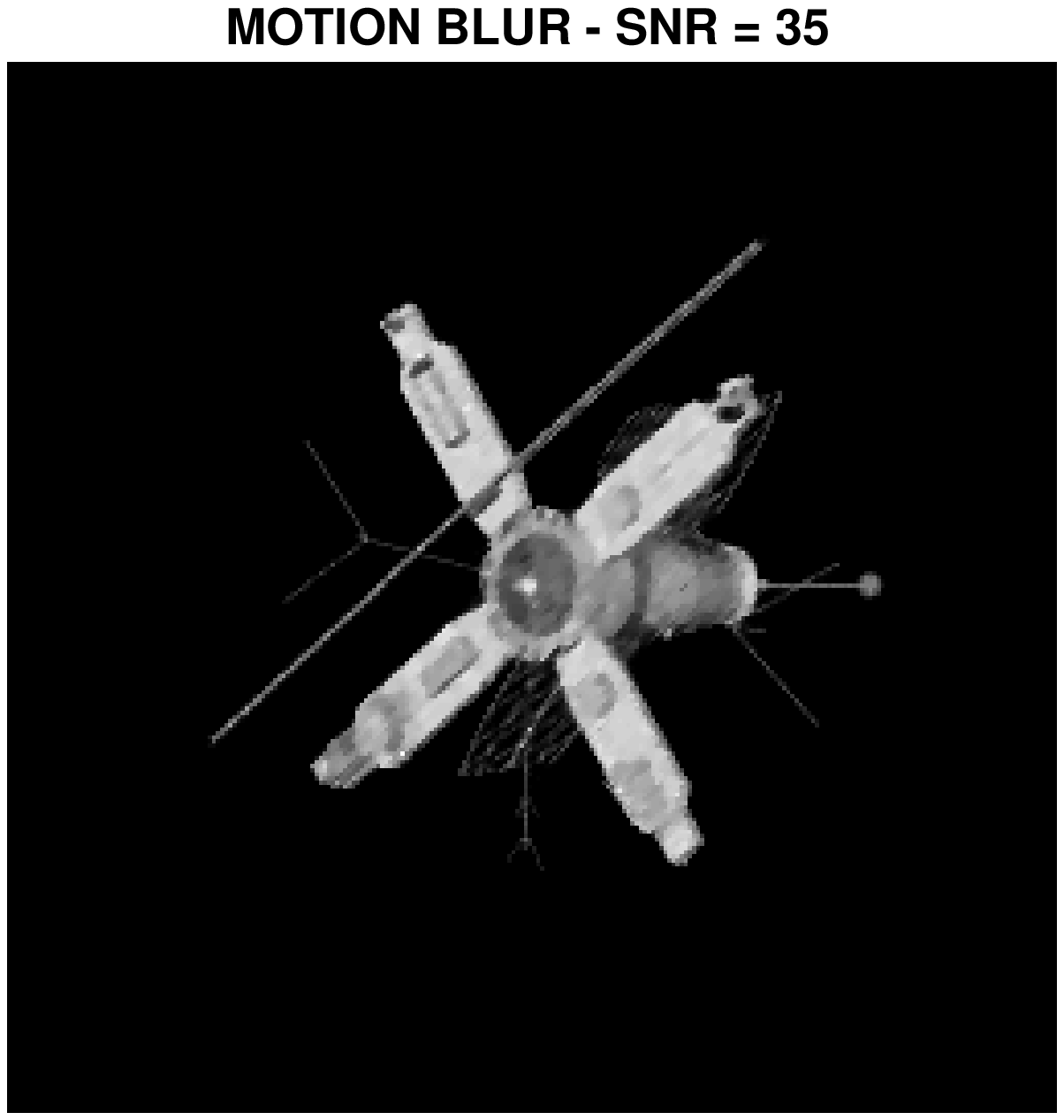}  \\[-4mm]
			\includegraphics[width=.47\textwidth]{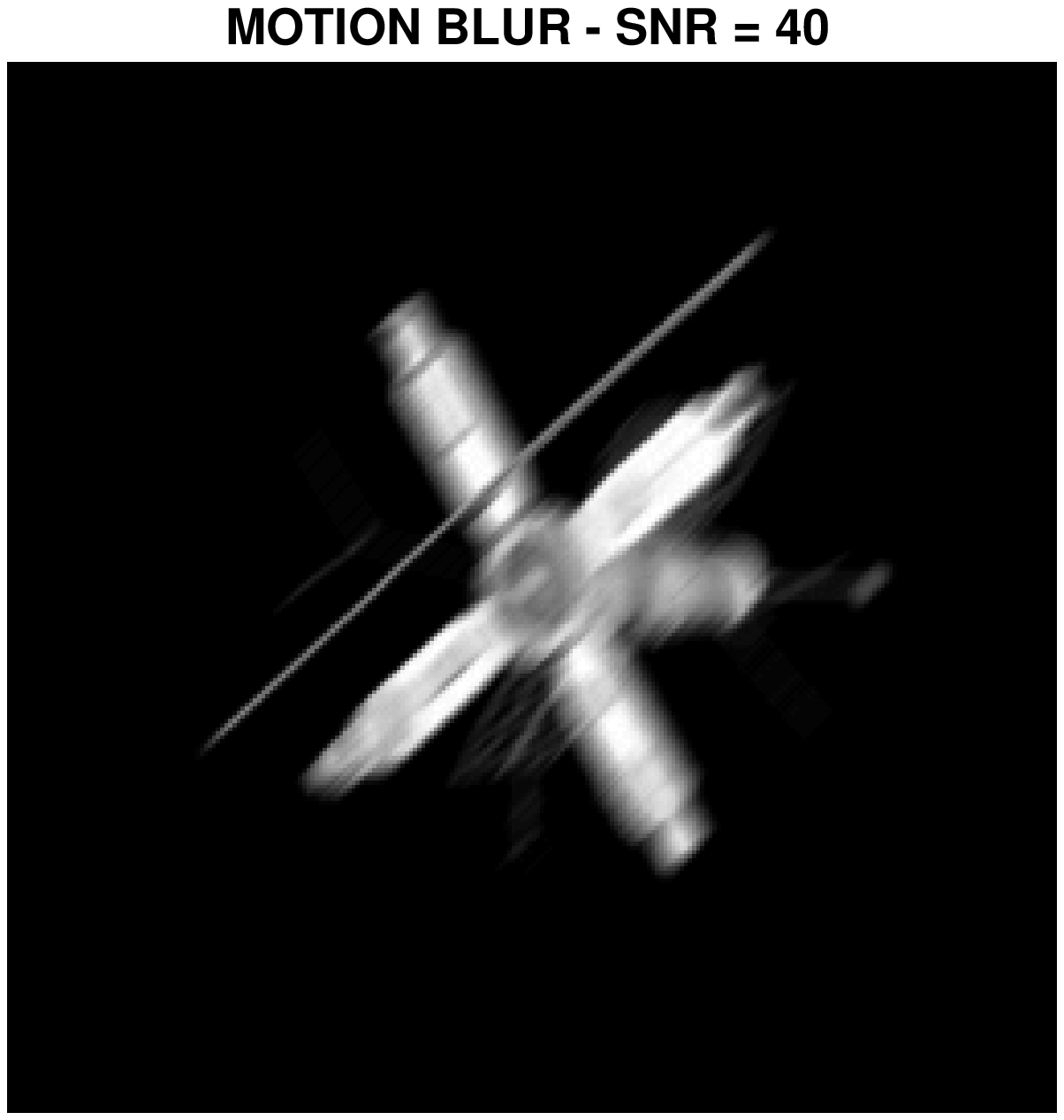}       &
			\hspace*{-1.5cm}
			\includegraphics[width=.47\textwidth]{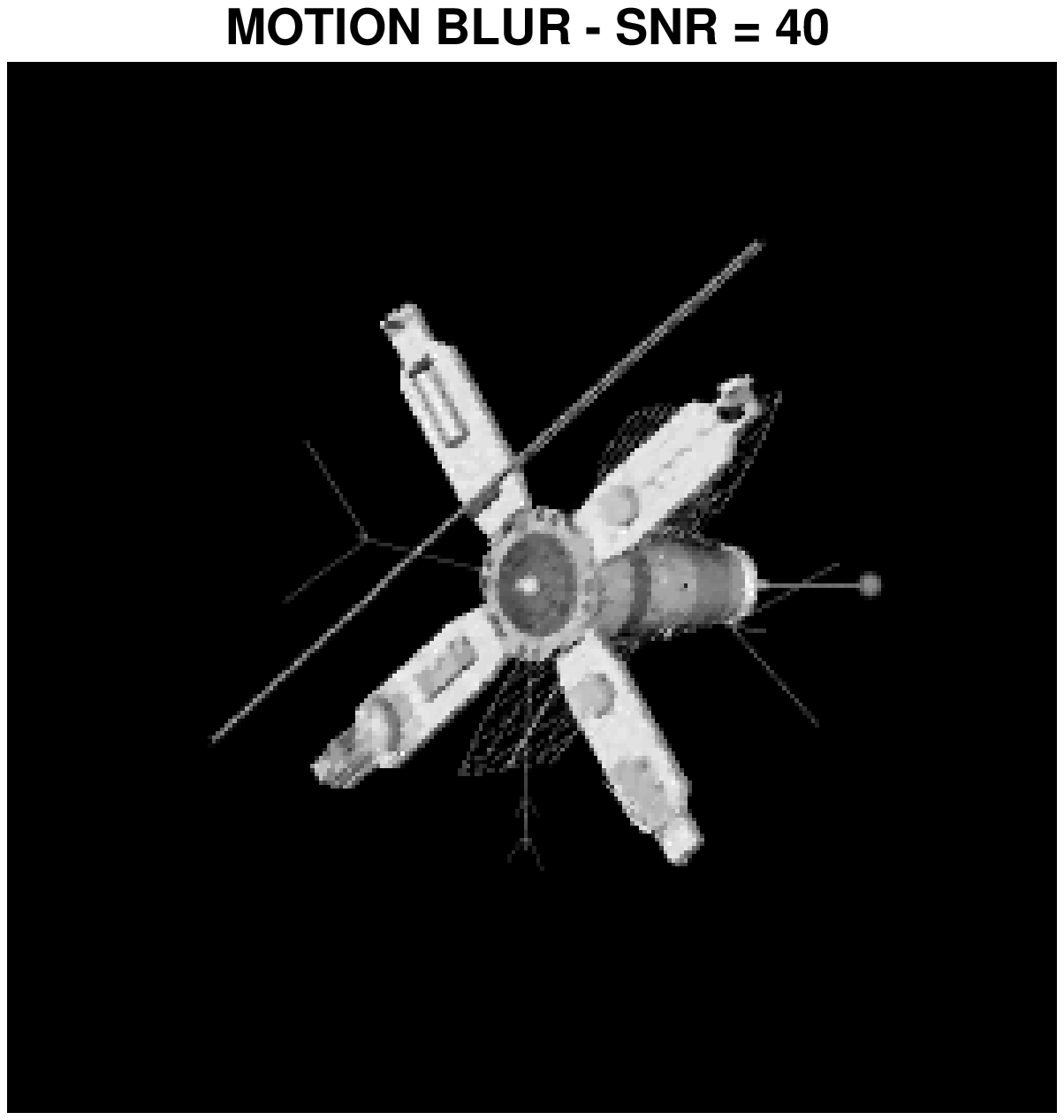}
		\end{tabular}
		\vspace*{-6mm}
		\caption{Satellite: images corrupted by motion blur and Poisson noise (left) and images
		 restored by ACQUIRE (right).  %Top: SNR = 35, bottom: SNR = 40.
		 \label{fig:satellite_mot}}
	\end{center}
\end{figure}

\clearpage

\begin{figure}[t!]
	\vspace*{2mm}
	\begin{center}
		\hspace*{-.55cm}
		\begin{tabular}{ccc}
			\includegraphics[width=.47\textwidth]{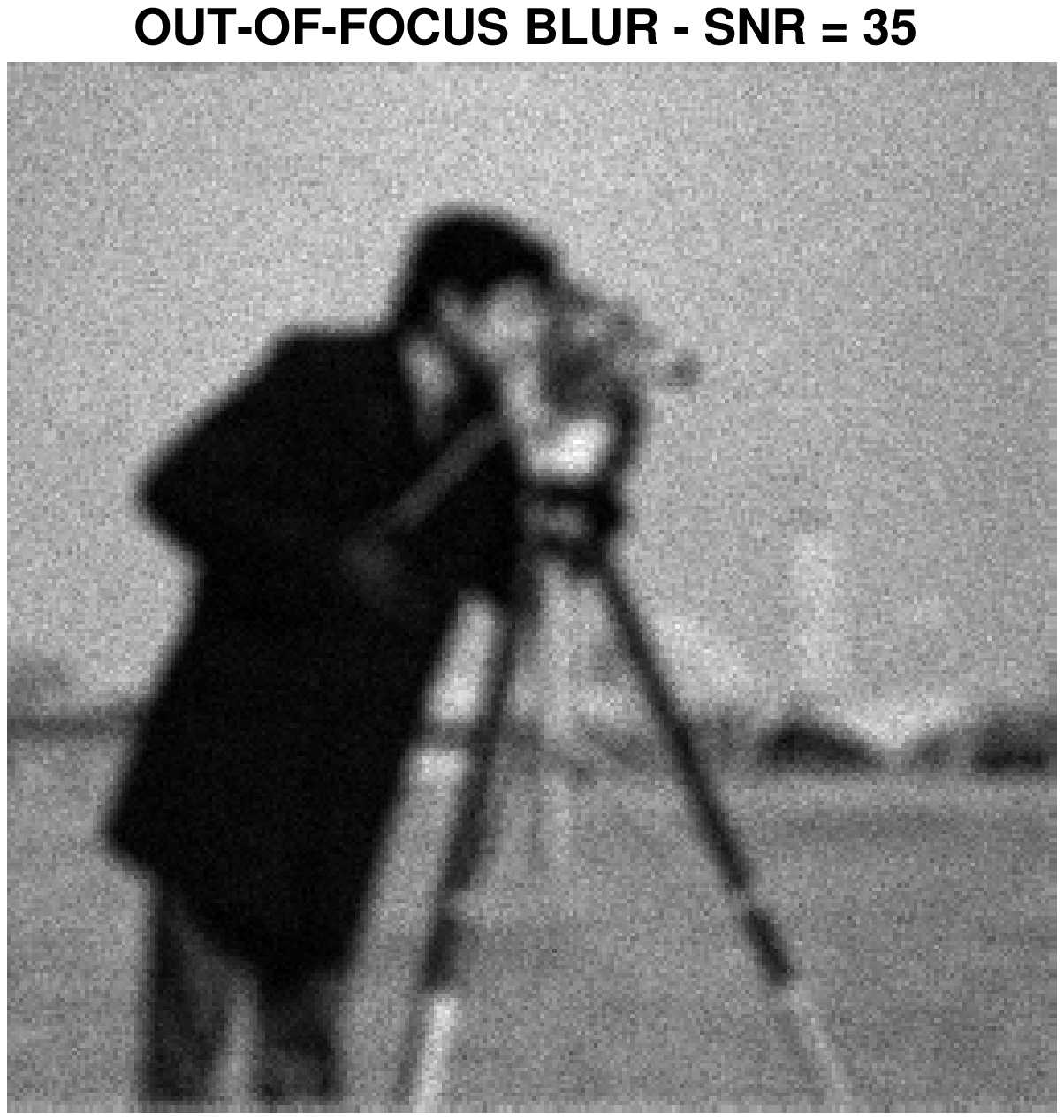}      &
			\hspace*{-1.5cm}
			\includegraphics[width=.47\textwidth]{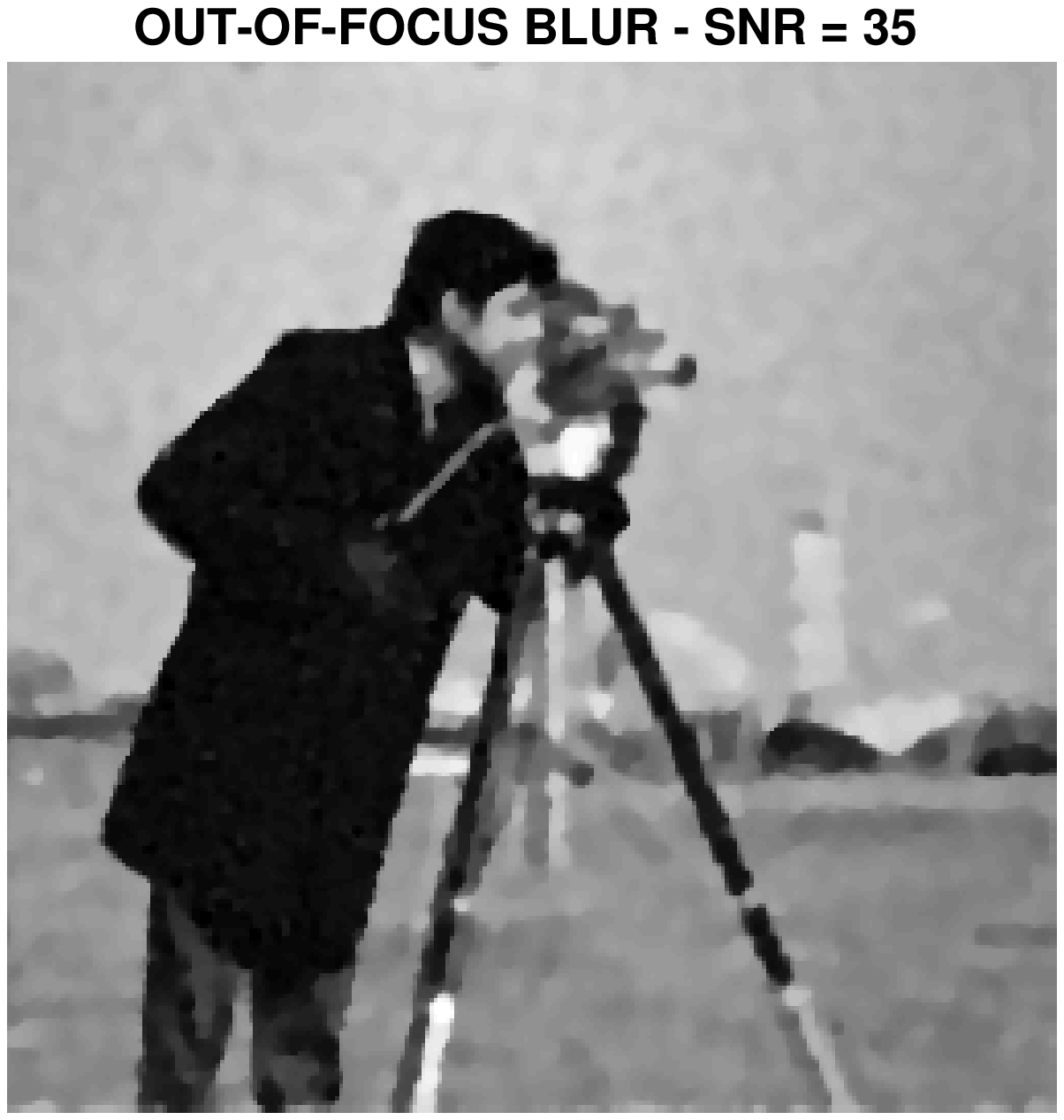}  \\[-4mm]
			\includegraphics[width=.47\textwidth]{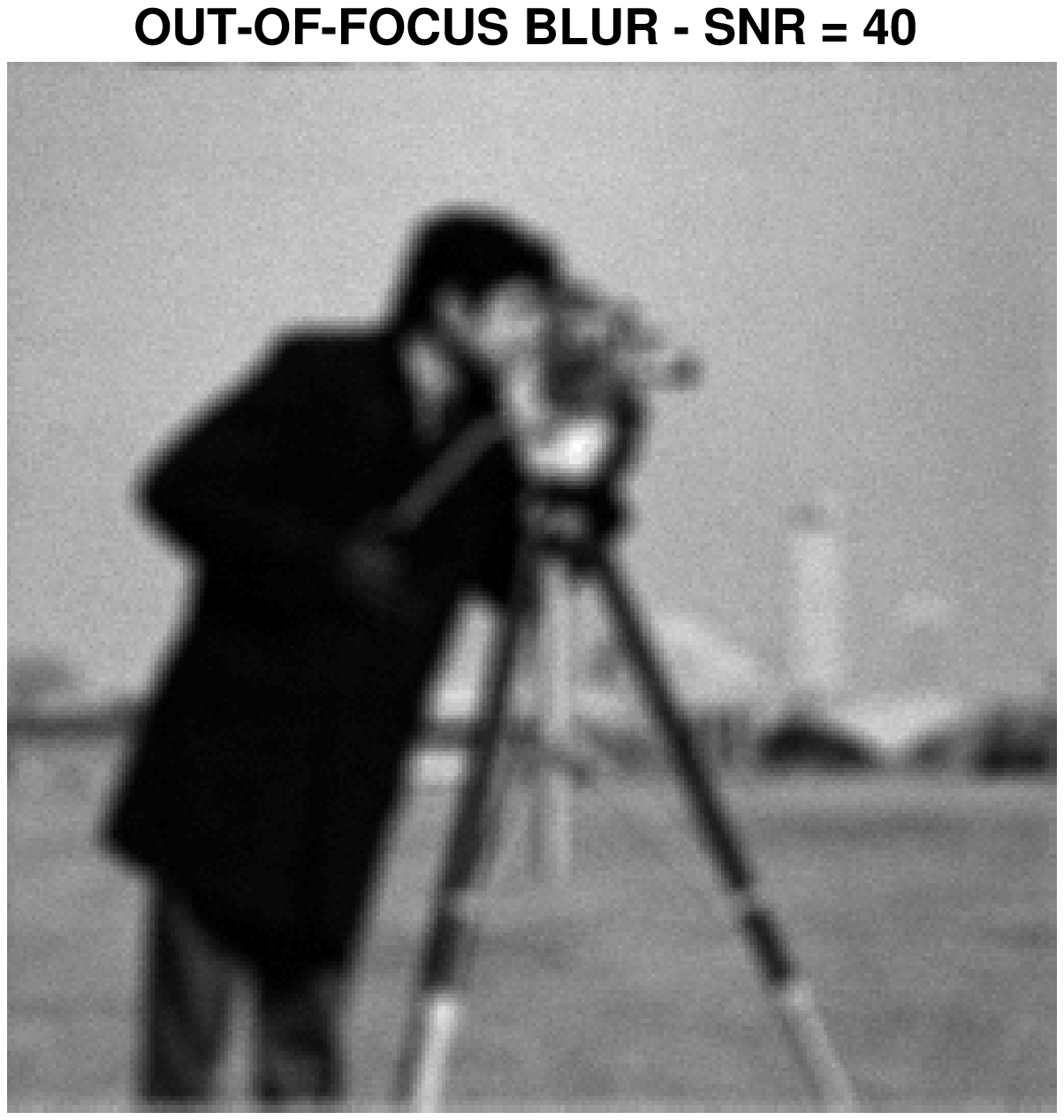}       &
			\hspace*{-1.5cm}
			\includegraphics[width=.47\textwidth]{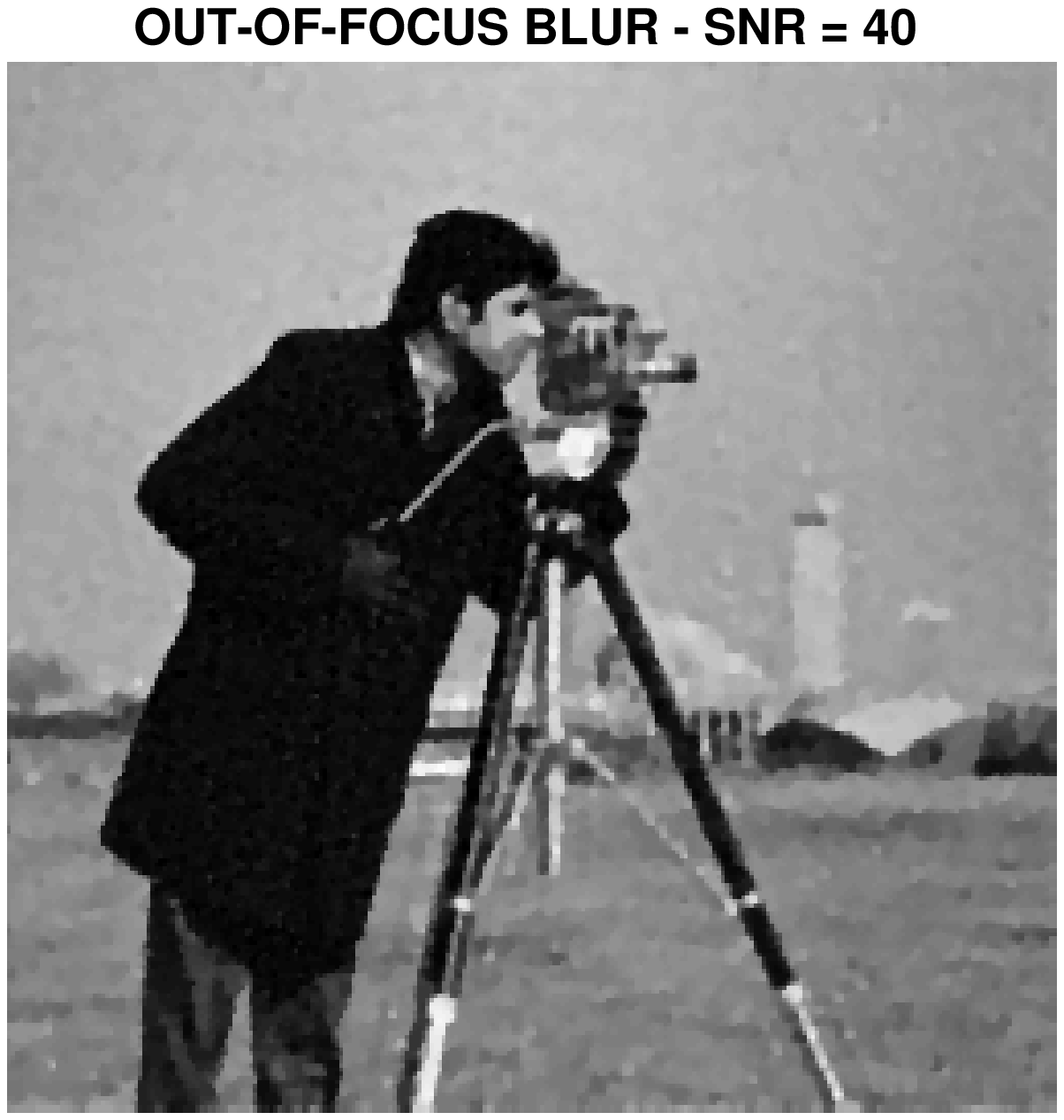}
		\end{tabular}
		\vspace*{-6mm}
		\caption{Cameraman: images corrupted by out-of-focus blur and Poisson noise (left) and images
		 restored by ACQUIRE (right).  %Top: SNR = 35, bottom: SNR = 40.
		 \label{fig:cameraman_of}}
	\end{center}
\end{figure}

\begin{figure}[b!]
	\vspace*{2mm}
	\begin{center}
		\hspace*{-.55cm}
		\begin{tabular}{ccc}
			\includegraphics[width=.47\textwidth]{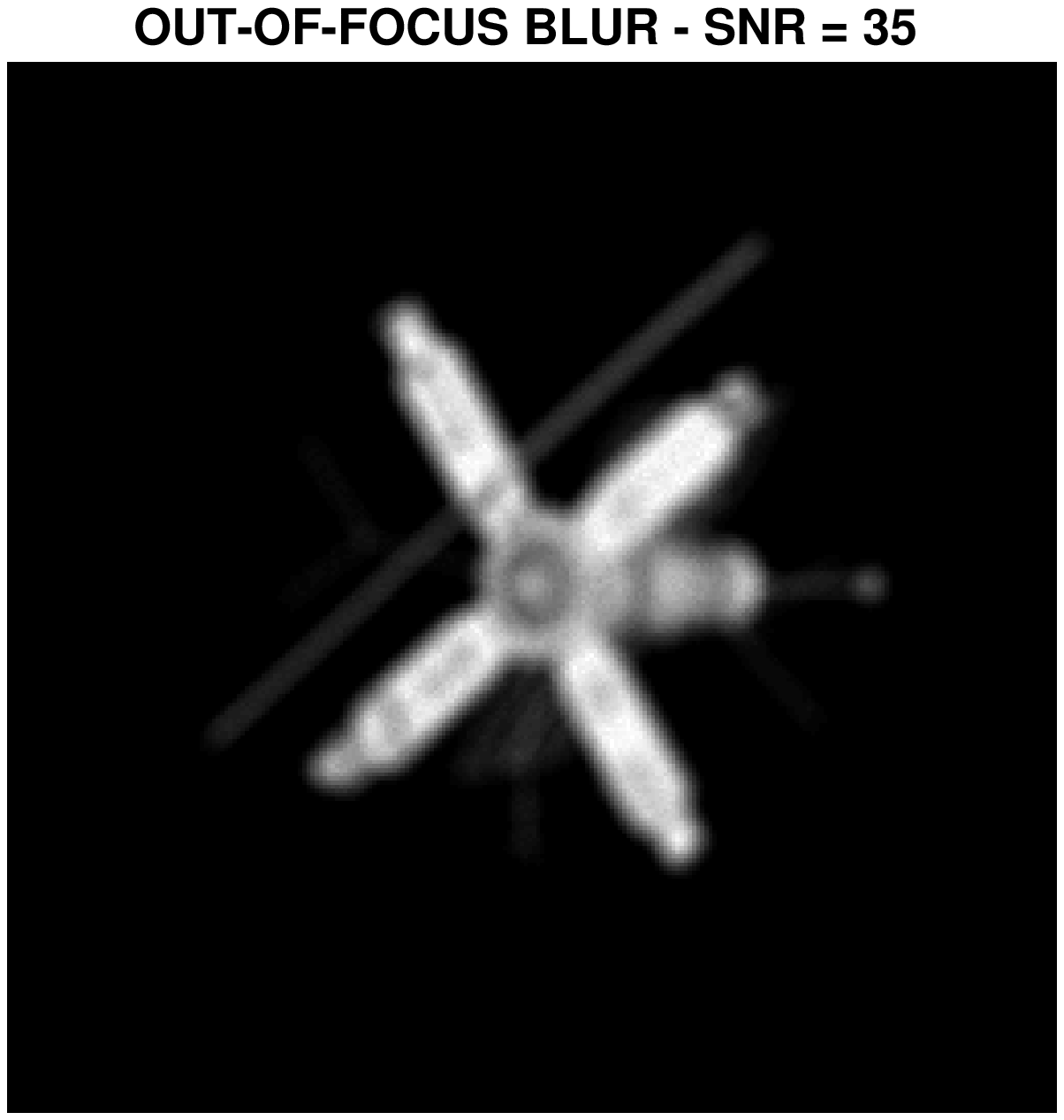}      &
			\hspace*{-1.5cm}
			\includegraphics[width=.47\textwidth]{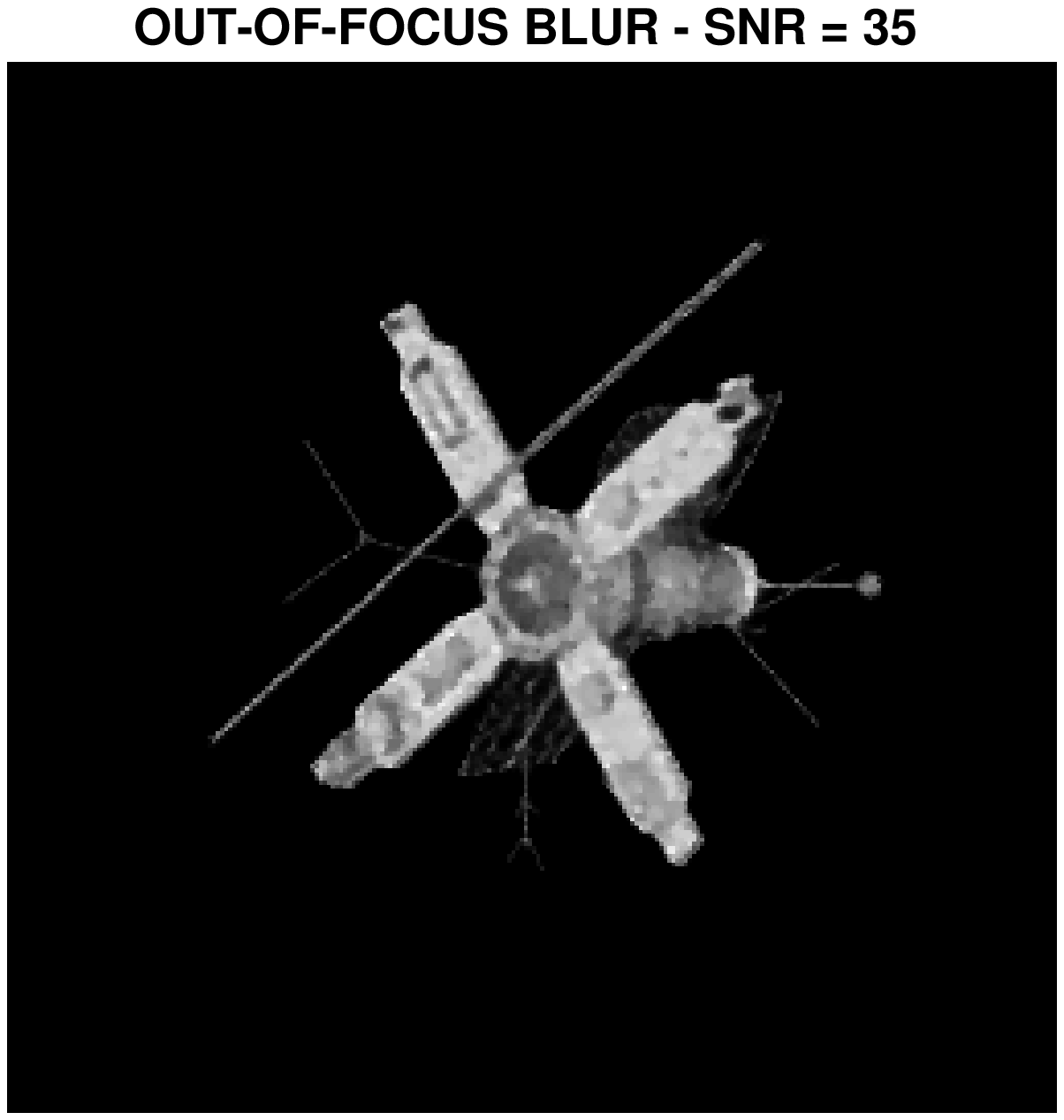}  \\[-4mm]
			\includegraphics[width=.47\textwidth]{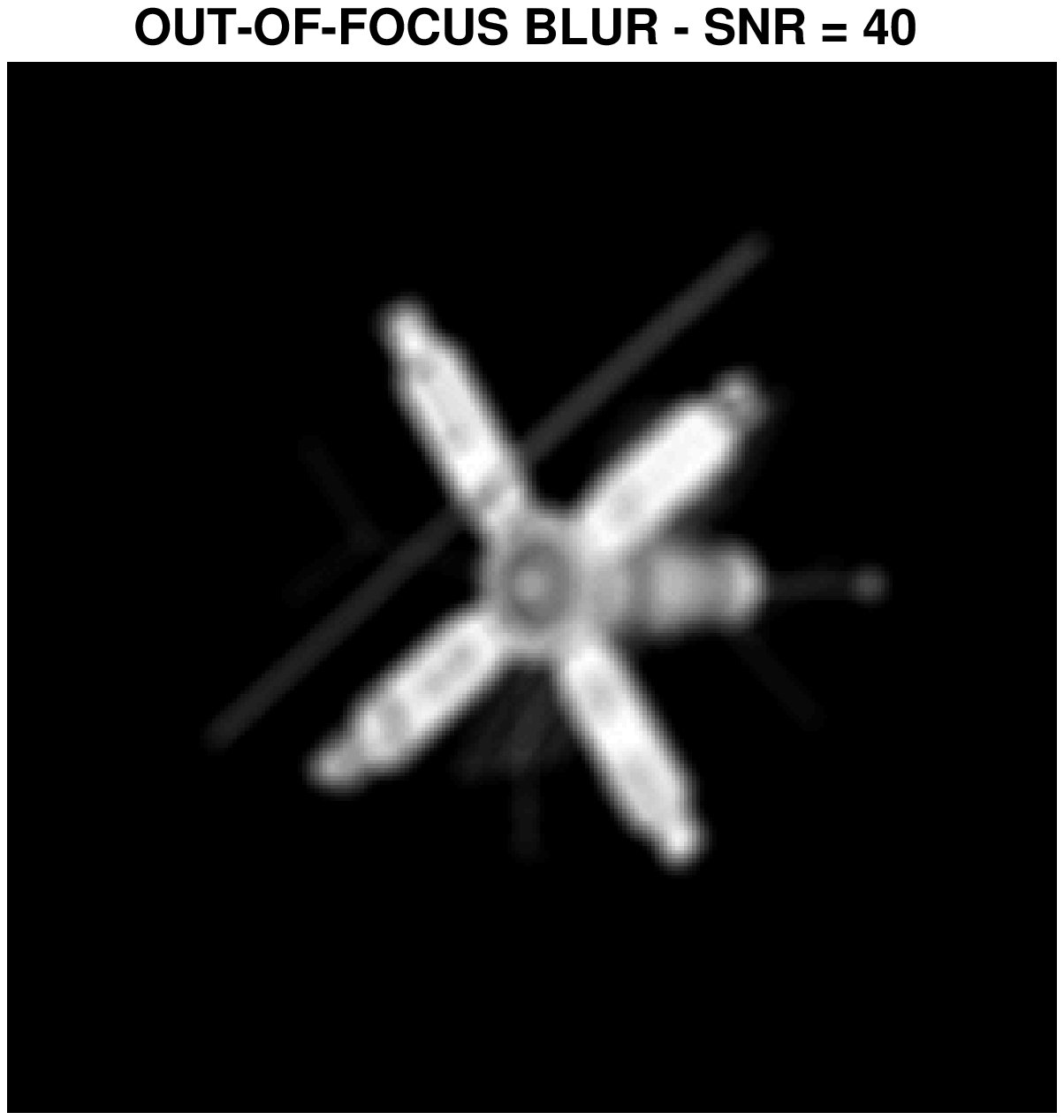}       &
			\hspace*{-1.5cm}
			\includegraphics[width=.47\textwidth]{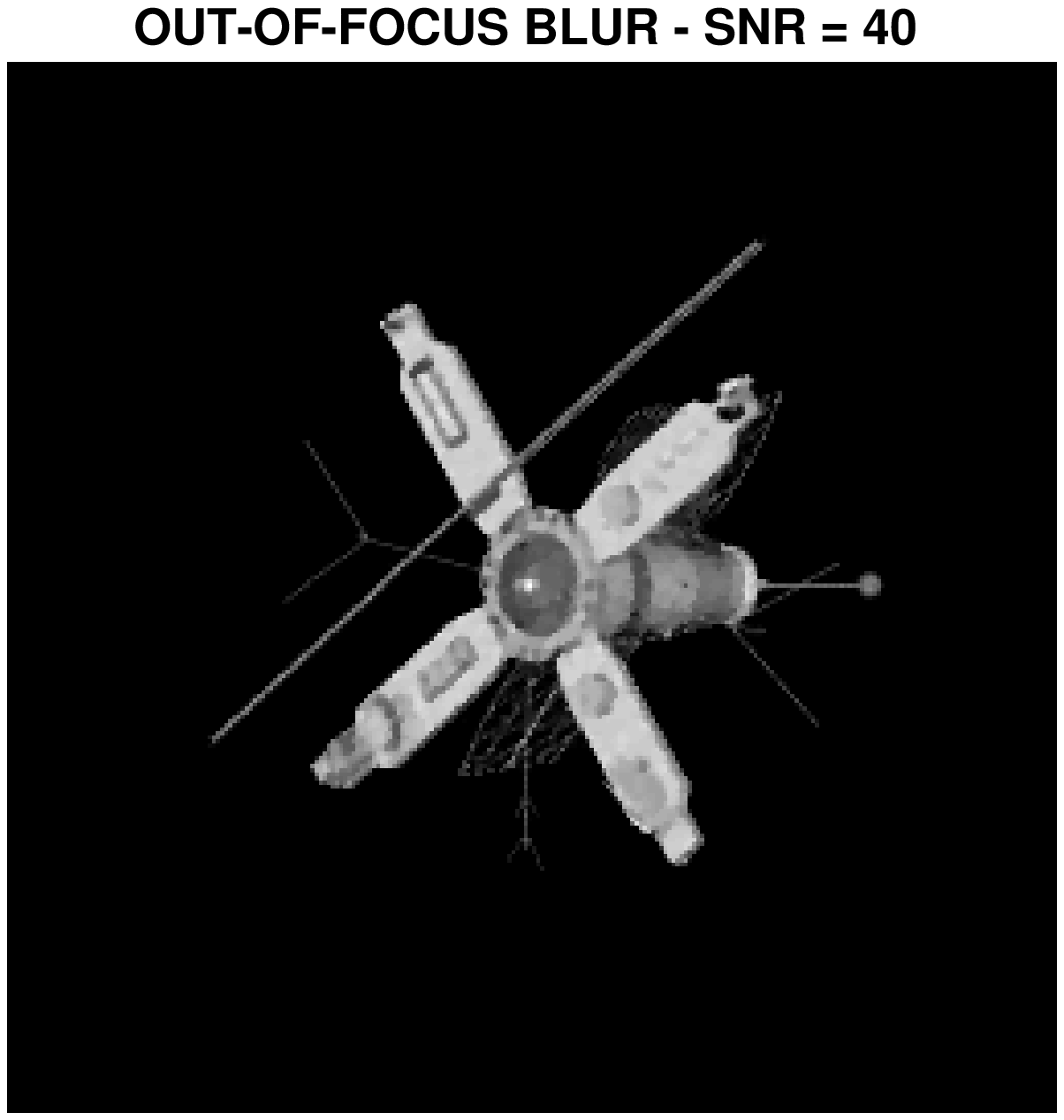}
		\end{tabular}
		\vspace*{-6mm}
		\caption{Satellite: images corrupted by out-of-focus blur and Poisson noise (left) and images
		 restored by ACQUIRE (right).  %Top: SNR = 35, bottom: SNR = 40.
		 \label{fig:satellite_of}}
	\end{center}
\end{figure}

\clearpage

\begin{figure}[p!]
	\begin{center}
		\begin{tabular}{cc}
			\includegraphics[width=.44\textwidth]{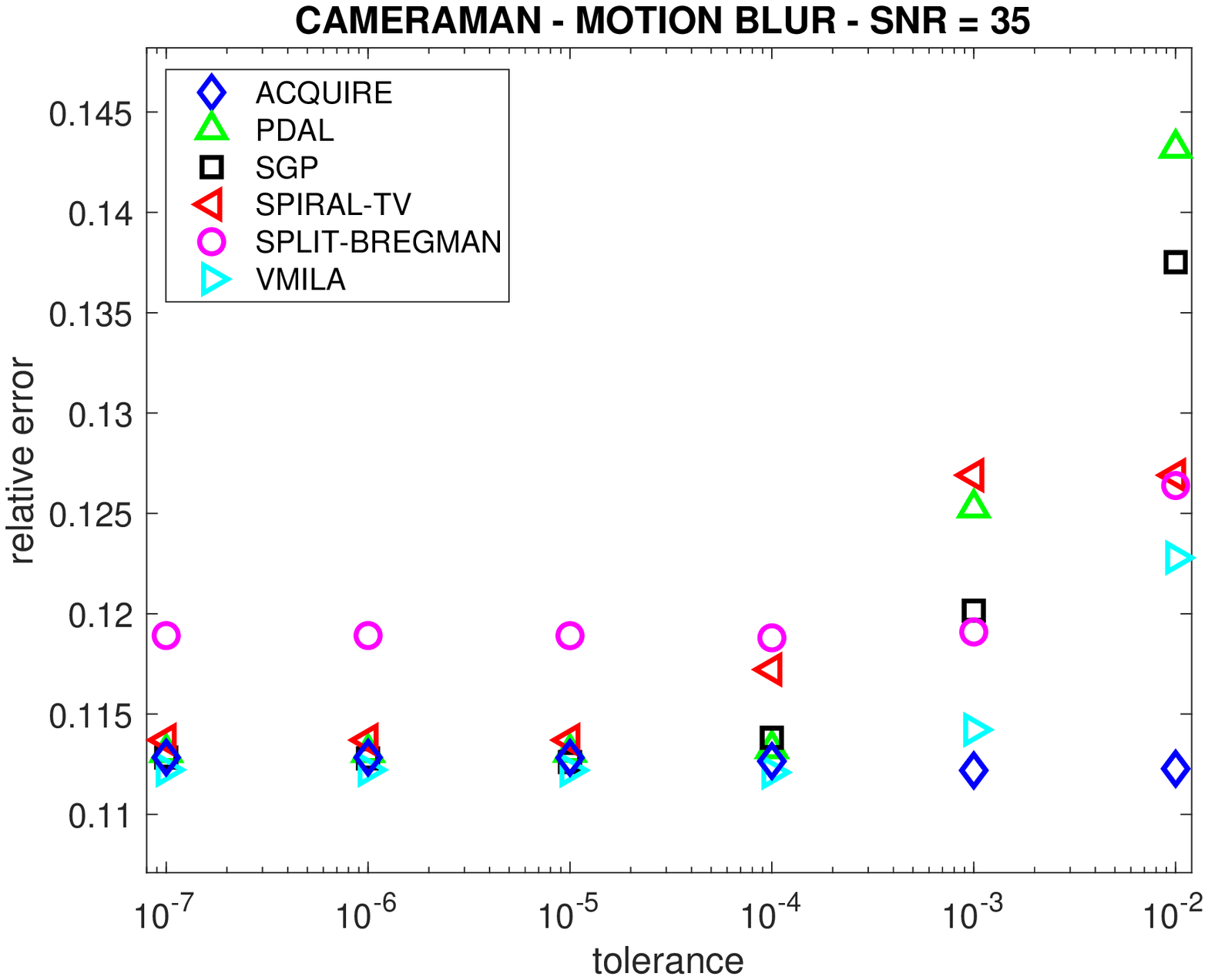}     &
			\includegraphics[width=.44\textwidth]{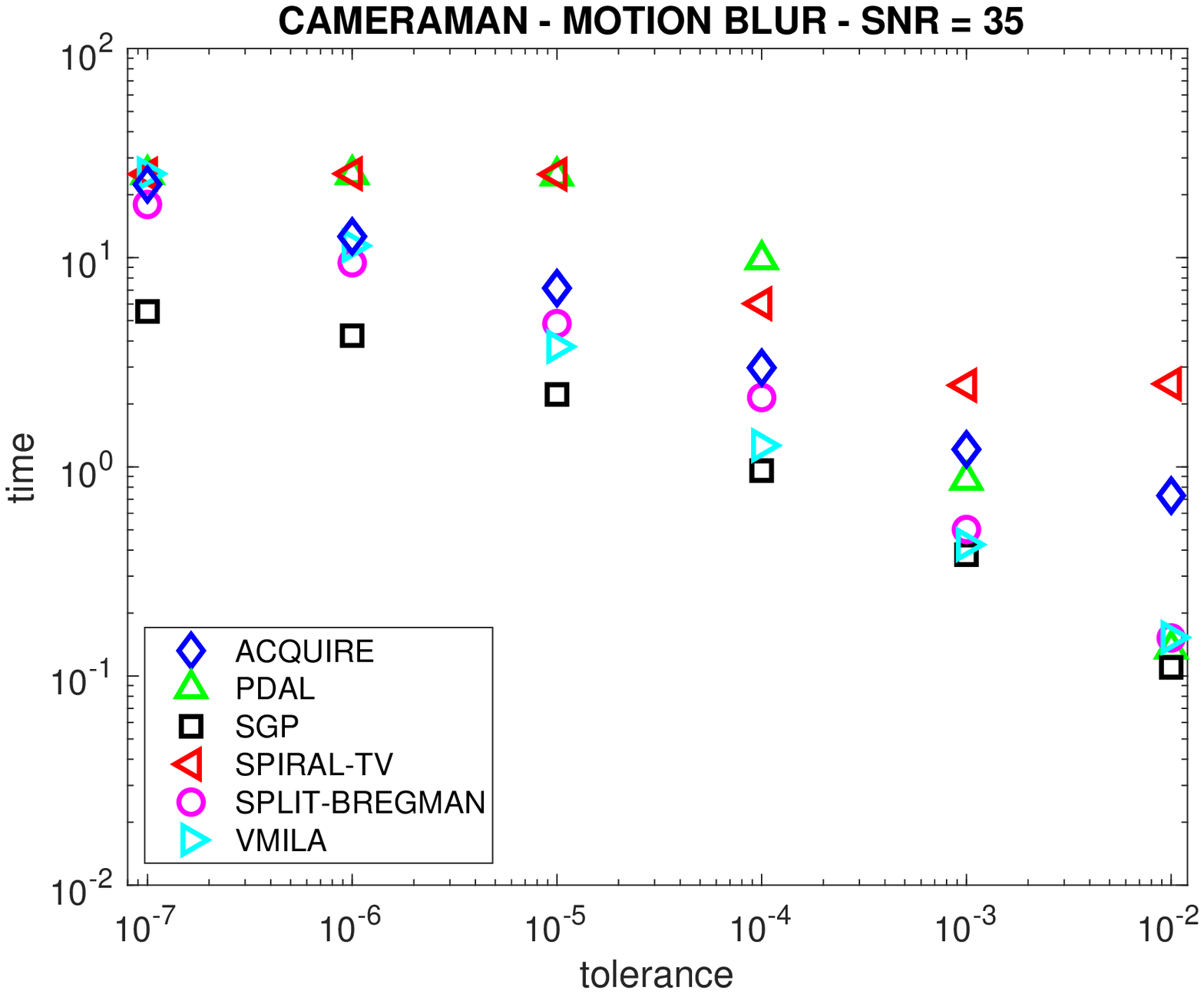}  \\[-1mm]
			\includegraphics[width=.44\textwidth]{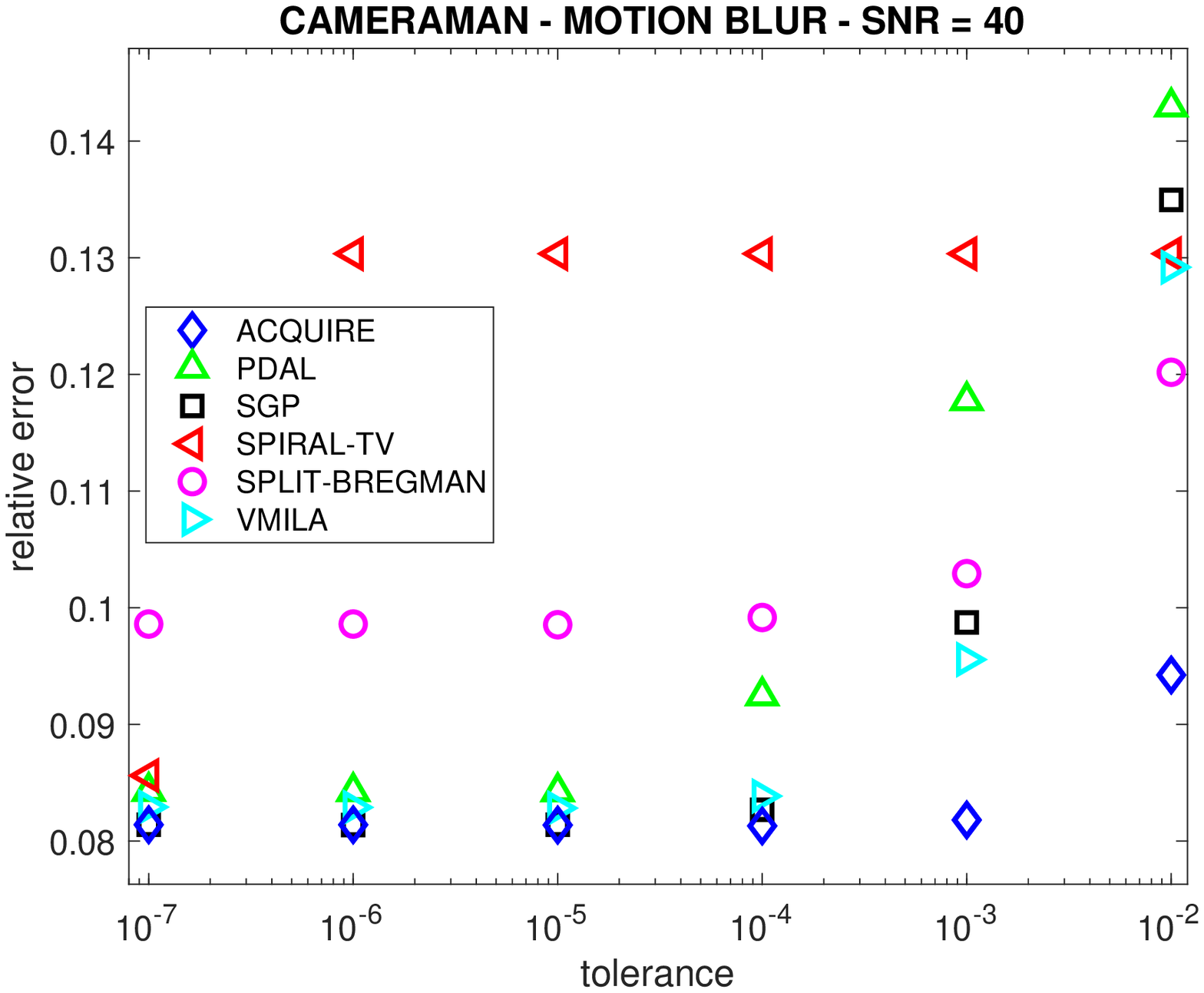}    &
			\includegraphics[width=.44\textwidth]{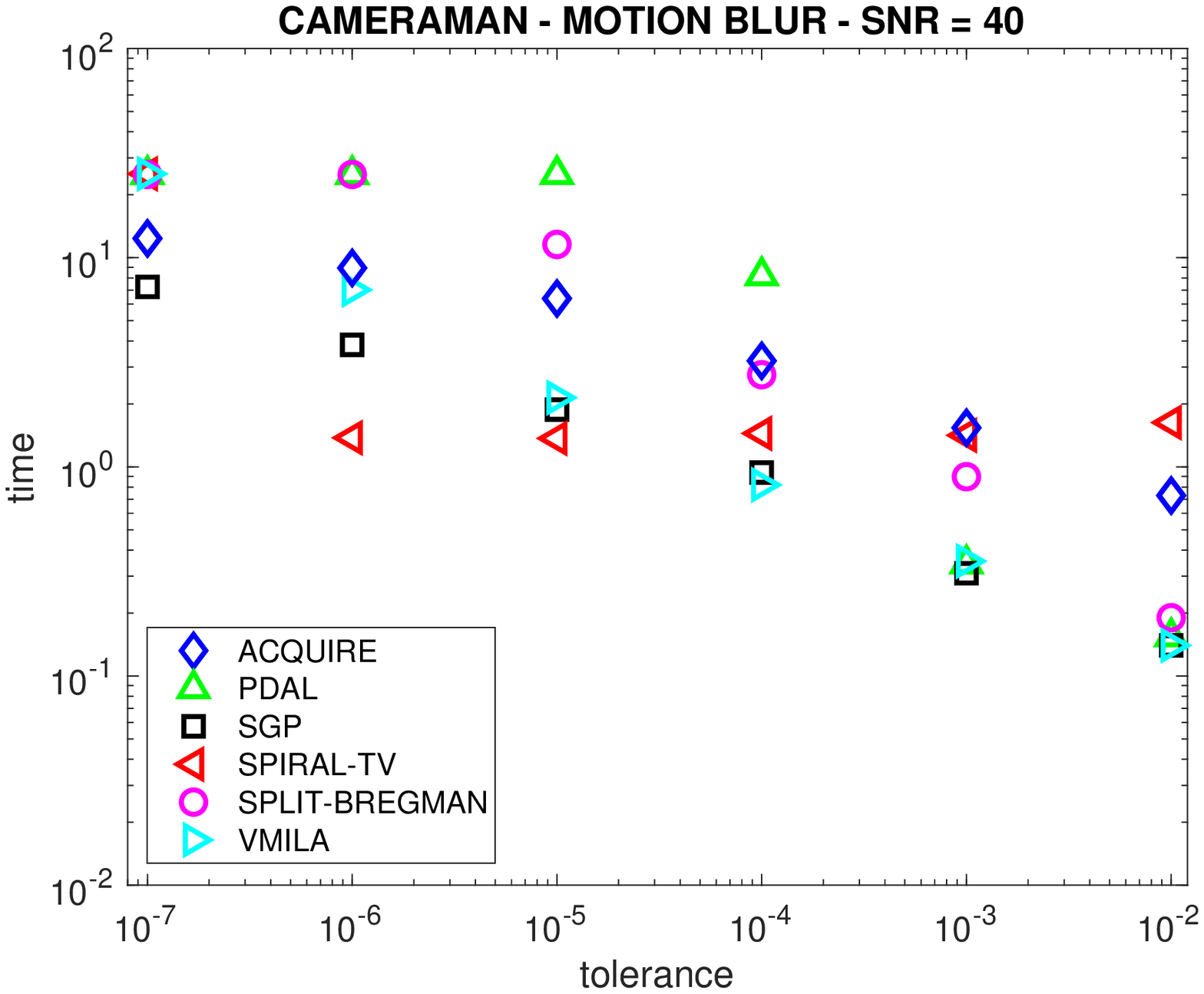}  \\[-1mm]
			\includegraphics[width=.44\textwidth]{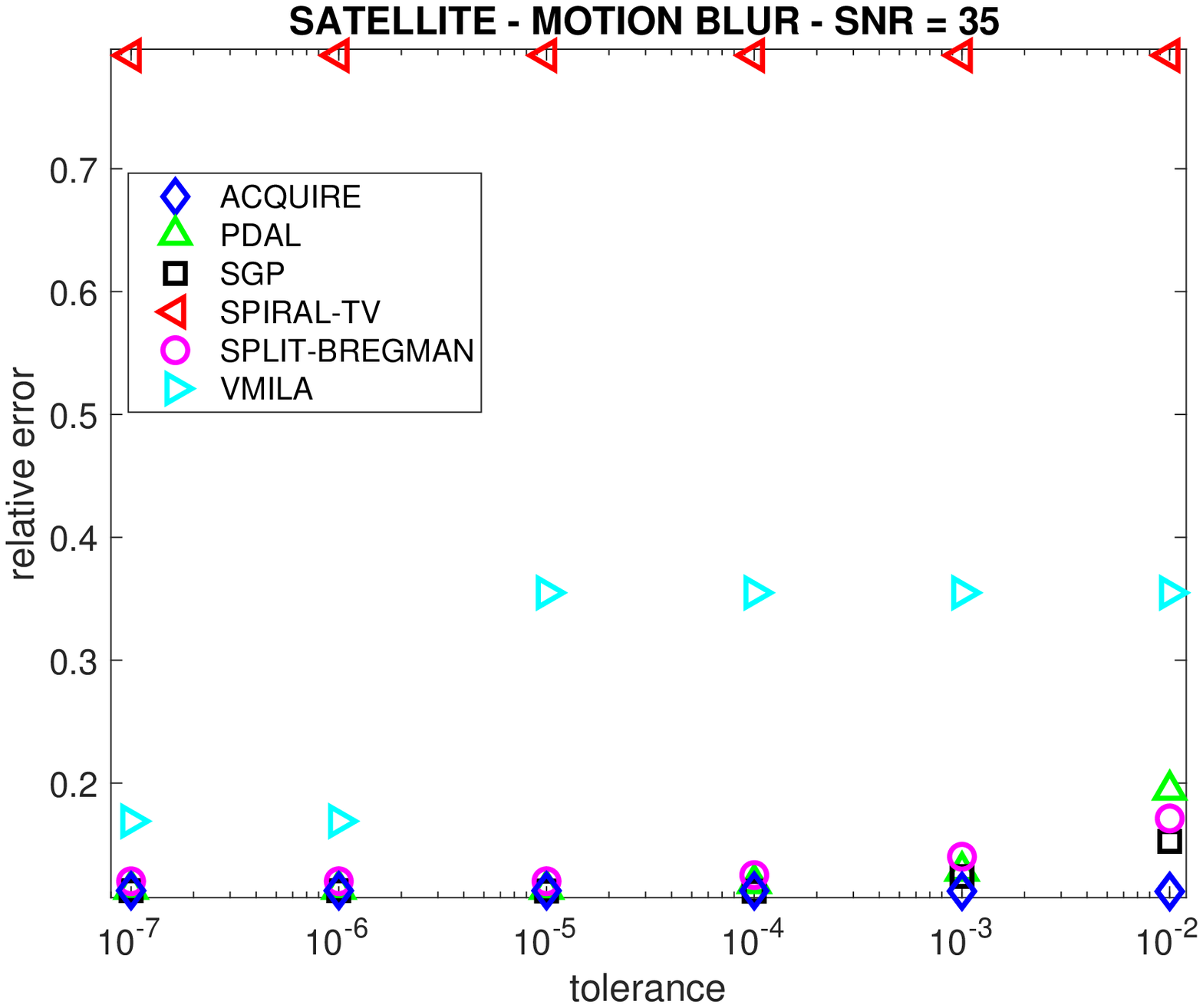}     &
			\includegraphics[width=.44\textwidth]{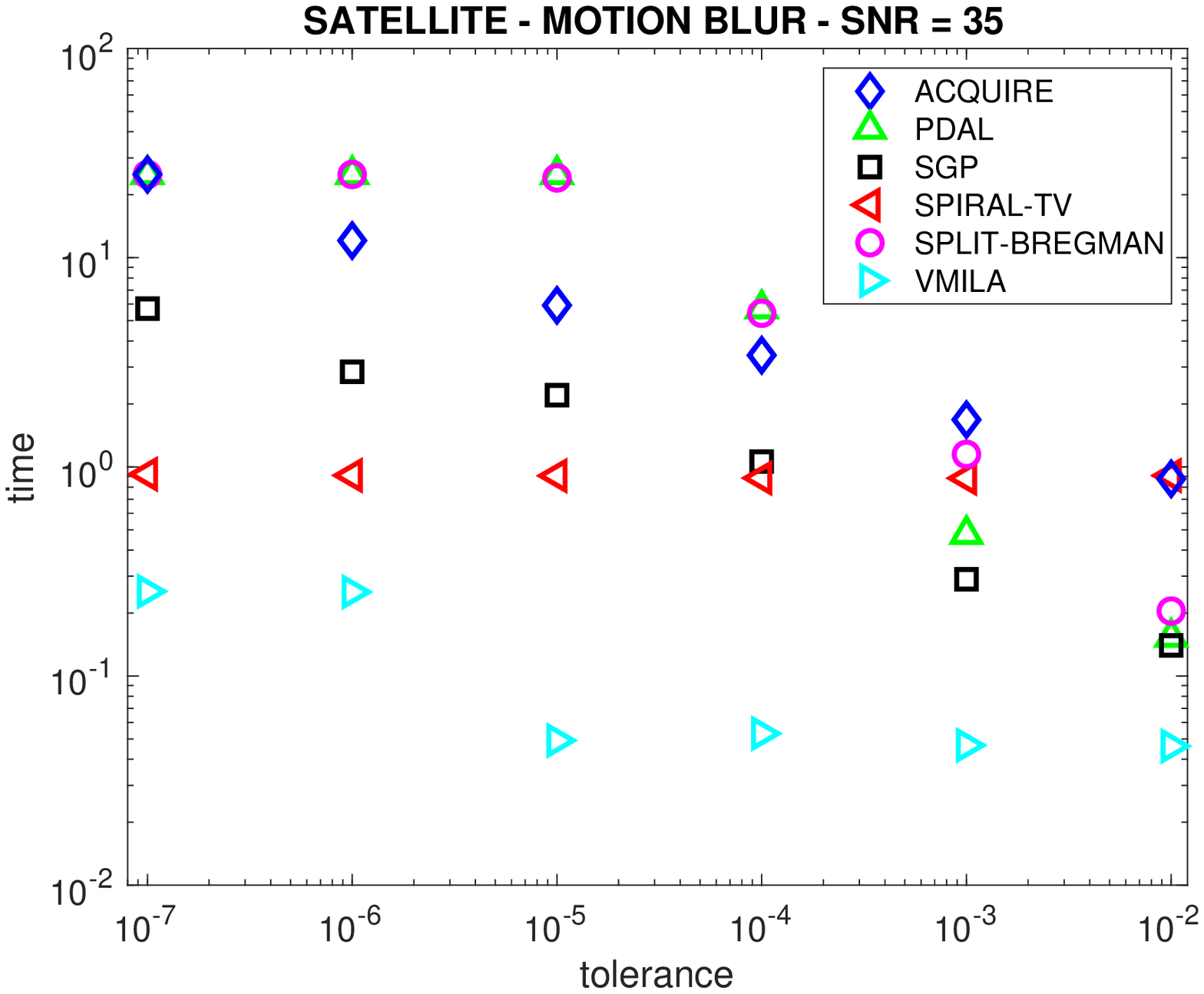}  \\[-1mm]
			\includegraphics[width=.44\textwidth]{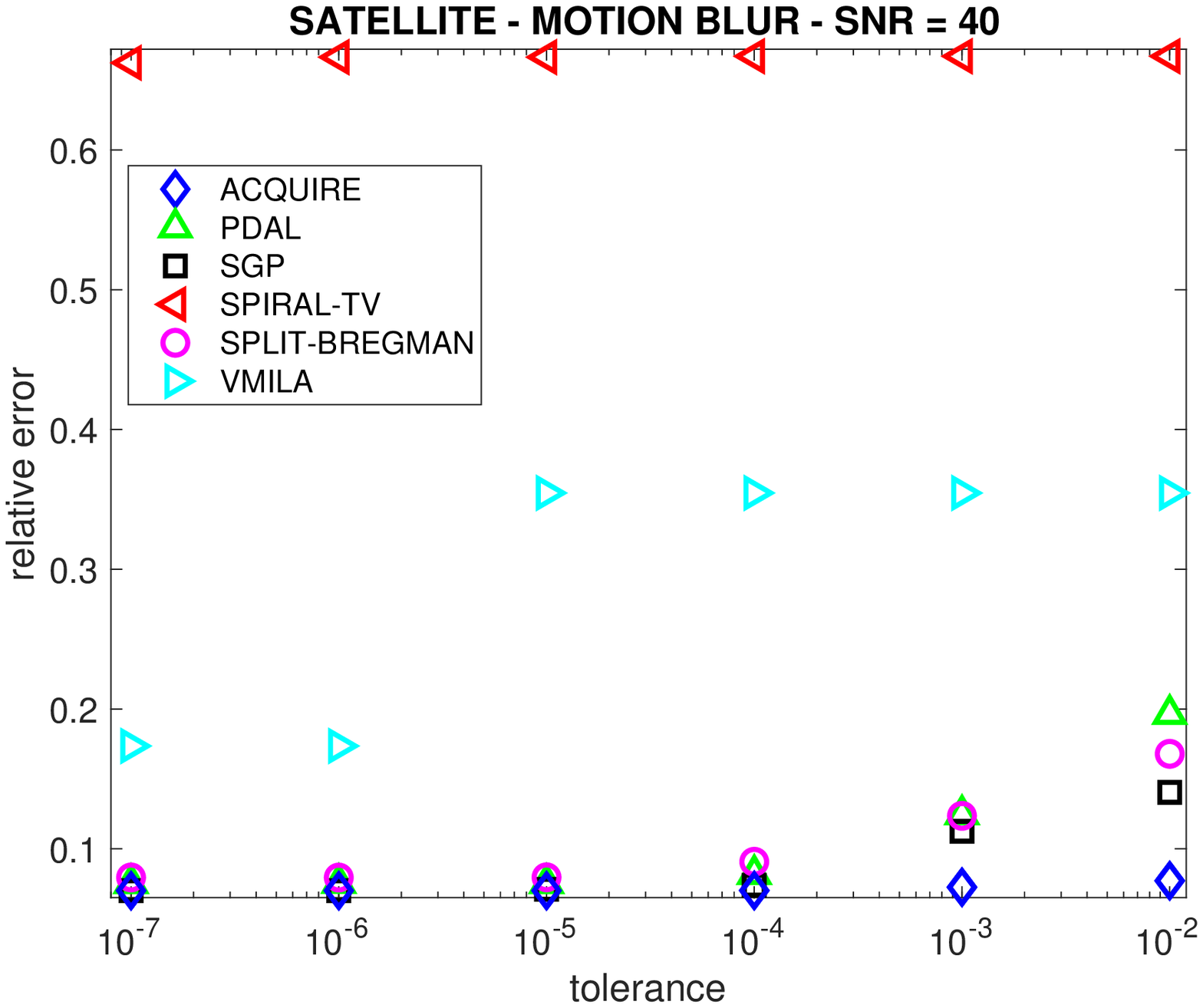}    &
			\includegraphics[width=.44\textwidth]{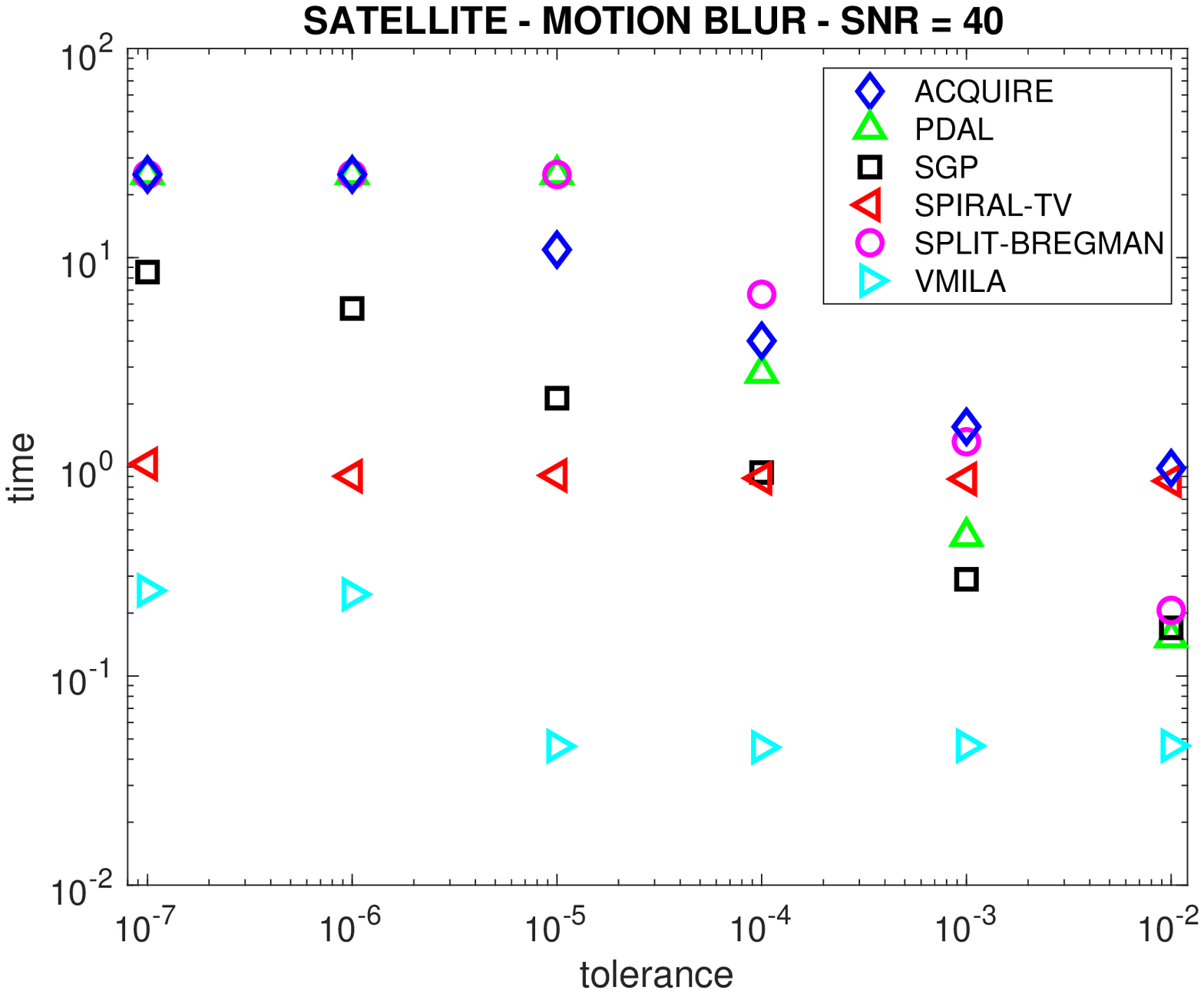}  \\[-1mm]
		\end{tabular}
		\vspace*{-2mm}
		\caption{Test set T2, motion blur, SNR $= 35,40$: relative error (left) and execution time (right) versus tolerance,
		for all the methods.\label{fig:compar_mot}}	
	\end{center}
\end{figure}

\clearpage

\begin{figure}[p!]
	\begin{center}
		\begin{tabular}{cc}
			\includegraphics[width=.44\textwidth]{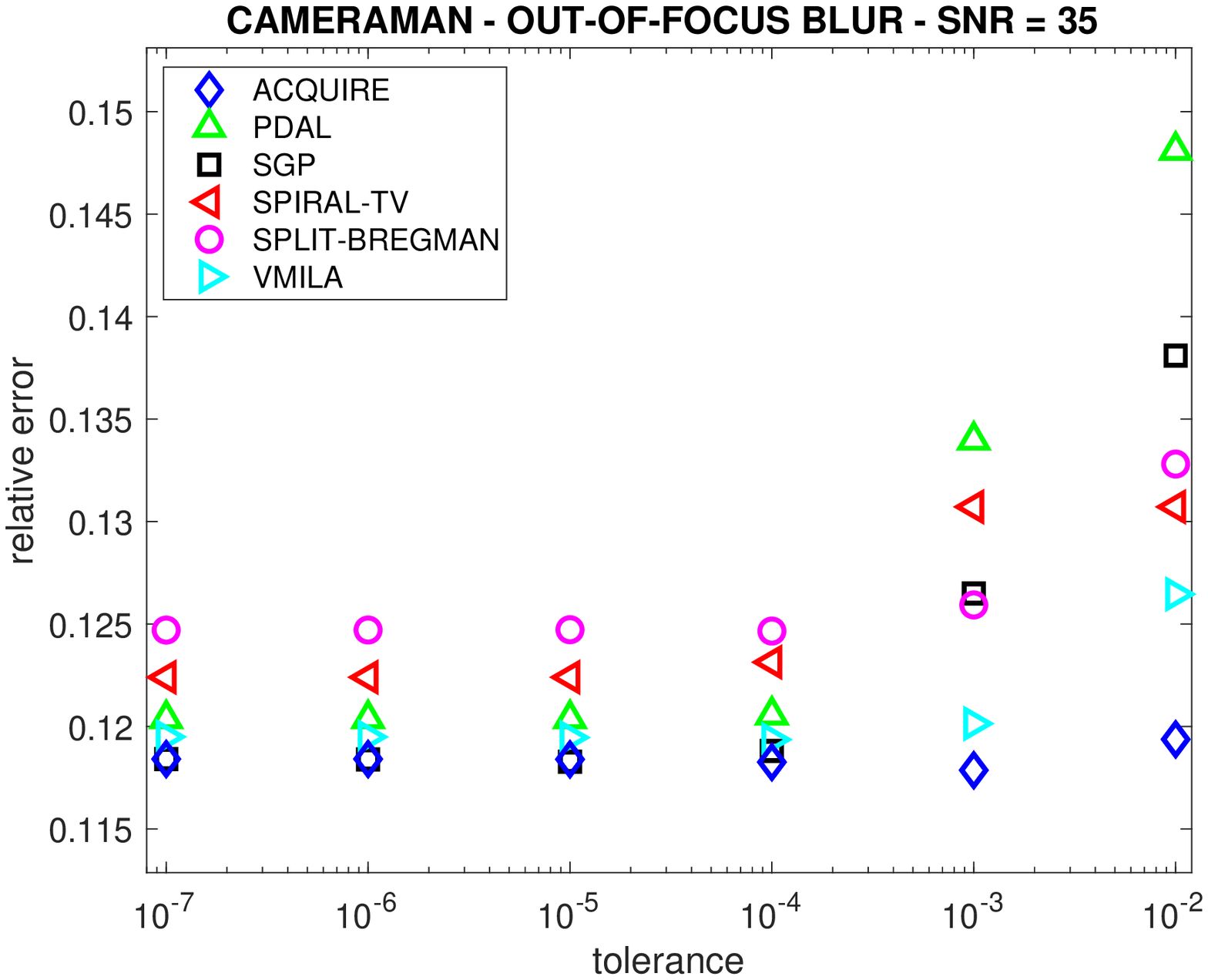}     &
			\includegraphics[width=.44\textwidth]{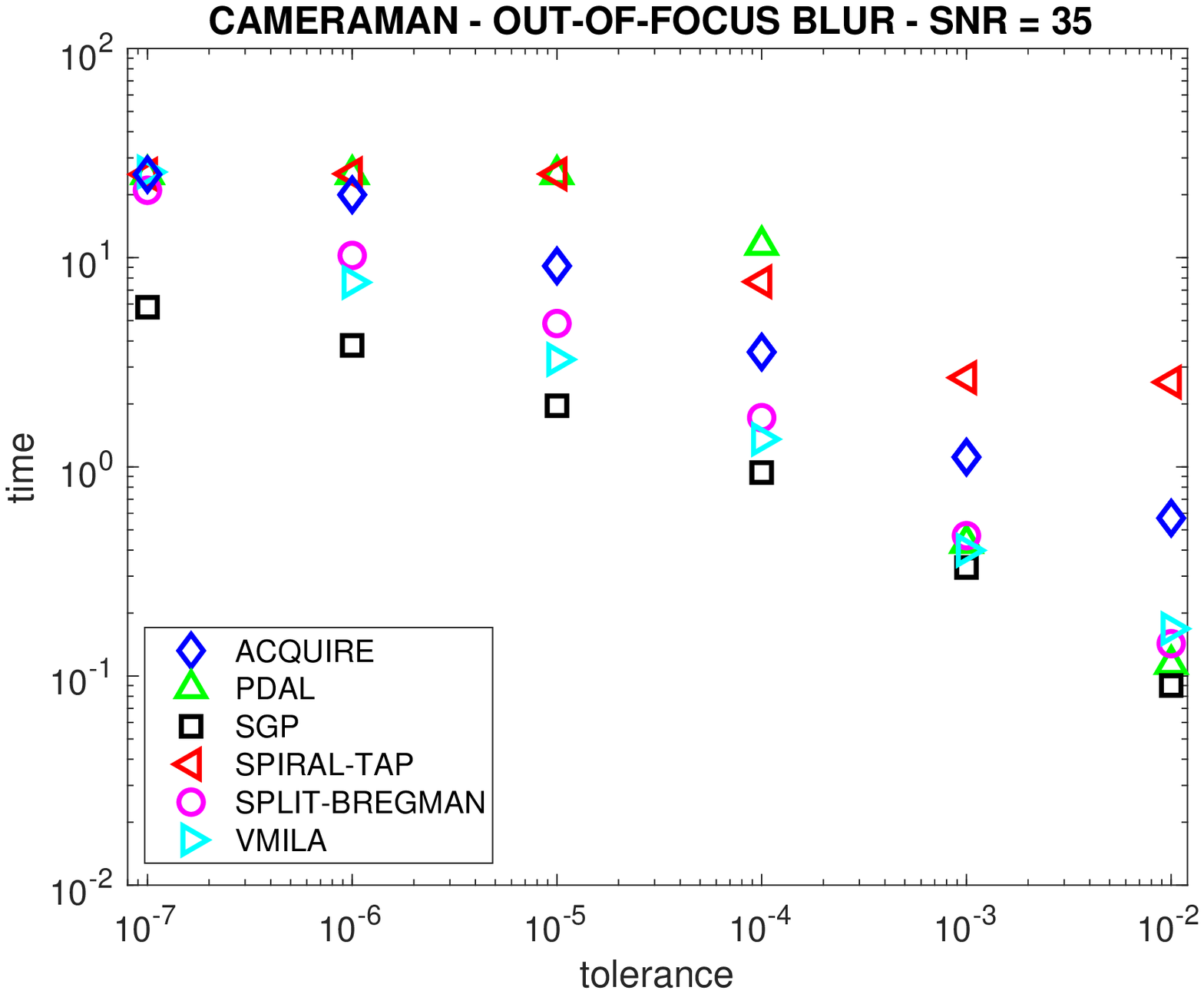}  \\[-1mm]
			\includegraphics[width=.44\textwidth]{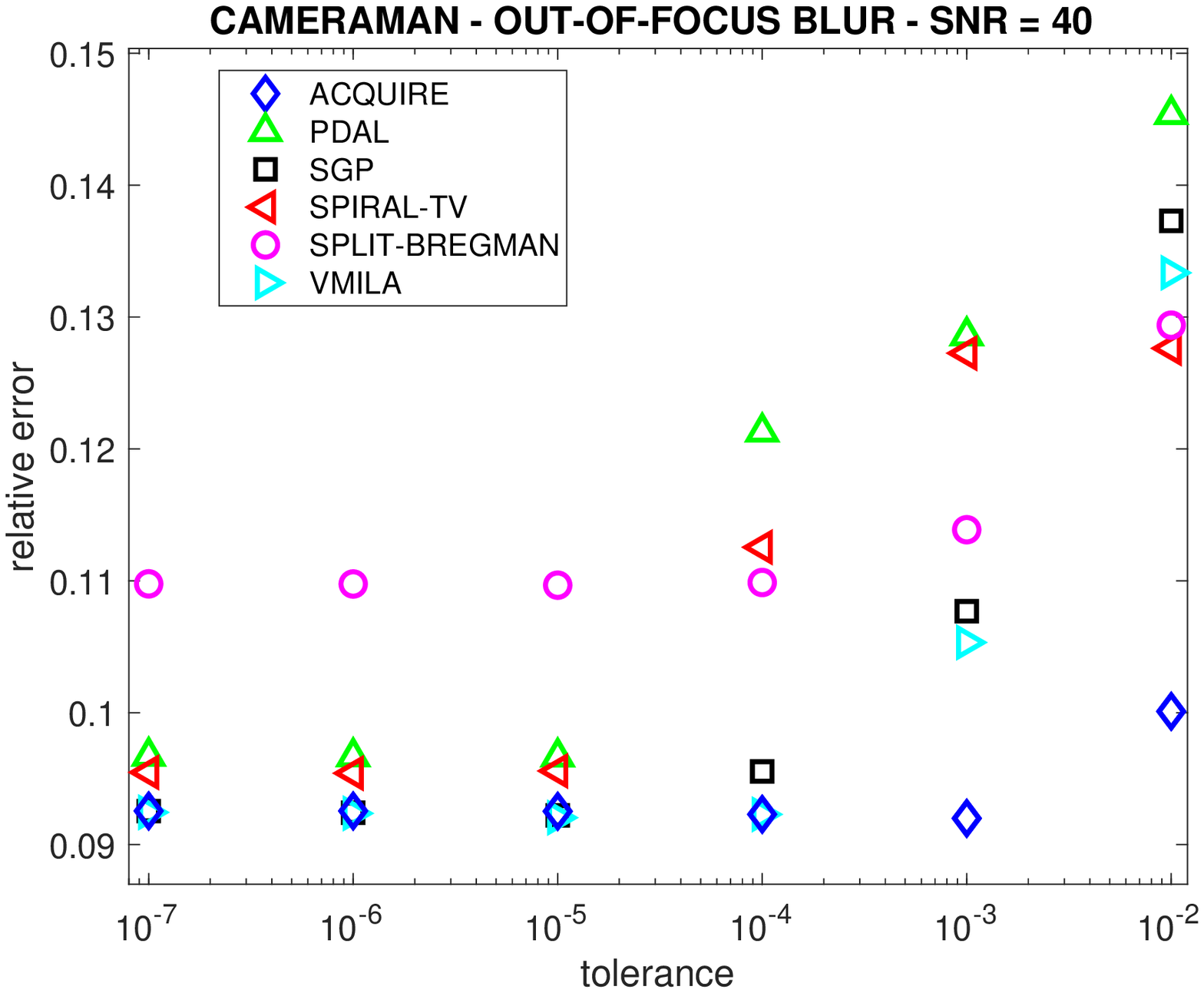}    &
			\includegraphics[width=.44\textwidth]{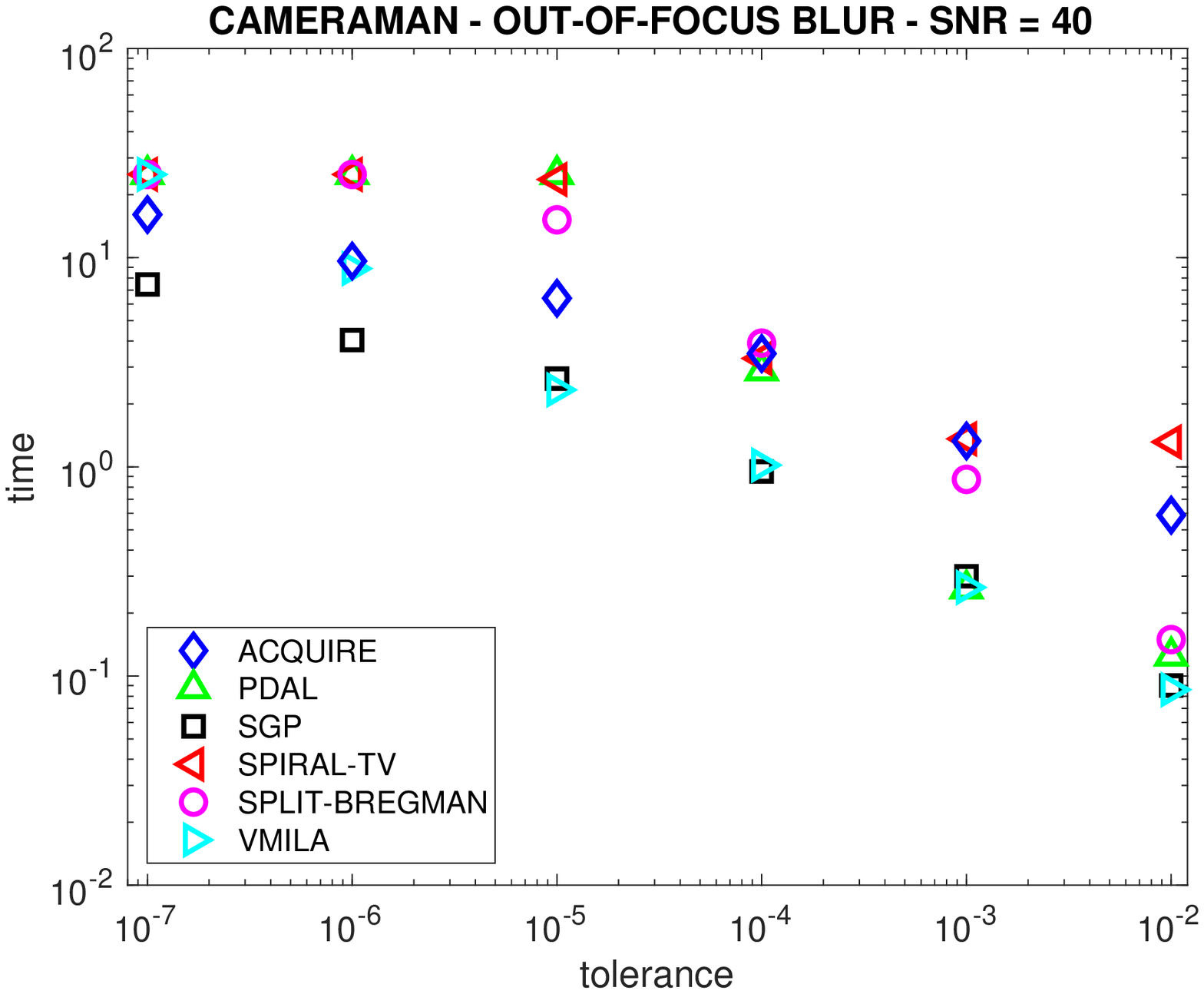}  \\[-1mm]
			\includegraphics[width=.44\textwidth]{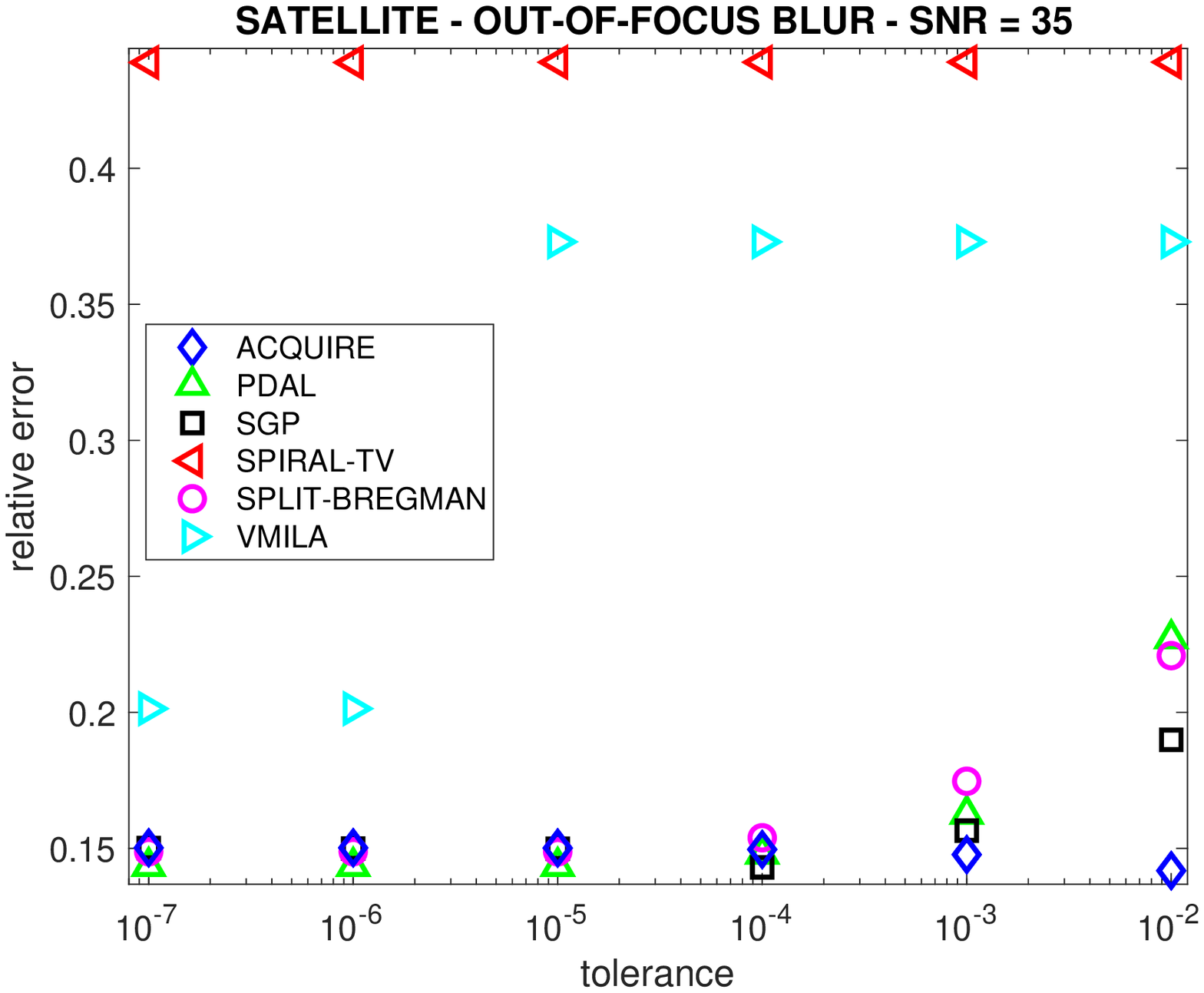}     &
			\includegraphics[width=.44\textwidth]{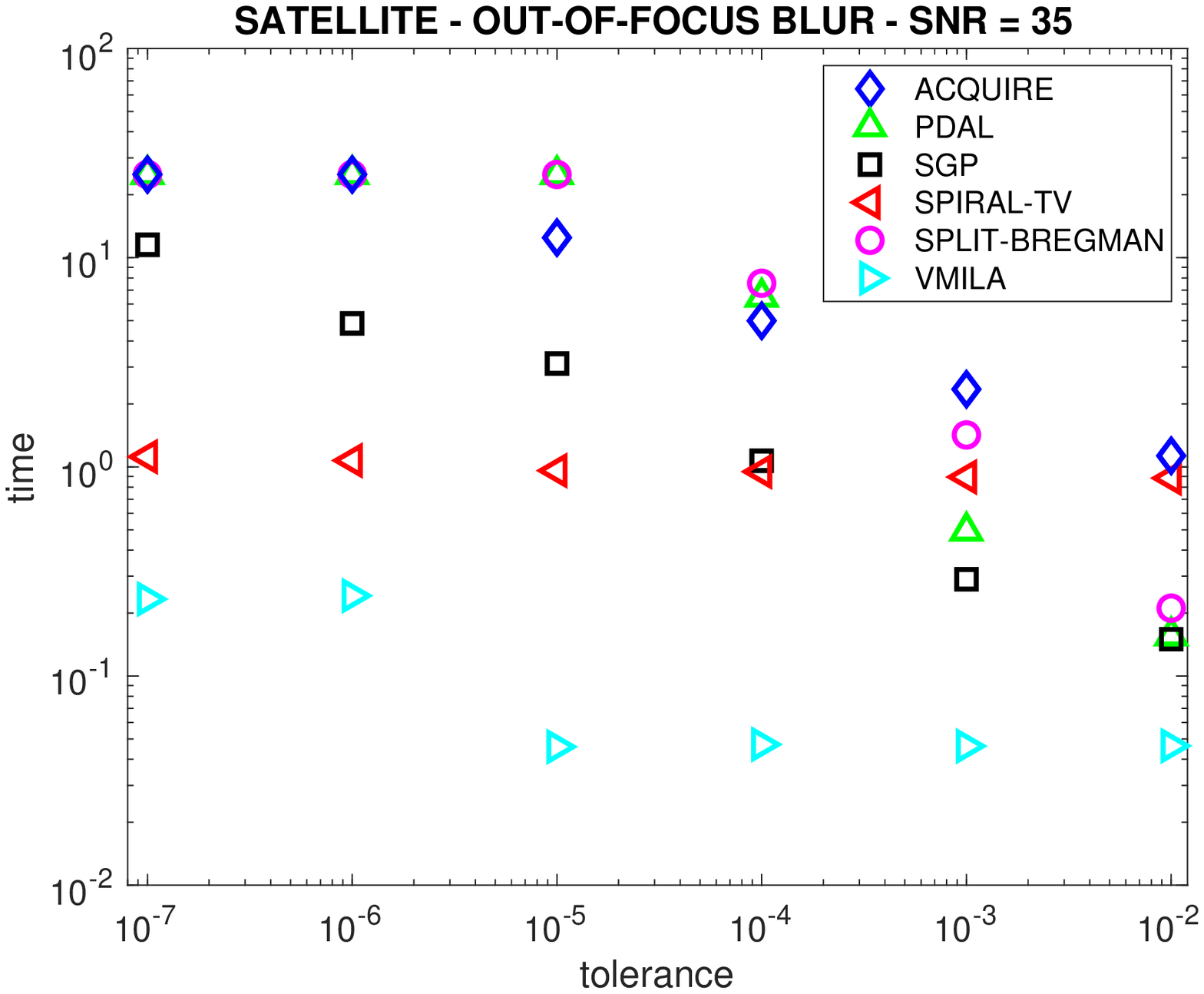}  \\[-1mm]
			\includegraphics[width=.44\textwidth]{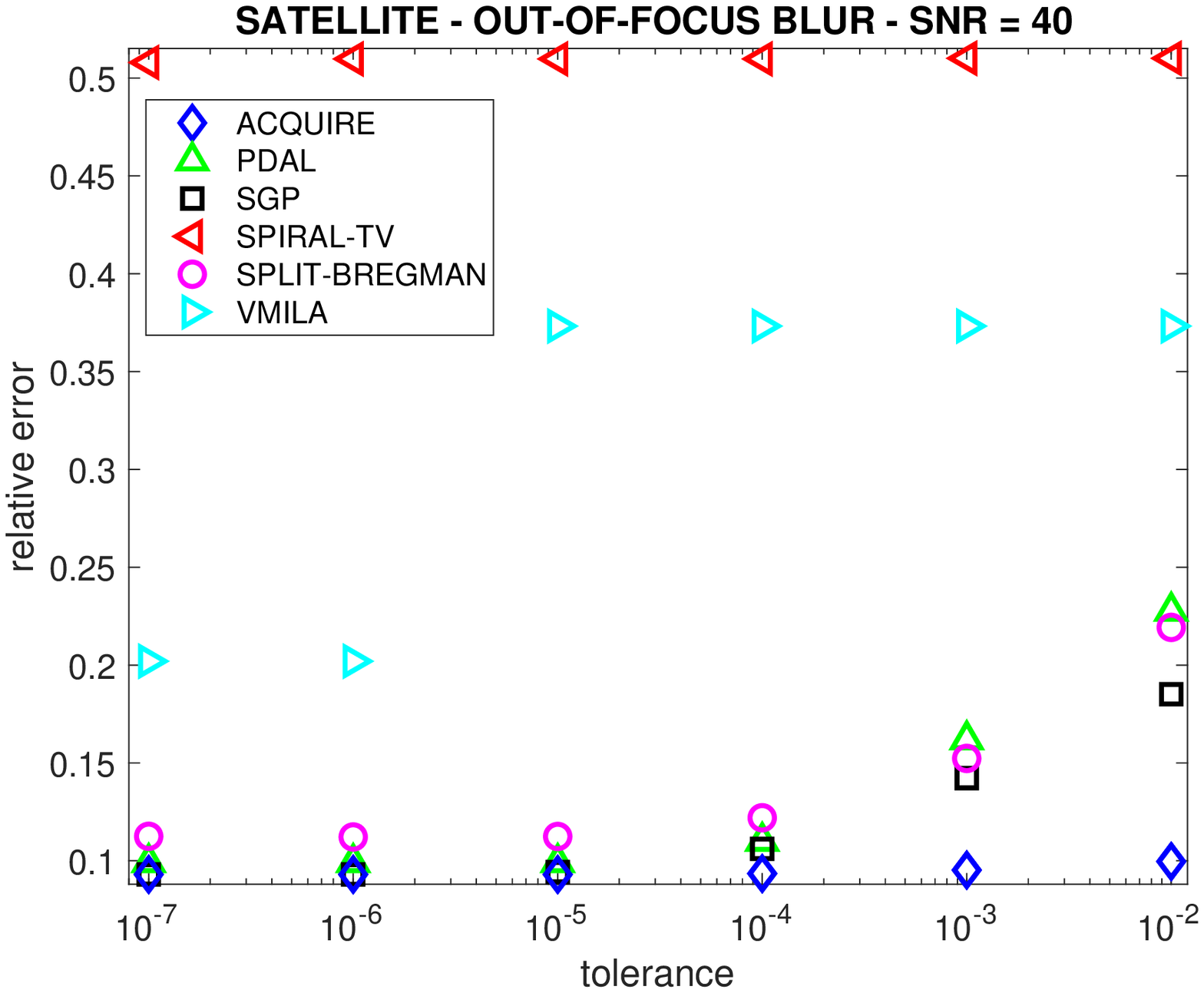}    &
			\includegraphics[width=.44\textwidth]{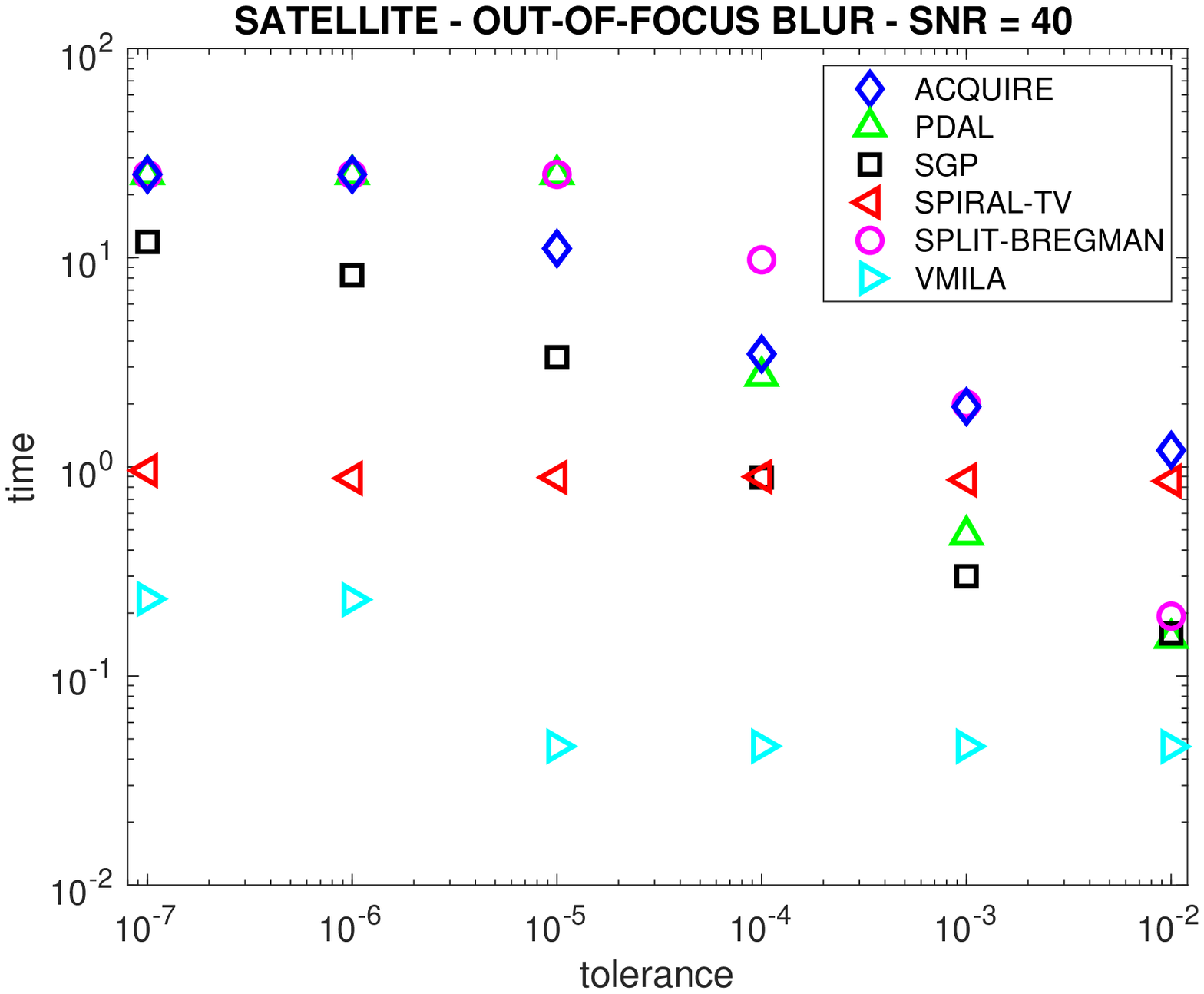}  \\[-1mm]
		\end{tabular}
		\vspace*{-2mm}
		\caption{Test set T2, out-of-focus blur, SNR $= 35,40$: relative error (left) and execution time (right) versus tolerance,
		for all the methods.\label{fig:compar_of}}	
	\end{center}
\end{figure}

\clearpage

%%%%%%%%%%%%%%%%%%%%%%%%%%%%%%%%%%%%%%%%%%%%
% TABELLE

\begin{table}[h!]
	\vspace*{1cm}
	{\small
		\begin{center}
			\begin{tabular}{|l|c|c|c|c|}
				\hline
				% after \\: \hline or \cline{col1-col2} \cline{col3-col4} ...
				Problem       & $\sigma$ & SNR& \multicolumn{2}{|c|}{$\lambda$} \\ \cline{4-5}
				                    &                 &         & $TV$     &  $TV_\mu$ \\
				\hline
				cameraman &1.4            & 35     & 1.55e$-$2  & 1.55e$-$2 \\
				                    &                 & 40     & 5.00e$-$3  & 5.00e$-$3 \\
				\hline
				micro           & 2.0           & 35     & 4.50e$-$3   & 4.50e$-$3 \\
				                    &                 & 40     & 1.00e$-$3   & 1.00e$-$3 \\
				\hline
				phantom      & 2.0           & 35     & 6.00e$-$3   & 6.00e$-$3 \\
				                    &                 & 40     & 4.00e$-$3   & 4.00e$-$3 \\
				\hline
				satellite        & 2.0           & 35     & 7.00e$-$4   & 9.00e$-$4 \\
				                    &                 & 40     & 1.50e$-$4   & 9.00e$-$5 \\
				\hline
			\end{tabular}
			\vspace*{3mm}
			\caption{Details of test set T1.\label{tab:test_pbs}}
	     \end{center}
	}
\end{table}

\begin{table}[h!]
	\vspace*{1cm}
	{\small
		\begin{center}
			\begin{tabular}{|l|c|c|c|c|}
				\hline
				% after \\: \hline or \cline{col1-col2} \cline{col3-col4} ...
				\multicolumn{5}{|c|}{motion blur} \\
				\hline
				Problem       & (\texttt{len}, $\varphi$) & SNR & \multicolumn{2}{|c|}{$\lambda$} \\ 
				\cline{4-5}
				                     &                &          & $TV$          &  $TV_\mu$ \\
				\hline
				cameraman   &(11,45)    & 35     & 1.50e$-$2   & 0.75e$-$2 \\
				                     &                 & 40     & 2.50e$-$3  & 1.75e$-$3 \\
				\hline
				satellite        & (11,45)     & 35     & 2.00e$-$3   & 1.50e$-$3 \\
				                    &                 & 40     & 3.50e$-$4   &  2.25e$-$4 \\
				\hline
				\multicolumn{5}{|c|}{out-of-focus blur} \\
				\hline
				Problem       & \texttt{rad} & SNR & \multicolumn{2}{|c|}{$\lambda$} \\ 
				\cline{4-5}
				                     &                &           & $TV$     &  $TV_\mu$ \\
				\hline
				cameraman &  4             & 35     & 1.50e$-$2  & 1.00e$-$2 \\
				                    &                 & 40     & 1.40e$-$3  & 1.20e$-$3 \\
				\hline
				satellite        &  4             & 35     & 1.75e$-$3   & 0.50e$-$3 \\
				                    &                 & 40     & 2.75e$-$4   & 1.90e$-$4 \\
				\hline                                
			\end{tabular}
			\vspace*{3mm}
			\caption{Details of test set T2.\label{tab:test_mot_of}}
	     \end{center}
	}
\end{table}

\clearpage

\begin{table}[h!]
	{\small
		\begin{center}
			\begin{tabular}{|l|c|c|c|c|c|}
				\hline
				\multicolumn{6}{|c|}{Test set T1, SNR $= 35$} \\ \hline
				Method            & Min rel err  &   MSSIM &  Iters   &  Time    &  Tol    \\  \hline
				\multicolumn{6}{|c|}{cameraman}                                                         \\  \hline
				ACQUIRE       & 9.88e$-$2 & 8.01e$-$1 &   11 & 1.12e$+$0 & 1.00e$-$3 \\
 				PDAL          & 9.95e$-$2 & 8.03e$-$1 & 2743 & 2.50e$+$1 & 1.00e$-$5 \\
 				SGP           & 9.86e$-$2 & 8.01e$-$1 &   67 & 1.10e$+$0 & 1.00e$-$4 \\
 				SPIRAL-TV     & 1.01e$-$1 & 7.99e$-$1 &   62 & 4.92e$+$0 & 1.00e$-$4 \\
 				SPLIT-BREGMAN & 1.02e$-$1 & 8.04e$-$1 &  116 & 1.55e$+$0 & 1.00e$-$4 \\
				VMILA         & 9.83e$-$2 & 8.00e$-$1 &   18 & 2.69e$-$1 & 1.00e$-$3 \\ \hline
				\multicolumn{6}{|c|}{micro}                                                                     \\ \hline
				ACQUIRE       & 5.33e$-$2 & 9.72e$-$1 &   50 & 1.47e$+$0 & 1.00e$-$4 \\
 				PDAL          & 5.40e$-$2 & 9.72e$-$1 & 9974 & 2.50e$+$1 & 1.00e$-$5 \\
 				SGP           & 5.37e$-$2 & 9.72e$-$1 &  361 & 2.83e$+$0 & 1.00e$-$6 \\
 				SPIRAL-TV     & 6.20e$-$2 & 9.72e$-$1 & 1081 & 2.50e$+$1 & 1.00e$-$7 \\
 				SPLIT-BREGMAN & 5.89e$-$2 & 9.77e$-$1 & 1107 & 3.29e$+$0 & 1.00e$-$5 \\
 				VMILA         & 5.43e$-$2 & 9.76e$-$1 &  501 & 4.61e$+$0 & 1.00e$-$6 \\ \hline
				\multicolumn{6}{|c|}{phantom}                                                                  \\ \hline
				ACQUIRE       & 1.41e$-$1 & 9.75e$-$1 &  220 & 2.50e$+$1 & 1.00e$-$6 \\
 				PDAL          & 1.40e$-$1 & 9.73e$-$1 & 2754 & 2.50e$+$1 & 1.00e$-$6 \\
 				SGP           & 1.41e$-$1 & 9.75e$-$1 &  769 & 1.68e$+$1 & 1.00e$-$7 \\
 				SPIRAL-TV     & 2.82e$-$1 & 9.22e$-$1 &  106 & 2.54e$+$0 & 1.00e$-$7 \\
 				SPLIT-BREGMAN & 1.67e$-$1 & 9.73e$-$1 & 2006 & 2.50e$+$1 & 1.00e$-$6 \\
 				VMILA         & 1.39e$-$1 & 9.80e$-$1 &  483 & 8.20e$+$0 & 1.00e$-$7 \\ \hline
				\multicolumn{6}{|c|}{satellite}                                                                    \\ \hline
				 ACQUIRE       & 1.63e$-$1 & 9.62e$-$1 &   20 & 1.89e$+$0 & 1.00e$-$3 \\
 				PDAL          & 1.68e$-$1 & 9.61e$-$1 &  275 & 2.52e$+$0 & 1.00e$-$4 \\
 				SGP           & 1.65e$-$1 & 9.61e$-$1 &   84 & 1.36e$+$0 & 1.00e$-$4 \\
 				SPIRAL-TV     & 2.46e$-$1 & 9.11e$-$1 &   51 & 8.49e$-$1 & 1.00e$-$2 \\
 				SPLIT-BREGMAN & 1.86e$-$1 & 9.45e$-$1 & 1985 & 2.50e$+$1 & 1.00e$-$5 \\
 				VMILA         & 2.04e$-$1 & 9.40e$-$1 &    9 & 7.73e$-$2 & 1.00e$-$2 \\  \hline
			\end{tabular}
			\vspace*{3mm}
			\caption{Test set T1, SNR $= 35$: minimum relative error achieved by each method and corresponding MSSIM
				value, number of iterations, execution time and tolerance.\label{tab:compar35_minerr}}
	   \end{center}
	}
\end{table}

\clearpage

\begin{table}[h!]
	{\small
		\begin{center}
			\begin{tabular}{|l|c|c|c|c|c|}
				\hline
				\multicolumn{6}{|c|}{Test set T1, SNR $= 40$} \\ \hline
				Method            & Min rel err  &   MSSIM &  Iters   &  Time    &  Tol    \\  \hline
				\multicolumn{6}{|c|}{cameraman}                                                         \\  \hline
				ACQUIRE       & 8.73e$-$2 & 8.42e$-$1 &    8 & 8.23e$-$1 & 1.00e$-$3 \\
 				PDAL          & 8.88e$-$2 & 8.22e$-$1 & 2733 & 2.50e$+$1 & 1.00e$-$5 \\
 				SGP           & 8.70e$-$2 & 8.42e$-$1 &   53 & 9.10e$-$1 & 1.00e$-$4 \\
 				SPIRAL-TV     & 8.97e$-$2 & 8.36e$-$1 &  280 & 2.50e$+$1 & 1.00e$-$6 \\
 				SPLIT-BREGMAN & 9.38e$-$2 & 8.41e$-$1 & 1962 & 2.50e$+$1 & 1.00e$-$7 \\
 				VMILA         & 8.72e$-$2 & 8.42e$-$1 &   58 & 9.39e$-$1 & 1.00e$-$4 \\  \hline
				\multicolumn{6}{|c|}{micro}                                                                    \\  \hline
				ACQUIRE       & 4.31e$-$2 & 9.82e$-$1 &  218 & 6.46e$+$0 & 1.00e$-$5 \\
 				PDAL          & 4.62e$-$2 & 9.64e$-$1 & 9543 & 2.50e$+$1 & 1.00e$-$7 \\
 				SGP           & 4.31e$-$2 & 9.81e$-$1 &  700 & 3.55e$+$0 & 1.00e$-$7 \\
 				SPIRAL-TV     & 5.02e$-$2 & 9.83e$-$1 & 1480 & 2.50e$+$1 & 1.00e$-$6 \\
 				SPLIT-BREGMAN & 5.26e$-$2 & 9.85e$-$1 & 7766 & 2.50e$+$1 & 1.00e$-$7 \\
 				VMILA         & 4.32e$-$2 & 9.85e$-$1 & 1223 & 1.04e$+$1 & 1.00e$-$7 \\ \hline
				\multicolumn{6}{|c|}{phantom}                                                                 \\  \hline
				ACQUIRE       & 1.29e$-$1 & 9.85e$-$1 &  217 & 2.50e$+$1 & 1.00e$-$6 \\
 				PDAL          & 1.28e$-$1 & 9.79e$-$1 & 2650 & 2.50e$+$1 & 1.00e$-$5 \\
 				SGP           & 1.29e$-$1 & 9.85e$-$1 &  369 & 7.54e$+$0 & 1.00e$-$6 \\
 				SPIRAL-TV     & 2.97e$-$1 & 9.10e$-$1 &   51 & 1.21e$+$0 & 1.00e$-$2 \\
 				SPLIT-BREGMAN & 1.50e$-$1 & 9.83e$-$1 & 1936 & 2.50e$+$1 & 1.00e$-$7 \\
 				VMILA         & 2.28e$-$1 & 9.51e$-$1 &   16 & 2.18e$-$1 & 1.00e$-$3 \\ \hline
				\multicolumn{6}{|c|}{satellite}                                                                    \\  \hline
				ACQUIRE       & 1.48e$-$1 & 9.70e$-$1 &   28 & 2.74e$+$0 & 1.00e$-$3 \\
 				PDAL          & 1.50e$-$1 & 9.69e$-$1 & 2417 & 2.26e$+$1 & 1.00e$-$5 \\
 				SGP           & 1.48e$-$1 & 9.70e$-$1 &  216 & 3.73e$+$0 & 1.00e$-$5 \\
 				SPIRAL-TV     & 2.47e$-$1 & 9.11e$-$1 &   85 & 1.45e$+$0 & 1.00e$-$7 \\
 				SPLIT-BREGMAN & 1.72e$-$1 & 9.53e$-$1 & 1935 & 2.50e$+$1 & 1.00e$-$5 \\
 				VMILA         & 2.08e$-$1 & 9.37e$-$1 &   10 & 1.35e$-$1 & 1.00e$-$3 \\ \hline
			\end{tabular}
			\vspace*{3mm}
			\caption{Test set T1, SNR $= 40$: minimum relative error achieved by each method and corresponding MSSIM
				value, number of iterations, execution time and tolerance.\label{tab:compar40_minerr}}
	    \end{center}
	}
\end{table}

\begin{table}[h!]
	{\small
		\begin{center}
			\begin{tabular}{|l|c|c|c|c|c|}
				\hline
				\multicolumn{6}{|c|}{Test set T2, motion blur} \\ \hline
				Method            & Min rel err  &   MSSIM &  Iters   &  Time    &  Tol    \\  \hline
				\multicolumn{6}{|c|}{cameraman, SNR $= 35$}                                  \\  \hline
				ACQUIRE       & 1.12e$-$1 & 6.85e$-$1 &   12 & 1.21e$+$0 & 1.00e$-$3 \\ 
 				PDAL          & 1.13e$-$1 & 7.51e$-$1 & 2656 & 2.50e$+$1 & 1.00e$-$6 \\ 
 				SGP           & 1.13e$-$1 & 6.77e$-$1 &  110 & 2.22e$+$0 & 1.00e$-$5 \\ 
 				SPIRAL-TV     & 1.14e$-$1 & 7.54e$-$1 &  242 & 2.51e$+$1 & 1.00e$-$5 \\ 
 				SPLIT-BREGMAN & 1.19e$-$1 & 7.50e$-$1 &  164 & 2.15e$+$0 & 1.00e$-$4 \\ 
 				VMILA         & 1.12e$-$1 & 7.51e$-$1 &   73 & 1.27e$+$0 & 1.00e$-$4 \\ 
 				\hline
				\multicolumn{6}{|c|}{cameraman, SNR $= 40$}                                 \\ \hline
				ACQUIRE       & 8.13e$-$2 & 8.09e$-$1 &   31 & 3.21e$+$0 & 1.00e$-$4 \\ 
 				PDAL          & 8.42e$-$2 & 8.04e$-$1 & 2698 & 2.50e$+$1 & 1.00e$-$5 \\ 
 				SGP           & 8.14e$-$2 & 8.09e$-$1 &  219 & 3.83e$+$0 & 1.00e$-$6 \\ 
 				SPIRAL-TV     & 8.56e$-$2 & 8.25e$-$1 &  377 & 2.52e$+$1 & 1.00e$-$7 \\ 
 				SPLIT-BREGMAN & 9.85e$-$2 & 8.06e$-$1 &  880 & 1.16e$+$1 & 1.00e$-$5 \\ 
 				VMILA         & 8.28e$-$2 & 8.27e$-$1 &  139 & 2.14e$+$0 & 1.00e$-$5 \\ \hline
				\multicolumn{6}{|c|}{satellite, SNR $= 35$}                                  \\  \hline
				ACQUIRE       & 1.12e$-$1 & 9.81e$-$1 &    9 & 8.76e$-$1 & 1.00e$-$2 \\ 
 				PDAL          & 1.14e$-$1 & 9.81e$-$1 & 2680 & 2.50e$+$1 & 1.00e$-$5 \\ 
 				SGP           & 1.12e$-$1 & 9.81e$-$1 &   64 & 1.06e$+$0 & 1.00e$-$4 \\ 
 				SPIRAL-TV     & 7.92e$-$1 & 8.78e$-$1 &   51 & 9.09e$-$1 & 1.00e$-$2 \\ 
 				SPLIT-BREGMAN & 1.20e$-$1 & 9.77e$-$1 & 2099 & 2.50e$+$1 & 1.00e$-$7 \\ 
 				VMILA         & 1.69e$-$1 & 9.60e$-$1 &   23 & 2.52e$-$1 & 1.00e$-$6 \\ \hline
				\multicolumn{6}{|c|}{satellite, SNR $= 40$}                                  \\  \hline
				ACQUIRE       & 7.01e$-$2 & 9.93e$-$1 &  114 & 1.09e$+$1 & 1.00e$-$5 \\ 
 				PDAL          & 7.52e$-$2 & 9.91e$-$1 & 2680 & 2.50e$+$1 & 1.00e$-$7 \\ 
 				SGP           & 7.00e$-$2 & 9.93e$-$1 &  356 & 5.72e$+$0 & 1.00e$-$6 \\ 
 				SPIRAL-TV     & 6.62e$-$1 & 8.89e$-$1 &   61 & 1.03e$+$0 & 1.00e$-$7 \\ 
 				SPLIT-BREGMAN & 7.94e$-$2 & 9.90e$-$1 & 2114 & 2.50e$+$1 & 1.00e$-$6 \\ 
 				VMILA         & 1.74e$-$1 & 9.59e$-$1 &   22 & 2.46e$-$1 & 1.00e$-$6 \\ \hline
		         \end{tabular}
			\vspace*{3mm}
			\caption{Test set T2, motion blur: minimum relative error achieved by each method and corresponding MSSIM
				value, number of iterations, execution time and tolerance.\label{tab:compar_mot_minerr}}
	   \end{center}
	}
\end{table}

\begin{table}[h!]
	{\small
		\begin{center}
			\begin{tabular}{|l|c|c|c|c|c|}
				\hline
				\multicolumn{6}{|c|}{Test set T2, out-of-focus blur} \\ \hline
				Method            & Min rel err  &   MSSIM &  Iters   &  Time    &  Tol    \\  \hline
				\multicolumn{6}{|c|}{cameraman, SNR $= 35$}                                  \\  \hline
				ACQUIRE       & 1.18e$-$1 & 7.41e$-$1 &   12 & 1.11e$+$0 & 1.00e$-$3 \\ 
 				PDAL          & 1.20e$-$1 & 7.50e$-$1 & 2624 & 2.50e$+$1 & 1.00e$-$5 \\ 
 				SGP           & 1.18e$-$1 & 7.39e$-$1 &  122 & 1.96e$+$0 & 1.00e$-$5 \\ 
 				SPIRAL-TV     & 1.22e$-$1 & 7.44e$-$1 &  237 & 2.51e$+$1 & 1.00e$-$5 \\ 
 				SPLIT-BREGMAN & 1.25e$-$1 & 7.49e$-$1 &  144 & 1.72e$+$0 & 1.00e$-$4 \\ 
 				VMILA         & 1.19e$-$1 & 7.50e$-$1 &   89 & 1.36e$+$0 & 1.00e$-$4 \\ \hline	
				\multicolumn{6}{|c|}{cameraman, SNR $= 40$}                                   \\  \hline
				ACQUIRE       & 9.20e$-$2 & 7.86e$-$1 &   14 & 1.33e$+$0 & 1.00e$-$3 \\ 
 				PDAL          & 9.66e$-$2 & 7.12e$-$1 & 2658 & 2.50e$+$1 & 1.00e$-$5 \\ 
 				SGP           & 9.22e$-$2 & 7.81e$-$1 &  160 & 2.64e$+$0 & 1.00e$-$5 \\ 
 				SPIRAL-TV     & 9.54e$-$2 & 7.92e$-$1 &  386 & 2.50e$+$1 & 1.00e$-$6 \\ 
 				SPLIT-BREGMAN & 1.10e$-$1 & 7.73e$-$1 & 1284 & 1.52e$+$1 & 1.00e$-$5 \\ 
 				VMILA         & 9.20e$-$2 & 7.93e$-$1 &  148 & 2.33e$+$0 & 1.00e$-$5 \\ \hline
				\multicolumn{6}{|c|}{satellite, SNR $= 35$}                                  \\  \hline
				ACQUIRE       & 1.42e$-$1 & 9.72e$-$1 &   11 & 1.13e$+$0 & 1.00e$-$2 \\ 
 				PDAL          & 1.43e$-$1 & 9.70e$-$1 & 2700 & 2.50e$+$1 & 1.00e$-$5 \\ 
 				SGP           & 1.43e$-$1 & 9.72e$-$1 &   70 & 1.07e$+$0 & 1.00e$-$4 \\ 
 				SPIRAL-TV     & 4.39e$-$1 & 8.95e$-$1 &   63 & 1.07e$+$0 & 1.00e$-$6 \\ 
 				SPLIT-BREGMAN & 1.49e$-$1 & 9.64e$-$1 & 2101 & 2.50e$+$1 & 1.00e$-$5 \\ 
 				VMILA         & 2.01e$-$1 & 9.42e$-$1 &   20 & 2.42e$-$1 & 1.00e$-$6 \\ \hline
				\multicolumn{6}{|c|}{satellite, SNR $= 40$}                                  \\  \hline
				ACQUIRE       & 9.30e$-$2 & 9.87e$-$1 &  116 & 1.11e$+$1 & 1.00e$-$5 \\ 
 				PDAL          & 9.91e$-$2 & 9.85e$-$1 & 2707 & 2.50e$+$1 & 1.00e$-$5 \\ 
 				SGP           & 9.30e$-$2 & 9.87e$-$1 &  541 & 8.25e$+$0 & 1.00e$-$6 \\ 
 				SPIRAL-TV     & 5.08e$-$1 & 8.87e$-$1 &   57 & 9.61e$-$1 & 1.00e$-$7 \\ 
 				SPLIT-BREGMAN & 1.12e$-$1 & 9.79e$-$1 & 2096 & 2.50e$+$1 & 1.00e$-$6 \\ 
 				VMILA         & 2.02e$-$1 & 9.42e$-$1 &   20 & 2.32e$-$1 & 1.00e$-$6 \\ \hline		
                         \end{tabular}
			\vspace*{3mm}
			\caption{Test set T2, out-of-focus blur: minimum relative error achieved by each method and corresponding MSSIM
				value, number of iterations, execution time and tolerance.\label{tab:compar_of_minerr}}
	    \end{center}
	}
\end{table}


\begin{thebibliography}{99}
	
	
	\bibitem{BarnardBilheuxToops2018}
	{\sc R.-C. Barnard, H.~Bilheux, and T.~e.~a. Toops}, {\em Total variation-based
		neutron computed tomography}, Review of Scientific Instruments, 89 (2018), 053704.
	
	\bibitem{BarzilaiBorwein1988}
	{\sc J.~Barzilai and J.~M. Borwein}, {\em Two-point step size gradient
		methods}, IMA Journal of Numerical Analysis, 8 (1988), pp.~141--148.
	
	\bibitem{BeckTeboulle2009}
	{\sc A.~Beck and M.~Teboulle}, {\em Fast gradient-based algorithms for
		constrained total variation image denoising and deblurring problems}, IEEE
	Transactions on Image Processing, 18 (2009), pp.~2419--2434.
	
	\bibitem{BerteroBoccacciDesideraVicidomini2009}
	{\sc M.~Bertero, P.~Boccacci, G.~Desider\`a, and G.~Vicidomini}, {\em Image
		deblurring with {Poisson} data: from cells to galaxies}, Inverse Problems, 25
	(2009), 123006.
	
	\bibitem{BerteroBoccacciTalentiZanellaZanni2010}
	{\sc M.~Bertero, P.~Boccacci, G.~Talenti, R.~Zanella, and L.~Zanni}, {\em A
		discrepancy principle for poisson data}, Inverse Problems, 26 (2010), 105004.
	
	\bibitem{BerteroLanteriZanni2008}
	{\sc M.~Bertero, H.~Lant\'{e}ri, and L.~Zanni},
	{\em Iterative image reconstruction: a point of view},
	in ``Mathematical Methods in Biomedical Imaging and Intensity-Modulated Radiation Therapy (IMRT)'',
	pp.~37--63 (Y.~Censor, M.~Jiang and A.~K. Louis eds.), CRM Series, Edizioni della Normale, 2008.
	
	\bibitem{Bertsekas1999}
	{\sc D.~P. Bertsekas}, {\em Nonlinear Programming}, Athena Scientific, Belmont,
	MA, USA, 1999.
	
	\bibitem{BirginKrejicMartinez2003}
	{\sc E.~G. Birgin, N.~Kreji\'c, and J.~M. Mart\'inez}, {\em Globally convergent
		inexact quasi-{N}ewton methods for solving nonlinear systems}, Numerical
	Algorithms, 32 (2003), pp.~249--260.
	
	\bibitem{BonettiniLorisPortaPrato2016}
	{\sc S.~Bonettini, I.~Loris, F.~Porta, and M.~Prato}, {\em Variable metric
		inexact line-search-based methods for nonsmooth optimization}, {SIAM} Journal
	on Optimization, 26 (2016), pp.~891--921.
	
	\bibitem{BonettiniRuggiero2011}
	{\sc S.~Bonettini and V.~Ruggiero}, {\em An alternating extragradient method
		for total variation-based image restoration from {P}oisson data}, Inverse
	Problems, 27 (2011), 095001.
	
	\bibitem{BonettiniZanellaZanni2009}
	{\sc S.~Bonettini, R.~Zanella, and L.~Zanni}, {\em A scaled gradient projection
		method for constrained image deblurring}, Inverse Problems, 25 (2009), 015002.
	
	\bibitem{BrediesHoller2014}
	{\sc K.~Bredies and M.~Holler}, {\em Regularization of linear inverse problems
		with total generalized variation}, Journal of Inverse and Ill-posed Problems,
	22 (2014), pp.~871--913.
	
	\bibitem{BrediesKunischPock2010}
	{\sc K.~Bredies, K.~Kunisch, and T.~Pock}, {\em Total generalized variation},
	SIAM Journal on Imaging Sciences, 3 (2010), pp.~492--526.
	
	\bibitem{CalamaiMore1987}
	{\sc P.~H. Calamai and J.~J. Mor{\'e}}, {\em Projected gradient methods for
		linearly constrained problems}, Mathematical Programming, 39 (1987),
	pp.~93--116.
	
	\bibitem{CalamaiMore1987a}
	{\sc P.~H. Calamai and J.~J. Mor{\'e}}, {\em Quasi-{N}ewton updates with
		bounds}, SIAM Journal on Numerical Analysis, 24 (1987), pp.~1434--1441.
	
	\bibitem{Chambolle2004}
	{\sc A.~Chambolle}, {\em An algorithm for total variation minimization and
		applications}, Journal of Mathematical Imaging and Vision, 20 (2004),
	pp.~89--97.
	
	\bibitem{ChambollePock2011}
	{\sc A.~Chambolle and T.~Pock}, {\em A first-order primal-dual algorithm for
		convex problems with applications to imaging}, Journal of Mathematical
	Imaging and Vision, 40 (2011), pp.~120--145.
	
	\bibitem{Condat2016}
	{\sc L.~Condat}, {\em Fast projection onto the simplex and the $l_1$ ball},
	Mathematical Programming, Series A, 158 (2016), pp.~575--585.
	
	\bibitem{DaiFletcher2006}
	{\sc Y.-H. Dai and R.~Fletcher}, {\em New algorithms for singly linearly
		constrained quadratic programs subject to lower and upper bounds},
	Mathematical Programming, Series A, 106 (2006), pp.~403--421.
	
	\bibitem{DeAsmundisdiSerafinoHagerToraldoZhang2014}
	{\sc R.~De~Asmundis, D.~di~Serafino, W.~W. Hager, G.~Toraldo, and H.~Zhang},
	{\em An efficient gradient method using the Yuan steplength}, Computational
	Optimization and Applications, 59 (2014), pp.~541--563.
	
	\bibitem{DeAsmundisdiSerafinoLandi2016}
	{\sc R.~De~Asmundis, D.~Di~Serafino, and G.~Landi}, {\em On the regularizing
		behavior of the SDA and SDC gradient methods in the solution of linear
		ill-posed problems}, Journal of Computational and Applied Mathematics, 302
	(2016), pp.~81--93.
	
	\bibitem{DeAsmundisdiSerafinoRiccioToraldo2013}
	{\sc R.~De~Asmundis, D.~di~Serafino, F.~Riccio, and G.~Toraldo}, {\em On
		spectral properties of steepest descent methods}, IMA Journal of Numerical
	Analysis, 33 (2013), pp.~1416--1435.
	
	\bibitem{diSerafinoRuggieroToraldoZanni2018}
	{\sc D.~di~Serafino, V.~Ruggiero, G.~Toraldo, and L.~Zanni}, {\em On the
		steplength selection in gradient methods for unconstrained optimization},
	Applied Mathematics and Computation, 318 (2018), pp.~176--195.
	
	\bibitem{diSerafinoToraldoViolaBarlow2018}
	{\sc D.~di~Serafino, G.~Toraldo, M.~Viola, and J.~Barlow}, {\em A two-phase
		gradient method for quadratic programming problems with a single linear
		constraint and bounds on the variables}, SIAM Journal on
	Optimization,  28 (2018), pp.~ 2809--2838.
	
	\bibitem{FacchineiLamparielloScutari2017}
	{\sc F.~Facchinei, L.~Lampariello, and G.~Scutari}, {\em Feasible methods for
		nonconvex nonsmooth problems with applications in green communications},
	Mathematical Programming, 164 (2017), pp.~55--90.
	
	\bibitem{FigueiredoBioucasDias2010}
	{\sc M.~A.~T. Figueiredo and J.~M. Bioucas-Dias}, {\em Restoration of
		{P}oissonian images using alternating direction optimization}, IEEE
	Transactions on Image Processing, 19 (2010), pp.~3133--3145.
	
	\bibitem{Fletcher2012}
	{\sc R.~Fletcher}, {\em A limited memory steepest descent method}, Mathematical
	Programming, Series A, 135 (2012), pp.~413--436.
	
	\bibitem{FrassoldatiZanniZanghirati2008}
	{\sc G.~Frassoldati, L.~Zanni, and G.~Zanghirati}, {\em New adaptive stepsize
		selections in gradient methods}, Journal of Industrial and Management
	Optimization, 4 (2008), pp.~299--312.
	
	\bibitem{Getreuer2012}
	{\sc P.~Getreuer}, {\em Total variation deconvolution using split {B}regman},
	Image Processing On Line, 2 (2012), pp.~158--174.
	
	\bibitem{GoldsteinOsher2009}
	{\sc T.~Goldstein and S.~Osher}, {\em The split {B}regman method for
		{L}1-regularized problems}, SIAM Journal on Imaging Sciences, 2 (2009),
	pp.~323--343.
	
	\bibitem{GrippoLamparielloLucidi1986}
	{\sc L.~Grippo, F.~Lampariello, and S.~Lucidi}, {\em A nonmonotone line search
		technique for {N}ewton's method}, {SIAM} Journal on Numerical Analysis, 23
	(1986), pp.~707--716.
	
	\bibitem{HarmanyMarciaWillet2012}
	{\sc Z.~T. Harmany, R.~F. Marcia, and R.~M. Willett}, {\em This is
		{SPIRAL--TAP}: {S}parse {P}oisson {I}ntensity {R}econstruction {A}lgorithms
		-- {T}heory and {P}ractice}, IEEE Transactions on Image Processing, 21
	(2012), pp.~1084--1096.
	
	\bibitem{Herman2009}
	{\sc G.~Herman}, {\em Fundamentals of Computerized Tomography}, Springer,
	Berlin, 2009.
	
	\bibitem{JohnssonHuangChan1998}
	{\sc E.~Jonsson, S.-C.~Huang, and T.~Chan}, {\em Total {V}ariation regularization
		in positron emission tomography}, Tech. Report 98--48, UCLA, CAM-Report, 1998.
	
	\bibitem{LiMalgouyresZeng2017}
	{\sc Z.~Li, F.~Malgouyres, and T.~Zeng},
	{\em Regularized non-local Total Variation and application in image restoration}
	Journal of Mathematical Imaging and Vision, 59 (2017), pp.~296--317.
	
	\bibitem{LiuHuangLvWang2017}
	{\sc J.~Liu, T.-Z. Huang, X.-G. Lv, and S.~Wang}, {\em High-order total
		variation-based {P}oissonian image deconvolution with spatially adapted
		regularization parameter}, Applied Mathematical Modelling, 45 (2017),
	pp.~516--529.
	
	\bibitem{PiccolominiColiMorottiZanni2017}
	{\sc E.~Loli~Piccolomini, V.~L. Coli, E.~Morotti, and L.~Zanni}, {\em
		Reconstruction of {3D X}-ray {CT} images from reduced sampling by a scaled
		gradient projection algorithm}, Computational Optimization and Applications, 71 (2018),
		pp.~171--191. 
	
	\bibitem{MalitskyPock2018}
	{\sc Y.~Malitsky and T.~Pock}, {\em A first-order primal-dual algorithm with
		linesearch}, SIAM Journal on Optimization, 28 (2018), pp.~411--432.
	
	\bibitem{MoreToraldo1991}
	{\sc J.~J. Mor{\'e} and G.~Toraldo}, {\em On the solution of large quadratic
		programming problems with bound constraints}, SIAM Journal on Optimization, 1
	(1991), pp.~93--113.
	
	\bibitem{MotaMatelaOliveraAlmeida2016}
	{\sc A.-M. Mota, N.~Matela, N.~Olivera, and P.~Almeida}, {\em 3D total
		variation minimization filter for breast tomosynthesis imaging}, Proc. of
	13th International Workshop IWDM,  (2016), pp.~484--492.
	
	\bibitem{NagyPalmerPerrone2004}
	{\sc J.~G. Nagy, K.~Palmer, and L.~Perrone}, {\em Iterative methods for image
		deblurring: a {M}atlab object-oriented approach}, Numerical Algorithms, 36
	(2004), pp.~73--93.
	
	\bibitem{PaninZengGullberg1999}
	{\sc V.~Y. Panin, G.~L. Zeng, and G.~T. Gullberg}, {\em Total variation
		regulated {EM} algorithm}, in 1998 IEEE Nuclear Science Symposium Conference
	Record, 1998, p.~6359010.
	
	\bibitem{PapafitsorosSchonlieb2014}
	{\sc K.~Papafitsoros and C.-B. Sch\"{o}nlieb}, {\em A combined first and second
		order variational approach for image reconstruction}, Journal of Mathematical
	Imaging and Vision, 48 (2014), pp.~308--338.
	
	\bibitem{Pawley1996}
	{\sc J.~B. Pawley}, {\em Handbook of Biological Confocal Microscopy}, Plenum
	Press, New York, 2nd~ed., 1996.
	
	\bibitem{RodriguezWohlberg2009}
	{\sc P.~Rodr\'{i}guez and B.~Wohlberg}, {\em Efficient minimization method for
		a generalized total variation functional}, IEEE Transactions on Image
	Processing, 18 (2009), pp.~322--332.
	
	\bibitem{RudinOsherFatemi1992}
	{\sc L.~I. Rudin, S.~Osher, and E.~Fatemi}, {\em Nonlinear total variation
		based noise removal algorithms}, Physica D, 60 (1992), pp.~259--268.
	
	\bibitem{SarderNehorai2006}
	{\sc P.~Sarder and A.~Nehorai}, {\em Deconvolution method for 3-{D}
		fluorescence microscopy images}, IEEE Signal Processing Letters, 23 (2006),
	pp.~32--45.
	
	\bibitem{SawatzkyBruneWuebbelingKoestersSchaefersBurger2008}
	{\sc A.~Sawatzky, C.~Brune, F.~W\"ubbeling, T.~K\"osters, K.~Sch\"afers, and
		M.~Burger}, {\em Accurate {EM-TV} algorithm in {PET} with low {SNR}}, in 2008
	IEEE Nuclear Science Symposium Conference Record, 2008,
	\url{https://doi.org/10.1109/NSSMIC.2008.4774392}.
	
	\bibitem{SetzerSteidlTeuber2010}
	{\sc S.~Setzer, G.~Steidl, and T.~Teuber}, {\em Deblurring {P}oissonian images
		by split {B}regman techniques}, Journal of Visual Communication and Image
	Representation, 21 (2010), pp.~193--199.
	
	\bibitem{SheppLogan1974}
	{\sc L.~A. Shepp and B.~Logan}, {\em The {F}ourier reconstruction of a head
		section}, IEEE Transactions on Nuclear Science, 21 (1974), pp.~21--43.
	
	\bibitem{SheppVardi1982}
	{\sc L.~A. Shepp and Y.~Vardi}, {\em Maximum likelihood reconstruction for
		emission tomography}, IEEE Transactions on Medical Imaging, 1 (1982),
	pp.~113--122.
	
	\bibitem{StarckMurtagh2006}
	{\sc J.-L. Starck and F.~Murtagh}, {\em Astronomical Image and Data Analysis},
	Springer, 2006.
	
	\bibitem{VogelOman1996}
	{\sc C.~R. Vogel and M.~E. Oman}, {\em Iterative methods for total variation
		denoising}, SIAM Journal on Scientific Computing, 17 (1996), pp.~227--238.
	
	\bibitem{WangBovikSheikhSimoncelli2004}
	{\sc Z.~Wang, A.~C. Bovik, H.~R. Sheikh, and E.~P. Simoncelli}, {\em Image
		quality assessment: from error visibility to structural similarity}, IEEE
	Transactions on Image Processing, 13 (2004), pp.~600--612.
	
	\bibitem{WeissBlancFeraudAubert2009}
	{\sc P.~Weiss, L.~Blanc-F\'eraud, and G.~Aubert}, {\em Efficient schemes for
		total variation minimization under constraints in image processing}, SIAM
	Journal on Scientific Computing, 31 (2009), pp.~2047--2080.
	
	\bibitem{WenChanZeng2016}
	{\sc Y.-W. Wen, R.~H. Chan, and T.-Y. Zeng}, {\em Primal-dual algorithms for
		total variation based image restoration under poisson noise}, Science China
	Mathematics, 59 (2016), pp.~141--160.
	
	\bibitem{WillettNowak2003}
	{\sc R.~M. Willett and R.~D. Nowak}, {\em Platelets: a multiscale approach for
		recovering edges and surfaces in photon limited medical imaging}, IEEE
	Transactions on Medical Imaging, 22 (2003), pp.~332--350.
	
	\bibitem{ZanellaBoccacciZanniBertero2009}
	{\sc R.~Zanella, P.~Boccacci, L.~Zanni, and M.~Bertero}, {\em Efficient
		gradient projection methods for edge-preserving removal of {Poisson} noise},
	Inverse Problems, 25 (2009), 045010.
	
	\bibitem{Zarantonello1971}
	{\sc E.~H. Zarantonello}, {\em Projections on convex sets in {H}ilbert space
		and spectral theory}, in Contributions to Nonlinear Fanctional Analysis,
	E.~H. Zarantonello, ed., Academic Press, New York, NY, USA, 1971.
	
	\bibitem{ZhangHuNagy2018}
	{\sc J.~Zhang, Y.~Hu, and J.~Nagy}, {\em A scaled gradient method for digital
		tomographic image reconstruction}, Inverse Problems and Imaging, 12 (2018),
	pp.~239--259.

	
\end{thebibliography}
\end{document}